\documentclass[10pt]{amsart}

\usepackage{amsmath}
\usepackage{amsfonts}
\usepackage{amssymb}
\usepackage{amsthm}
\usepackage{upgreek}
\usepackage{bigdelim}
\usepackage{multirow}
\usepackage{hhline}
\usepackage{enumerate}
\usepackage{comment}
\usepackage{amsthm,verbatim}
\usepackage{latexsym}
\usepackage{fancyhdr}

\newcommand{\Z}{\mathbb{Z}}
\newcommand{\N}{\mathbb{N}}
\newcommand{\R}{\mathbb{R}}

\newcommand{\ve}{\varepsilon}
\renewcommand{\mod}[1]{{\ifmmode\text{\rm\ (mod~$#1$)}\else\discretionary{}{}{\hbox{ }}\rm(mod~$#1$)\fi}}

\usepackage{pgf}
\usepackage{tikz}
\usetikzlibrary{automata,matrix,arrows,decorations.pathmorphing,decorations.pathreplacing}



\renewcommand{\Lambda}{\Uplambda}

\newtheorem{theorem}{Theorem}[section]
\newtheorem{lemma}[theorem]{Lemma}

\newtheorem{corollary}[theorem]{Corollary}
\newtheorem{proposition}[theorem]{Proposition}



\begin{document}






\title[Infinite Transformations  ] {Strict Doubly Ergodic Infinite Transformations }

\author[Isaac Loh]{Isaac Loh}

\address{ }

\email{isaaccloh@gmail.com}

\email{ }

\author[C. E. Silva]{Cesar E. Silva}

\address{Department of Mathematics and Statistics, Williams College, 
\\ Williamstown, MA 01267} 

\email{csilva@williams.edu}


\begin{abstract}
We give examples of  rank-one transformations that are (weak) doubly ergodic and rigid (so all their cartesian products are conservative), but with non-ergodic $2$-fold cartesian product. We give conditions for rank-one infinite measure-preserving transformations to be (weak) doubly
ergodic and for their $k$-fold cartesian product to be conservative. We also show that a (weak) doubly ergodic nonsingular group action is ergodic with isometric coefficients, and that the latter strictly implies W measurable sensitivity.
 \end{abstract}

\subjclass[2010]{Primary 37A25; Secondary 28D05}

\keywords{Ergodicity, Rational Ergodicity, Weakly Rationally Ergodic, Infinite Measure}

\maketitle 


\section{Introduction}

The weak mixing property for finite measure-preserving transformations, or actions, has several interesting and different characterizations that are all equivalent, see e.g. \cite{BeRo88}. 
In \cite{KaPa63}, Kakutani and Parry were the first to show that for
infinite measure-preserving transformations this is not the case in general. In particular they constructed, for each integer $k$, an infinite measure-preserving Markov shift $T$ such 
that the $k$-fold cartesian product $T^{(k)}= T\times\cdots\times T$ is ergodic
but the $k+1$ product $T^{(k+1)}$ is not conservative, hence not ergodic. 
In \cite{AaLiWe79}, Aaronson, Lin, and Weiss constructed an infinite measure-preserving Markov shift $T$  so that for all ergodic finite measure-preserving transformations $S$ the product $T\times S$ is ergodic but $T\times T$ is not conservative, hence not ergodic. Since that time there have been several works that have studied related examples and counterexamples, that probably can be divided into those  which have studied conditions stronger than ergodicity of the cartesian square, and those which have studied conditions weaker than ergodicity of the cartesian square; see \cite{DaSi09} for a survey of some of the work, and \cite{GlWe} for more recent results on properties weaker than ergodicity of the cartesian square. In this paper we consider a condition that is weaker than ergodicity of the cartesian square. Before stating our results we review some definitions.

We consider standard Borel measure spaces, denoted $(X,\mathcal{S},\mu)$, where $\mu$ is   a nonatomic $\sigma$-finite measure, which we assume  is infinite. We will also consider a probability measure on $(X,\mathcal S)$ which we denote by $m$ (or $m'$).  All the transformations we study are invertible and measure-preserving with respect to $\mu$ or nonsingular with respect to $m$.   A transformation $T$ is {\bf ergodic} if for every  invariant set $A$, $\mu(A)=0$ or  $\mu(X\setminus A)=0$, and   {\bf conservative} if for every measurable set  of positive measure $A$, there exists   $n\in\mathbb N$ such that $\mu(A \cap T^{-n}A)>0$. (We let  $\N$ be the set of strictly positive integers.) If $T$ is invertible and $\mu$ is nonatomic,  then it is conservative when $T$ is ergodic, see e.g. \cite[3.9.1]{Si08}.

A transformation $T$ on $(X,\mu)$ is {\bf weakly mixing} \cite{AaLiWe79} if for all ergodic  finite measure-preserving transformations $S$ on $(Y,m)$ the cartesian product $T\times S$ is ergodic .
A transformation $T$ on $(X,\mu)$ is {\bf doubly ergodic}, henceforth called  {\bf weak doubly ergodic}, or WDE, if
for every pair of measurable sets  of positive measure $A, B$, there exists a positive integer $n$  such that $\mu(A \cap T^{-n}A)>0$ and $\mu(B \cap T^{-n}A)>0$ \cite{BoFiMaSi01}. A transformation $T$ has {\bf ergodic cartesian square} if $T\times T$ is ergodic. It is clear that ergodic cartesian square implies weak doubly ergodic.  
For nonsingular group actions, ergodic cartesian square (i.e., ergodic index at least 2) has also been called doubly ergodic
(see  \cite{GlWe15} and the discussion in Section~\ref{S:EIC} below), and to avoid confusion between the two notions, in this paper we are  using  \lq\lq weak doubly ergodic"  instead of \lq\lq doubly ergodic" as in \cite{BoFiMaSi01}.   

It was shown in \cite{BoFiMaSi01} that, in infinite measure,
weak doubly ergodic does not imply ergodic cartesian square, and that while weak doubly ergodic  implies weak mixing, the converse also does not hold in infinite measure.  Thus weak doubly ergodic lies properly between weak mixing and ergodic cartesian square.

A transformation $T$ has {\bf ergodic index $k$} if $T^{(k)}$ is ergodic but $T^{(k+1)}$ is not
ergodic  \cite{KaPa63}. 
Then in the notation of Kakutani and Parry  \cite{KaPa63},  the property of ergodic cartesian square  is ergodic index at least 2. A transformation has {\bf infinite ergodic index} if all finite cartesian products are ergodic.
Similarly one defines {\bf conservative index $k$} and  {\bf infinite conservative index}.

We say that a transformation $T$ is \textbf{at least} $\boldsymbol{\rho}$-\textbf{partially rigid} for $0<\rho\leq 1$, if, for all finite measurable sets $A$, there exists a sequence $n_m \rightarrow \infty$ such that $\lim_{m\rightarrow \infty} \mu(A \cap T^{n_m}(A)) \ge \rho \mu(A)$. If $T$ is at least $\rho$-partially rigid but not at least $\ve$-partially rigid for all $\ve > \rho$, then  $T$ is called $\boldsymbol{\rho}$-\textbf{partially rigid}. The transformation is called \textbf{rigid} if $\rho$ can be taken to be $1$.

We now describe our results. In Section~\ref{S:rank1} we review rank-one transformations in infinite 
measure and the notion of descendants.  In Section~\ref{S:conservativity} we give a
condition for rank-one transformations that implies conservativity of their cartesian products. 
We note that there exist infinite measure-preserving rank-one transformations $T$ such that $T\times T$ is not conservative \cite{AdFrSi97}. In Section~\ref{S:DE} we  generalize a condition from  \cite{BoFiMaSi01} for rank-one transformations and prove that 
this condition implies  weak double ergodicity. Section~\ref{S:rigid} studies conditions for partial and full rigidity. Our main construction is in Section~\ref{S:SDE}; we construct rank-one
transformations $T$ that are weak doubly ergodic and with $T\times T$ conservative but not 
ergodic. These constructions generalize the results  in \cite{BoFiMaSi01}, where  there are  (weak) doubly 
ergodic  transformations such that
$T\times T$ is not conservative (hence not ergodic). 
Thus our examples show that when $T$ is weak doubly ergodic, even when $T\times T$ is conservative it need not be ergodic. 
In this context we mention that the original examples of Kakutani and Parry \cite{KaPa63}, as well as the examples
of Aaronson, Lin and Weiss \cite{AaLiWe79} are Markov  shifts,  and they establish that the cartesian
square is not ergodic by showing that   the cartesian square is not conservative.  
More recently, Adams and the second named author \cite{AdSi15} have constructed rigid (rank-one) transformations that are of ergodic index $k$ for each $k$.
We also show that our construction can be taken to be rigid, hence of infinite conservative index. 

In the last section we consider nonsingular group actions. Recently, 
Glasner and Weiss \cite{GlWe} have studied a property  for nonsingular group actions called ergodic with isometric coefficients, and have proved that  
   ergodic cartesian square implies this property. They also showed that there exists an
 integer infinite measure-preserving action $T$ that is ergodic with isometric coefficients 
but is such that  $T\times T$ is not conservative, hence not ergodic \cite[Proposition 7.1]{GlWe}. 
In Section~\ref{S:EIC} we show that weak double ergodicity implies 
 ergodic with isometric coefficients.  Our construction from Section~\ref{S:SDE} thus  gives 
 an infinite measure-preserving transformation $T$  that  is 
 ergodic with isometric coefficients but while $T\times T$ is conservative it is not ergodic. We also discuss notions of measurable sensitivity, and show that EIC implies but is not implied by Li-Yorke measurable sensitivity and $W$-measurably sensitivity in Proposition \ref{P:EIC-LYM}.

\textbf{Acknowledgments:} This research was supported in part by   National Science Foundation grant DMS-1347804 and the Science Center of Williams College.

\section{Rank-One Transformations}\label{S:rank1}
Our main results will be achieved through rank-one cutting and stacking constructions, which are defined as follows. A \textbf{Rokhlin column} or {\bf column} $C$ is an ordered finite collection of pairwise disjoint intervals (called the $\mathbf{levels}$ of $C$) in $\mathbb{R}$, each of the same measure. We think of the levels in a column as being stacked on top of each other, so that the $(j + 1)$-st level is directly above the $j$-th level. Every column $C = \{I_j\}$ is associated with a natural column map $T_C$ sending each point in $I_j$ to the point directly above it in $I_{j+1}$. A $\mathbf{rank}$-$\mathbf{one}$ $\mathbf{cutting}$-$\mathbf{and}$-$\mathbf{stacking}$ construction for $T$ consists of a sequence of columns $C_n$ such that:

\begin{enumerate}
	\item The first column $C_0$ is the unit interval.
	\item Each column $C_{n+1}$ is obtained from $C_n$ by cutting $C_n$ into $r_n \geq 2$ subcolumns of equal width, adding any number $s_{n,k}$ of new levels (called $\mathbf{spacers}$) above the $k$th subcolumn, $k \in \{0,\ldots,r_n - 1\}$, and stacking every subcolumn under the subcolumn to its right. In our treatment of these constructions, the spacers will be intervals drawn from $X$ that are disjoint from the levels of $C_n$ and the other spacers added to it, They will also be of the same length as the levels of the subcolumns of $C_n$ (so that $T$ is measure preserving). In this way, $C_{n+1}$ consists of $r_n$ copies of $C_n$, possibly separated by spacers.
	\item $X = \bigcup_{n \in \N} C_n$.
\end{enumerate}

\noindent
We then take $T$ to be the pointwise limit of $T_{C_n}$ as $n \rightarrow \infty$. A transformation constructed with these cutting and stacking techniques is rank-one, and we often refer to cutting and stacking transformations as rank-one. A rank-one transformation is clearly conservative ergodic.

Now given any level $I$ from $C_m$ and any subsequent column $C_n,\, n \ge m$, we define the \textbf{descendants} of $I$ in $C_n$ to be the collection of levels in $C_n$ whose disjoint union is $I$. We let the $n^\text{th}$ stage \textbf{descendant set} $D(I,n)$ contain as elements the zero-indexed heights of these levels in $C_n$. For $j \ge 0$, let $h_j$ denote the height of the topmost level in $C_j$, and set $h_{j,k} = h_j + s_{j,k}$ for $k \in \{0,\ldots,r_j - 1\}$. If $I$ is a level in $C_i$ of height $h(I)$, then the descendants of $I$ in $C_{i+1}$ are at heights $h(I)$ and $h(I) + \sum_{k=0}^\ell h_{i,k}$, $0 \le \ell < r_i - 1$. For every $n \in \N$, we set 
\[
H_n = \{0\} \cup \left\{ \sum_{k = 0}^\ell h_{n,k}\, | \, 0 \le \ell < r_n - 1 \right\},
\]
and call $H_n$ the $n^{th}$-stage \textbf{height set} of $T$. It follows that for any $I$ a level of $C_i$ and $j \ge i$, $D(I,j) = H_i + \ldots + H_{j-1}$. (For integer sets $K,L \subset \Z$, we will set $K+L = \{k + \ell : \, k\in K,\, \ell \in L\}$ as the \textbf{sum set} of $K$ and $L$. ) One can infer a number of the properties of a rank-one transformation $T$ using its height sets and resultant descendant sets, as will be seen in the following sections.

\section{Conservativity of Products}\label{S:conservativity}

We have the following equivalent condition to conservativity of products of rank-one transformations, which is proved as Proposition 4.2 in \cite{Small2014}. 

\begin{proposition} \label{P:conservprods2}
	Let $T$ be a rank-one transformation. Then the product transformation $T^{(k)}=T\times \cdots \times T$ on $X$ is conservative if and only if for every $\ve > 0$ and $i \in \N$ there is a $j> i$ such that at for at least $(1 - \ve)|D(I,j)|^k$ of the $k$-tuples $(a_0,\ldots,a_{k-1}) \in D(I,j)^k$, where $I$ is the base of column $C_i$, there exist complementary $k$-tuples $(d_0,\ldots,d_{k-1}) \in D(I,j)^k$ satisfying $a_0 - d_0 = a_\ell - d_\ell \neq 0 $ for $\ell = 1,\ldots,k-1$. 
	\end{proposition}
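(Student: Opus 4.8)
The argument rests on translating the combinatorial condition into a statement about within‑column recurrence of $T^{(k)}$. Fix a level $I$ of $C_i$; more generally let $E=J_0\times\cdots\times J_{k-1}$ be a box of $C_i$‑levels, with $J_\ell$ at height $b_\ell$. For $j>i$ the box $E$ splits into $C_j$‑level sub‑boxes indexed by tuples $\vec a=(a_0,\dots,a_{k-1})\in D(I,j)^k$, the $\ell$‑th coordinate sitting at height $b_\ell+a_\ell$ in $C_j$. Because inside a column $T$ is just ``move up one level,'' one checks that $(T^{(k)})^n$ with $n\neq0$ carries the sub‑box of $\vec a$ onto the sub‑box of $\vec a+(n,\dots,n)$ whenever $\vec a+(n,\dots,n)\in D(I,j)^k$ (the heights $b_\ell$ cancel, and membership in $D(I,j)^k$ forces every intermediate height to stay $\le h_j$, so the orbit segment never leaves $C_j$), and that no other within‑$C_j$ return of a sub‑box of $E$ to $E$ occurs. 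Thus $\vec a$ is ``good'' in the sense of the proposition exactly when the $(1,\dots,1)$‑diagonal of $D(I,j)^k$ through $\vec a$ contains another point; consequently the number of good tuples equals $|D(I,j)|^k$ minus the number of such diagonals meeting $D(I,j)^k$ in a single point. I would record this dictionary first.

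\emph{``Only if.''} Suppose $T^{(k)}$ is conservative; fix $\ve>0$ and $i$, and let $I$ be the base of $C_i$. Conservativity gives that a.e.\ point of $I^k$ returns forward to $I^k$, so fix $N$ with $\mu^k(R_N)\ge(1-\tfrac\ve2)\mu^k(I)^k$, where $R_N=\{x\in I^k:(T^{(k)})^n x\in I^k\text{ for some }1\le n\le N\}$. The structural point is that for fixed $N$ the descendants of $I$ lying at height $<N$, or within $N$ of $\max D(I,j)$, form a vanishing fraction of $D(I,j)$ as $j\to\infty$: indeed $|D(I,j)|=\prod_{m=i}^{j-1}r_m\to\infty$, whereas such an extreme descendant must use only those coordinates $H_m$ with $h_m<N$ (every other coordinate being forced to its minimum, resp.\ maximum), and only finitely many $m$ have $h_m<N$ since $h_m\to\infty$. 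Pick $j$ so large that the ``extreme'' sub‑boxes of $I^k$ have total measure $<\tfrac\ve2\mu^k(I)^k$. Then for $x\in R_N$ with non‑extreme $C_j$‑heights the return (at a time $n_0\le N$) takes place entirely inside $C_j$, so the height tuple of $x$ is good; such $x$ fill measure $\ge(1-\ve)\mu^k(I)^k$, hence at least $(1-\ve)|D(I,j)|^k$ tuples are good.

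\emph{``If.''} Suppose the condition holds but $T^{(k)}$ is not conservative, and fix a wandering set $W$ of positive finite measure. Since the partitions of $X^k$ into $C_i$‑level boxes refine and generate, a Lebesgue‑density (martingale convergence) argument produces, for a.e.\ $x\in W$, boxes $E\ni x$ with $\mu^k(W\cap E)/\mu^k(E)\to1$; fix a small $\ve$ and such a box $E$ at some level $i$ with $W$‑deficit $<\ve^2$. Applying the hypothesis at level $i$ gives $j$ with at least $(1-\ve)|D(I,j)|^k$ good tuples; refine $E$ into $C_j$‑sub‑boxes. At least $(1-\ve)$ of these are good, and by Markov's inequality at least $(1-\ve)$ are ``mostly $W$'' (density $>1-\ve$). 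A counting argument with the $(1,\dots,1)$‑diagonals — a good, mostly‑$W$ sub‑box on a diagonal carrying $\ge2$ mostly‑$W$ points always has a mostly‑$W$ diagonal partner, while the remaining ``bad'' good, mostly‑$W$ sub‑boxes number at most $2\ve|D(I,j)|^k$ — yields, for $\ve<\tfrac14$, a good, mostly‑$W$ sub‑box $\sigma$ with a within‑$C_j$ return $(T^{(k)})^n\sigma=\tau$, $n\neq0$, where $\tau$ is also mostly $W$. Since $(T^{(k)})^n$ carries $\sigma$ onto $\tau$ measure‑preservingly and each of $\sigma,\tau$ is more than $(1-\ve)$‑filled by $W$, the images overlap $W$, giving $\mu^k(W\cap(T^{(k)})^{-n}W)>0$ and contradicting that $W$ is wandering.

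The main obstacle is the ``only if'' direction: conservativity only says the product orbit returns to $I^k$, but a priori that return could occur after the orbit has climbed out of $C_j$, passed through spacers, and come back, so it is not directly visible as a good tuple. Overcoming this needs the two‑step reduction above — first passing via conservativity to the set $R_N$ of uniformly bounded return times, then the structural lemma that a length‑$\le N$ orbit segment starting from a ``bulk'' descendant of $I$ cannot escape $C_j$ — and it is precisely here that the rank‑one structure (monotone growth of the $h_m$, nestedness of the descendant sets) is used. In the ``if'' direction the one nonroutine point is that an individual good sub‑box need not return \emph{forward}, so one cannot simply pass to a first‑return map; the diagonal counting circumvents this by playing returns of densely‑$W$‑filled sub‑boxes off against one another.
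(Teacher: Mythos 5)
Your proposal is correct, but note that the paper itself never proves Proposition \ref{P:conservprods2}: it is quoted verbatim from Proposition 4.2 of \cite{Small2014}, so there is no in-text argument to compare yours against line by line. Your blind proof is essentially the standard descendant-set argument that such criteria rely on, and it is sound. The dictionary (within-column returns of $C_j$-level sub-boxes of a $C_i$-level box correspond exactly to pairs of tuples in $D(I,j)^k$ differing by a nonzero multiple of $(1,\ldots,1)$, the height offsets cancelling) is the right reduction. In the ``only if'' direction you correctly isolate the real difficulty -- a return guaranteed by conservativity could a priori leave the column -- and your fix works: pass to the set $R_N$ of points of $I^k$ returning by time $N$, then observe that a descendant within $N$ of $\max D(I,j)$ must take the maximal element of $H_m$ in every coordinate $m$ with $h_m\ge N$ (distinct elements of $H_m$ differ by at least $h_m$, and $h_m\ge 2^{m-i}h_i$), so such descendants number at most a constant $c(N)$ while $|D(I,j)|\to\infty$; hence non-extreme points of $R_N$ yield good tuples and the good fraction exceeds $1-\ve$. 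In the ``if'' direction, the wandering-set-plus-density-box setup and the diagonal count are fine: singleton diagonals number at most $\ve|D(I,j)|^k$, so two $(1-\ve)$-full-of-$W$ sub-boxes must share a diagonal once $\ve$ is small, and the within-column map between them is measure-preserving, contradicting wandering. The only cosmetic caveat is that your quantitative bookkeeping (the ``$2\ve$'' bound, $\ve<\tfrac14$) should be stated via the singleton-diagonal count to be airtight, but the inequality you need does hold, so this is presentation rather than a gap.
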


This leads to the following theorem on the conservativity of products of rank-one transformations.

\begin{theorem}
Let $r_n = |H_n|$ and let $k \in \N$. If 
\[\prod_{n=0}^\infty \left(1-\frac{1}{r_n^{k-1}}\right) = 0,\]
then  the product transformation $\left(T^\ell\right)^{(k)}$ is conservative for any $\ell$. \label{T:conscuts}
\end{theorem}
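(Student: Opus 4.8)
The plan is to reduce the statement to the case $\ell=1$ and then apply Proposition~\ref{P:conservprods2}. Since $(T^\ell)^{(k)}=(T^{(k)})^\ell$, it suffices to show that $T^{(k)}$ is conservative and that conservativity passes to powers. For the latter: if $S$ is invertible, nonsingular and conservative and $\nu(A)>0$, then by the Halmos recurrence theorem almost every $x\in A$ returns to $A$ under $S$ infinitely often; among any $\ell+1$ of its return times two are congruent mod~$\ell$, so for a.e.\ $x\in A$ some iterate $S^{k}x$ ($k\ge 1$) lies in $A$ and returns to $A$ under $S^\ell$, i.e.\ lies in $B:=\bigcup_{m\ge 1}(A\cap S^{-\ell m}A)$. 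Hence $A\subseteq\bigcup_{k\ge 1}S^{-k}B$ up to a null set, and nonsingularity of $S$ forces $\nu(B)>0$; as $A$ was arbitrary, $S^\ell$ is conservative. When $k=1$, $T^{(1)}=T$ is conservative because it is rank-one, so from now on assume $k\ge 2$.

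To verify the criterion of Proposition~\ref{P:conservprods2} for $T^{(k)}$, fix $\ve>0$ and $i\in\N$ and let $I$ be the base of $C_i$. The key structural point is that the sum $D(I,j)=H_i+\cdots+H_{j-1}$ is \emph{direct}. Indeed, the descendants of $I$ in $C_j$ form a collection of $\prod_{n=i}^{j-1}r_n$ pairwise distinct levels of $C_j$, hence occupy $\prod_{n=i}^{j-1}r_n$ distinct heights, so $|D(I,j)|=\prod_{n=i}^{j-1}r_n=\prod_{n=i}^{j-1}|H_n|$; this equality forces every $a\in D(I,j)$ to have a unique expansion $a=\sum_{n=i}^{j-1}x_n$ with $x_n\in H_n$. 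Under the induced bijection $D(I,j)^k\cong\prod_{n=i}^{j-1}H_n^{\,k}$, uniform counting measure on $D(I,j)^k$ corresponds to the product of the uniform measures on the blocks $H_n^{\,k}$.

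Next, call a $k$-tuple $(a_0,\dots,a_{k-1})\in D(I,j)^k$ \emph{aligned at $n$} if the $n$-th coordinates of $a_0,\dots,a_{k-1}$ in the above expansion all equal a common value $c\in H_n$. Since $|H_n|=r_n\ge 2$, we may pick $c'\in H_n\setminus\{c\}$; replacing $c$ by $c'$ in the expansion of each $a_{\ell'}$ produces $d_{\ell'}\in D(I,j)$ with $a_{\ell'}-d_{\ell'}=c-c'\ne 0$ independent of $\ell'$, i.e.\ a complementary $k$-tuple. Thus every tuple aligned at some $n\in\{i,\dots,j-1\}$ has a complementary tuple. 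Now the events ``aligned at $n$'' live on the disjoint blocks $H_n^{\,k}$, hence are independent, and being aligned at $n$ has probability $|H_n|/|H_n|^k=r_n^{-(k-1)}$; therefore the fraction of $k$-tuples in $D(I,j)^k$ aligned at no $n\in\{i,\dots,j-1\}$ equals $\prod_{n=i}^{j-1}\bigl(1-r_n^{-(k-1)}\bigr)$. For $k\ge 2$ each factor lies in $[\frac12,1)$, so the prefix $\prod_{n=0}^{i-1}\bigl(1-r_n^{-(k-1)}\bigr)$ is a positive number; since $\prod_{n=0}^{\infty}\bigl(1-r_n^{-(k-1)}\bigr)=0$, the tail products $\prod_{n=i}^{j-1}\bigl(1-r_n^{-(k-1)}\bigr)$ tend to $0$ as $j\to\infty$. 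Choosing $j>i$ with this quantity $<\ve$, at least $(1-\ve)|D(I,j)|^k$ of the $k$-tuples have complementary tuples, and Proposition~\ref{P:conservprods2} yields conservativity of $T^{(k)}$.

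The only genuinely substantive step is the directness of the sum defining $D(I,j)$: it makes the count exact and matches the bad-tuple fraction \emph{precisely} to the hypothesis product $\prod(1-r_n^{-(k-1)})$, rather than to a weaker bound involving $\prod r_n^{k}$; everything else is bookkeeping, and the reduction to $\ell=1$ is routine but worth recording since conservativity does not obviously survive taking powers. Alternatively one can run the counting directly for $T^\ell$: the argument is identical except that one needs the common difference $c-c'$ above to be a nonzero multiple of $\ell$, which can be arranged using $O(\ell)$ aligned coordinates at once together with a pigeonhole on partial sums; enough aligned coordinates are still available with probability tending to $1$, since the hypothesis is equivalent to $\sum_n r_n^{-(k-1)}=\infty$.
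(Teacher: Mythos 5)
Your proof is correct and follows essentially the same route as the paper: you verify the criterion of Proposition~\ref{P:conservprods2} by showing that, thanks to the hypothesis, all but an $\ve$-fraction of $k$-tuples in $D(I,j)^k$ have a coordinate block in some $H_p$ where all entries agree, and then swap that common value to build the complementary tuple, finishing by noting that powers of a conservative transformation are conservative. Your only additions — explicitly justifying the uniqueness of the expansion $a=\sum_n x_n$ with $x_n\in H_n$ via $|D(I,j)|=\prod r_n$, and proving the power-conservativity step directly rather than citing it — are fine and do not change the argument.
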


\begin{proof}
 Let $I$ be the base of column $C_i$ for any fixed $i \in \N$. Fix any $\ve>0$. For any $j \ge i$, the descendant set $D(I,j)$ is given by $D(I,j) = H_i + H_{i+1} + \ldots + H_{j-1}.$
Let $(a_0,\ldots,a_{k-1})$ be a $k$-tuple in $D(I,j)^k$. For each $a_\ell \in D(I,j)$ with $\ell \in \{0,\ldots,k-1\}$, we can write $a_\ell = \sum_{ m = i}^{j-1} a_{\ell,m}$, where $a_{\ell,m} \in H_m$. Let $\rho_j$ denote the fraction of $k$-tuples in $D(I,j)^k$ of the form $(a_0,\ldots,a_{k-1})$ which do not have $a_{0,p} = a_{1,p} = \dots = a_{k-1,p}$ for some $p \in \{i,\ldots,j-1\}$. Then
\[\rho_{j+1} = \left(1-\frac{|H_j|}{|H_j|^k} \right)\rho_j =\left(1-\frac{1}{r_j^{k-1}}\right) \rho_j.\]
If $T$ satisfies the stated condition then
\[\rho_j \le \prod_{m = i}^{j-1} \left(1-\frac{1}{r_m^{k-1}}\right) \rightarrow 0,\]
so we can find a $j'\in \N$ such that at least some fraction $1-\ve$ of the $k$-tuples in $D(I,j')^k$ of the form $(a_0,\ldots,a_{k-1})$ have $a_{0,p} = a_{1,p} = \ldots = a_{k-1,p}$ for some $p \in \{i,\ldots,j'-1\}$. Consider such a $k$-tuple. Let $\gamma \in H_p$ be any other element in $H_p$ (i.e., $\gamma \ne a_{0,p}$), and set 
\[d_\ell = \gamma+ \sum_{\substack{m = i\\ m \ne p}}^{j'-1} a_{\ell,m} ,\]
which is also an element of $D(I,j')$.Because this holds for an arbitrarily high proportion of the tuples $(a_0,\ldots,a_{k-1}) \in D(I,j')^k$, Proposition \ref{P:conservprods2} implies that $T^{(k)}$ is conservative. 

It is known that composition powers of a conservative transformations are conservative (see e.g.\ \cite[Corollary 1.1.4]{Aa97}). Hence, $\left(T^\ell\right)^{(k)}$ must be conservative for any $\ell$. 
\end{proof}

\noindent\textbf{Remark}: It can easily be seen that the converse of Theorem \ref{T:conscuts} does not hold. For example, by letting the height sets of $T$ be arithmetic progressions with very large and quickly increasing lengths, we can obtain a transformation which is rigid.

\section{Double Ergodicity Preliminaries}
\label{S:DE}

To establish the non-ergodicity of cartesian squares of rank-one transformations, we require the following lemma.

\begin{lemma} \label{L:kproduct} For a rank-one cutting and stacking transformation $T$,  if $T\times T$ is conservative ergodic, then for every $i\in \N$, $\ve>0$, $b \in \{0,\ldots,h_i-1\}$, there is a natural number $j>i$ such that for at least $(1-\ve)|D(I,j)|^2$ pairs of descendants of the base $I$ of column $C_i$ of the form $(a,a') \in D(I,j)^2$, we have corresponding pairs $(d,d') \in D(I,j)^k$ such that $a-d = a'-d' - b$.
\end{lemma}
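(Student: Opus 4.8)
\medskip
\noindent\emph{Proof proposal.} The plan is to adapt the dynamical argument behind Proposition~\ref{P:conservprods2}, but to use the \emph{ergodicity} of $T\times T$, and not merely its conservativity, in order to absorb the shift by $b$. First I would pass to $S=T^{-1}$, so that $S\times S=(T\times T)^{-1}$ is again conservative and ergodic, and let $I$ be the base of $C_i$. Conservative ergodicity of $S\times S$ implies that the forward $S\times S$-orbit of $\mu\times\mu$-a.e.\ point visits every set of positive measure; applying this to the two finite positive-measure sets $I\times T^{-b}I$ and $I\times I$, I get that for a.e.\ $(u,v)\in I\times I$ the point $(u,T^{-b}v)\in I\times T^{-b}I$ satisfies $(S\times S)^{n}(u,T^{-b}v)=(T^{-n}u,\,T^{-(n+b)}v)\in I\times I$ for some integer $n=n(u,v)\ge 1$. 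Equivalently, for a.e.\ $(u,v)\in I\times I$ there is an $n\ge 1$ with $T^{-n}u\in I$ and $T^{-(n+b)}v\in I$. This is the only step that uses the hypothesis, and it is exactly where the prescribed shift $b$ enters.

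Next I would localize at a deep column $C_j$. For $\alpha\in D(I,j)$ write $\ell_\alpha$ for the descendant level of $I$ at height $\alpha$ in $C_j$, so that $I=\bigsqcup_{\alpha\in D(I,j)}\ell_\alpha$ and each $\ell_\alpha$ has measure $\mu(I)/|D(I,j)|$. The elementary point is that iterating $T^{-1}$ downward never leaves a given copy of $C_j$: if $u\in\ell_\alpha$ and $\alpha\ge m$, then $T^{-m}u\in\ell_{\alpha-m}$, which lies in $I$ exactly when $\alpha-m\in D(I,j)$. Hence, if $(u,v)\in\ell_\alpha\times\ell_{\alpha'}$ admits a return of the above form with $\alpha\ge n$ and $\alpha'\ge n+b$, then the four numbers
\[
a=\alpha,\qquad a'=\alpha',\qquad d=\alpha-n,\qquad d'=\alpha'-n-b
\]
all lie in $D(I,j)$, and $a-d=n$, $a'-d'=n+b$, so $a-d=a'-d'-b$. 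Thus every such cell $\ell_\alpha\times\ell_{\alpha'}$ produces a pair $(a,a')\in D(I,j)^2$ together with a matching pair $(d,d')\in D(I,j)^2$, which is the conclusion sought (the exponent $k$ in the statement should read $2$).

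Finally I would turn ``$\mu\times\mu$-a.e.\ point'' into ``a $(1-\ve)$-fraction of the $|D(I,j)|^2$ cells at a single level $j$''. Let $F_N\subseteq I\times I$ be the set of $(u,v)$ admitting a return as above of length $n\le N$; by the first step $F_N$ increases to a set of full measure in $I\times I$, so I first choose $N$ with $\mu\times\mu(F_N)\ge(1-\ve/2)\mu(I)^2$. With $N$ fixed, for each $j$ let $F_N^{(j)}$ be the union of those cells $\ell_\alpha\times\ell_{\alpha'}$ for which some $n\le N$ satisfies all of $\alpha\ge n$, $\alpha'\ge n+b$, $\alpha-n\in D(I,j)$, $\alpha'-n-b\in D(I,j)$. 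Since a fixed power $T^{-n}$ sends an entire level of $C_j$ onto an entire level, $F_N^{(j)}\subseteq F_N$, and a cell of $F_N$ can escape $F_N^{(j)}$ only through an ``underflow'', i.e.\ only if each of its candidate return times $n\le N$ has $\alpha<n$ or $\alpha'<n+b$. Such cells are contained in a union of at most $\sum_{n=1}^{N}(2n+b)$ rows and columns of cells, so
\[
\mu\times\mu\bigl(F_N\setminus F_N^{(j)}\bigr)\ \le\ \Bigl(\sum_{n=1}^{N}(2n+b)\Bigr)\frac{\mu(I)^2}{|D(I,j)|}\ \longrightarrow\ 0\quad\text{as } j\to\infty,
\]
since $|D(I,j)|=\prod_{m=i}^{j-1}|H_m|\to\infty$. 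Choosing $j$ large enough that this error is below $\ve\mu(I)^2/2$ gives $\mu\times\mu(F_N^{(j)})\ge(1-\ve)\mu(I)^2$, i.e.\ at least $(1-\ve)|D(I,j)|^2$ good cells, finishing the argument.

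I expect the substantive obstacle to be not the dynamical input but this last passage, namely ensuring that converting recurrence of almost every point into a quantitative count at one fixed column costs only a small, controlled fraction of the cells. Working with $T^{-1}$ rather than $T$ is what keeps this clean, since downward iterates cannot overshoot the top of a copy of $C_j$, so the only loss is the easily estimated underflow near the base of the column.
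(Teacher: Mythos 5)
Your argument is correct, but it is not the route the paper takes: the paper disposes of Lemma~\ref{L:kproduct} in one line, by citing Lemma~2.4 of \cite{Small2014} with $\alpha=(1,1)$, i.e.\ it invokes a ready-made statement about conservative ergodic products of powers $T^{\alpha_1}\times\cdots\times T^{\alpha_k}$ and their descendant sets, whereas you give a self-contained proof from first principles. Your two ingredients -- (i) conservative ergodicity of $(T\times T)^{-1}$ applied to the single set $I\times I$, giving that for a.e.\ $(u,v)\in I\times I$ there is $n\ge 1$ with $T^{-n}u\in I$ and $T^{-(n+b)}v\in I$, and (ii) the observation that downward iteration inside $C_j$ turns such a return into descendant identities $a-d=n$, $a'-d'=n+b$ provided there is no underflow, with the underflow cells confined to $O(N)$ rows and columns whose total measure is $O\bigl(\mu(I)^2/|D(I,j)|\bigr)\to 0$ since $|D(I,j)|=\prod_{m=i}^{j-1}r_m\to\infty$ -- are exactly the content one would expect the cited lemma to package, so your write-up has the virtue of making this paper's argument independent of \cite{Small2014} for the case actually used, at the cost of not covering the more general powers/$k$-fold version the reference provides. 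A few small points to tidy: you are right that the exponent $k$ in the statement is a typo for $2$; the recurrence fact should be quoted per fixed positive-measure set (``for each such set, a.e.\ point visits it''), which is all you use, rather than with the quantifiers in the order ``a.e.\ point visits every set''; the bound $\sum_{n=1}^{N}(2n+b)$ on the number of exceptional rows and columns over-counts (taking any one return time $n\le N$ of a point in a bad cell already forces $a<N$ or $a'<N+b$, so $2N+b$ suffices), but since it is independent of $j$ the limit argument is unaffected; and the inclusion $F_N^{(j)}\subseteq F_N$ is true but not needed -- what carries the proof is the reverse containment of $F_N\setminus F_N^{(j)}$ in the union of underflow rows and columns, which you argue correctly.
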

\begin{proof}
This is a straightforward application of Lemma 2.4 from \cite{Small2014} with $\alpha = (1,1)$.
\end{proof}

Say that a set $C$ is $\mathbf{(1-\boldsymbol{\ve})}$-\textbf{full} of $D$ if $\mu(D\cap C) \ge (1-\ve) \mu(C)$. The following lemma is standard:

\begin{lemma} \label{L:levelsa} Let $T$ be a rank-one cutting and stacking transformation. Given $\ve>0$ and any sets $A,B\subset X$, both of positive measure, there exist intervals $I$ and $J$ in some column $C_n$ of $T$, with $I$ above $J$, such that $I$ is $(1-\ve)$-full of $A$ and $J$ is $(1-\ve)$-full of $B$. 
\end{lemma}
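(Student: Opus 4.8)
The plan is to reduce the statement to the following assertion: in some column $C_n$ there is a level that is $(1-\ve)$-full of $A$ lying strictly above a level that is $(1-\ve)$-full of $B$. I would obtain this in two moves — first placing a full level of $A$ and a full level of $B$ inside one common column, and then carrying out a single additional cutting-and-stacking step to arrange the vertical order. For the first move, fix density points $a$ of $A$ and $b$ of $B$ (with respect to the filtration below); such points exist a.e.\ since $\mu(A),\mu(B)>0$. The levels of the columns $C_n$ form a refining sequence of finite partitions of $X$ that generates $\mathcal S$ modulo null sets, and for each point the level of $C_n$ containing it has measure $\prod_{m<n} r_m^{-1}\le 2^{-n}\to 0$; so by the martingale convergence theorem, writing $I_n(x)$ for the level of $C_n$ containing $x$, we get $\mu(A\cap I_n(a))/\mu(I_n(a))\to 1$ and $\mu(B\cap I_n(b))/\mu(I_n(b))\to 1$. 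Choosing $\eta$ with $0<\eta<\ve/4$ and then $n$ large, the level $I_0:=I_n(a)$ is $(1-\eta)$-full of $A$, the level $J_0:=I_n(b)$ is $(1-\eta)$-full of $B$, and both lie in the same column $C_N:=C_n$.

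For the second move, form $C_{N+1}$ from $C_N$: it consists of $r:=r_N\ge 2$ copies of $C_N$ (together with spacers), indexed so that the $j$-th copy lies above the $j'$-th copy whenever $j>j'$, and every level of $C_N$ has exactly one descendant in each copy, of measure $1/r$ times its own. Let $I_0^{(j)}$ be the descendant of $I_0$ in the $j$-th copy. Summing deficiencies, $\sum_j \mu\bigl(I_0^{(j)}\setminus A\bigr)=\mu(I_0\setminus A)\le\eta\,\mu(I_0)$, while each copy for which $I_0^{(j)}$ fails to be $(1-\ve)$-full of $A$ contributes more than $\ve\,\mu(I_0)/r$ to this sum; hence fewer than $\eta r/\ve< r/4$ copies are ``bad for $A$''. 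Likewise, using the descendants $J_0^{(j)}$ of $J_0$, fewer than $r/4$ copies are ``bad for $B$''. Since $r\ge 2$, it follows that at least one copy in the upper half of $C_{N+1}$ is good for $A$ and at least one copy in the lower half is good for $B$; taking $I:=I_0^{(j)}$ for such an upper copy and $J:=J_0^{(j')}$ for such a lower copy gives a level $(1-\ve)$-full of $A$ lying above a level $(1-\ve)$-full of $B$, as required.

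The main — indeed essentially the only — obstacle is the ordering in the second move: fullness of a level survives refinement only on average, so one cannot prescribe in advance which descendants of $I_0$ and $J_0$ remain full, and in particular $I_0$ and $J_0$ themselves need not be in the right order. The deficiency count above, which bounds the number of bad descendants after one cutting-and-stacking step by $\eta r/\ve$ uniformly in $r$, is exactly what resolves this, and it is the reason $\eta$ must be fixed small relative to $\ve$ before passing to a common column. The remaining ingredients — existence of density points, the generating and refining properties of the column levels, and the fact that a higher copy of $C_N$ yields higher levels in $C_{N+1}$ — are routine.
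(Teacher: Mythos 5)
The paper does not prove this lemma at all -- it is stated as ``standard'' -- so there is no in-paper argument to compare with; judged on its own, your proof is correct. The two steps are sound: density points (or martingale convergence along the refining levels) give a level nearly full of $A$ and a level nearly full of $B$ in a common column $C_N$, and the deficiency count after one further cutting stage (fewer than $\eta r/\ve<r/4$ bad descendants for each set, versus halves of size at least $\lfloor r/2\rfloor$) correctly produces a good-for-$A$ descendant strictly above a good-for-$B$ descendant, which is exactly the ordering issue the standard argument must address. Two cosmetic caveats: the levels of $C_n$ partition only the union of the levels of $C_n$, not all of $X$ (spacers added later are absent), so the density/martingale statement should be read for points once they have entered the columns -- harmless, since these sets increase to $X$; and for odd $r$ one should fix a convention for ``upper/lower half'' (top $\lceil r/2\rceil$ versus bottom $\lfloor r/2\rfloor$ copies works for every $r\ge 2$, the small cases $r=2,3$ being trivial because the bad count is then zero).
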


\noindent The following concerns weak double ergodicity and is Lemma 5.3 from \cite{Small2000}. 

\begin{lemma} \label{L:levelsb} Let $T$ be a rank-one transformation. Let $A,B\subset X$ be sets of positive measure, and choose any levels $I,J\subset C_n$ such that $\mu(I\cap A)+\mu(J\cap B)>\delta\mu(I)$, with $I$ a distance $\ell\ge 0$ above $J$. If we cut $I$ and $J$ into $r_n$ equal pieces $I_0,\ldots,I_{r_n-1}$ and $J_0,\ldots,J_{r_n-1}$, respectively (numbered from left to right), then there is some $k$ such that 
\[\mu(I_k\cap A)+\mu(J_k\cap B)>\delta \mu(I_k),\]
and $I_k$ will be $\ell$ above $J_k$ in $C_{n+1}$.  
\end{lemma}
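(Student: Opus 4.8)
The plan is to prove Lemma~\ref{L:levelsb} by a pigeonhole argument on the $r_n$ subcolumns, exploiting the fact that cutting $C_n$ into equal-width subcolumns partitions the level $I$ into pieces $I_0,\ldots,I_{r_n-1}$ of equal measure $\mu(I)/r_n$, and likewise $J$ into $J_0,\ldots,J_{r_n-1}$ with $\mu(J_k)=\mu(J)=\mu(I)$ (after rescaling, or more precisely $\mu(I_k)=\mu(J_k)$ since all levels in a column have the same measure). First I would observe that since $I$ sits directly above $J$ at distance $\ell$ in $C_n$ and each subcolumn is an unbroken vertical copy, the piece $I_k$ sits exactly $\ell$ above $J_k$ for every $k$; this gives the geometric conclusion of the lemma immediately, so the only content is the measure inequality.

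For the measure inequality, I would sum over $k$: because $\{I_k\}$ partitions $I$ and $\{J_k\}$ partitions $J$ (up to null sets, as cutting into equal-width subcolumns is just slicing the intervals), we have
\[
\sum_{k=0}^{r_n-1}\bigl(\mu(I_k\cap A)+\mu(J_k\cap B)\bigr)=\mu(I\cap A)+\mu(J\cap B)>\delta\,\mu(I)=\delta\sum_{k=0}^{r_n-1}\mu(I_k).
\]
Hence the average over $k$ of $\mu(I_k\cap A)+\mu(J_k\cap B)-\delta\mu(I_k)$ is strictly positive, so at least one index $k$ must satisfy $\mu(I_k\cap A)+\mu(J_k\cap B)-\delta\mu(I_k)>0$, which is the claimed inequality $\mu(I_k\cap A)+\mu(J_k\cap B)>\delta\mu(I_k)$.

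The only subtlety, and the point I would be most careful about, is bookkeeping the measures correctly: when $C_n$ is cut into $r_n$ subcolumns of equal width, each level of width $w$ becomes $r_n$ subintervals of width $w/r_n$, so $\mu(I_k)=\mu(I)/r_n$ and similarly $\mu(J_k)=\mu(J)/r_n=\mu(I)/r_n$ (all levels in a Rokhlin column have equal measure). One must also note that $A\cap I=\bigsqcup_k(A\cap I_k)$ up to a null set, which is immediate since the $I_k$ tile $I$. With these identifications the displayed telescoping sum is exact and the pigeonhole step is forced. No deeper obstacle arises; this is genuinely a ``standard'' averaging lemma, and the main care is simply to keep the spacer/width normalization straight so that $\mu(I_k)=\mu(J_k)$ and the factor of $r_n$ cancels cleanly on both sides of the inequality.
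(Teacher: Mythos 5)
Your averaging/pigeonhole argument is correct and is essentially the standard proof: the paper itself gives no argument here, citing the result as Lemma 5.3 of \cite{Small2000}, and the intended proof is exactly your observation that summing $\mu(I_k\cap A)+\mu(J_k\cap B)$ over $k$ recovers $\mu(I\cap A)+\mu(J\cap B)>\delta\mu(I)=\delta\sum_k\mu(I_k)$, forcing some index $k$ to satisfy the inequality, while $I_k$ stays exactly $\ell$ above $J_k$ because spacers are inserted only above each subcolumn, not within it. (Your parenthetical ``$\mu(J_k)=\mu(J)=\mu(I)$ after rescaling'' is garbled but harmless, since you immediately correct it to $\mu(I_k)=\mu(J_k)=\mu(I)/r_n$, and in fact the pigeonhole step needs only that the pieces partition $I$ and $J$.)
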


\noindent A \textbf{staircase transformation} is one for which we cut every column $C_n$ into $r_n$ pieces, and place $i$ spacers over its $i^\text{th}$ ($0$-indexed) subcolumn, so that its height set elements are sums of multiples of $h_n$ with triangular numbers. They became of interest when Adams \cite{Ad98} proved that the classical finite measure-preserving staircase transformations ($r_n=n$) is mixing.  In   \cite{Small2000}, \textbf{tower staircases} were defined as a staircase but with no restriction on the number of spacers in the last subcolumn and  it was shown that      all staircase transformations are (weakly) doubly ergodic \cite[Theorem 2]{Small2000}. We show in Proposition~\ref{P:hardstairs}   that this holds more generally for transformations that contain infinitely many height sets with a partial staircase configuration. 

A \textbf{high staircase} transformation, as defined in \cite{DaRy11}, is a modified staircase transformation in which we take $r_n \rightarrow \infty$ as $n \rightarrow \infty$, and we place $i + z_n$ spacers over the $i^\text{th}$ subcolumn of $C_n$, where $(z_n)_{n=1}^\infty$ is a sequence of positive integers. A high staircase is a tower staircase. Corollary \ref{C:highstaircaseDE} shows that high staircase transformations are all weak doubly ergodic, giving another proof of the result from
\cite{Small2000}. However, in Corollary \ref{C:notpwm}, we show that not all high staircase transformations are power weakly mixing (indeed, not all such transformations have conservative cartesian square), providing a counterexample to a conjecture made in \cite{DaRy11}, wherein high staircases that are power weakly mixing are constructed.

A rank-one transformation is an  \textbf{arithmetic rank-one} transformation if 
there exists an infinite sequence $(n_i)_{i\in \N}$ indexing nonnegative integers $a_{n_i}$ and height sets $H_{n_i}$ which contain subsets of the form:
\begin{align*}
S_{n_i}=\Bigg\{a_{n_i},\, a_{n_i}+h_{n_i}+k_{n_i} + 1,\, a_{n_i}+2h_{n_i}+2k_{n_i}+3,\ldots,&\\
a_{n_i}+(s_{n_i}-1)h_{n_i}+\sum^{s_{n_i}-1}_{q=0}\left(k_{n_i}+q\right)&\Bigg\},
\end{align*}
such that for all $i\in \N$, $k_{n_i}\ge 1$, $\frac{s_{n_i}}{r_{n_i}}>\tau$ for some fixed $\tau>0$, and $r_{n_i}$ is unbounded,
and $r_n\ge 2$ for all $n$.
 If in addition $(n_i)_{i \in \N} = i -1$, $a_{n_i} = 0$, and $H_{n_i} = S_{n_i}$ for all $i$ we say it is a \textbf{ strongly arithmetic   rank-one} transformation.

\begin{proposition} If $T$ is an arithmetic rank-one transformation, then it is weak doubly ergodic. \label{P:hardstairs}
\end{proposition}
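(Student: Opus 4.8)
The plan is to combine Lemma~\ref{L:levelsa} and Lemma~\ref{L:levelsb} with a counting argument on the staircase subsets $S_{n_i}$ to verify the defining inequalities of weak double ergodicity directly. Fix sets $A,B$ of positive measure and $\ve>0$. First I would apply Lemma~\ref{L:levelsa} to obtain, in some column $C_n$, levels $I$ above $J$ with $I$ being $(1-\ve)$-full of $A$ and $J$ being $(1-\ve)$-full of $B$; write $\ell\ge 0$ for the distance of $I$ above $J$. Then, using Lemma~\ref{L:levelsb} repeatedly (with $\delta$ close to $2$, since $\mu(I\cap A)+\mu(J\cap B)$ is close to $2\mu(I)$), I can track a pair of descendants $I_N\subset C_N$ above $J_N\subset C_N$, at fixed distance $\ell$ apart, such that $\mu(I_N\cap A)+\mu(J_N\cap B)$ stays above $\delta\mu(I_N)$ for all $N\ge n$. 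In particular I may choose $N=n_i$ to be one of the indices where $H_{n_i}$ contains the arithmetic staircase block $S_{n_i}$, with $r_{n_i}$ as large as I like.

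The heart of the argument is then to show that after one more cutting-and-stacking step using the staircase block $S_{n_i}$, the image $T^{p}I_{n_i}$ meets both $A$ (via $I_{n_i}$) and $B$ (via $J_{n_i}$) for an appropriate power $p$. Concretely, when $C_{n_i}$ is cut into $r_{n_i}$ subcolumns, two descendants of $I_{n_i}$ in $C_{n_i+1}$ coming from subcolumns with indices $q$ and $q'$ (both in the staircase range, so at least a $\tau$-fraction of indices are available) are separated in height by a difference of the form $(q'-q)h_{n_i} + \binom{q'}{2}-\binom{q}{2} + (q'-q)k_{n_i}$ — a value that, as $q,q'$ range over the staircase block, realizes an arithmetic-progression-like family of gaps. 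The key point is that the gap between consecutive staircase heights grows by exactly $h_{n_i}+k_{n_i}+q$ as $q$ increases, so consecutive gaps differ by $1$; this lets me hit the particular offset $\ell$ needed so that the descendant of $I_{n_i}$ sitting $\ell$ above lines up, under a single power $T^{p}$, with both a piece of $I_{n_i}$ (still $(1-\ve)$-full of $A$) and a piece of $J_{n_i}$ (still $(1-\ve)$-full of $B$). Because $I_{n_i}$ lies $\ell$ above $J_{n_i}$, the same power $T^p$ that carries a sub-level of $I_{n_i}$ up onto $I_{n_i}$ carries the corresponding sub-level of $J_{n_i}$ up onto $J_{n_i}$; choosing pieces fine enough (i.e. cutting a few more times, harmlessly, via Lemma~\ref{L:levelsb}) makes the measure overlaps positive. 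This yields a single $n=p$ with $\mu(A\cap T^{-p}A)>0$ and $\mu(B\cap T^{-p}A)>0$, which is exactly weak double ergodicity.

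I would organize the write-up as: (1) reduce to a matched pair of full levels $I_{n_i}\supset$-pieces above $J_{n_i}$ at distance $\ell$ in a stage where the staircase block $S_{n_i}$ is present and $r_{n_i}$ is large; (2) compute the height differences among descendants drawn from the staircase block, observing the "consecutive gaps differ by one" phenomenon that guarantees some descendant lands exactly $\ell$ below another; (3) translate this into the existence of a power $p$ with the two required positive overlaps, absorbing the $\ve$-losses by one further application of Lemma~\ref{L:levelsb}. The main obstacle I anticipate is step (2): making precise, with honest bookkeeping of the additive constants $a_{n_i}$, $h_{n_i}$, $k_{n_i}$ and the triangular-number terms, that the set of achievable height offsets between staircase descendants contains a full interval of integers long enough to include the target offset $\ell$ (equivalently, that the needed power $p$ actually exists), and controlling this uniformly using only $\tfrac{s_{n_i}}{r_{n_i}}>\tau$ and $r_{n_i}\to\infty$. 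This is essentially the mechanism already exploited for ordinary staircases in \cite{Small2000}, so I expect the argument from \cite[Theorem 2]{Small2000} to adapt, with the arithmetic block $S_{n_i}$ playing the role of the full staircase height set there.
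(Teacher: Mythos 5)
Your proposal follows essentially the same route as the paper's proof: reduce via Lemmas \ref{L:levelsa} and \ref{L:levelsb} to a pair of levels $I=T^{\ell}J$ in a staircase stage, nearly full of $A$ and $B$, then exploit the fact that consecutive gaps in the block $S_{n_i}$ differ by exactly one, so a single power $T^{p}$ (with $p=h_n+k_n+j$) carries one descendant of $I$ onto another descendant of $I$ and carries a descendant of $I$ exactly $\ell$ short of a descendant of $I$, i.e.\ onto a descendant of $J$, with the counting governed by $\frac{s_{n_i}}{r_{n_i}}>\tau$ and $s_{n_i}\to\infty$. The bookkeeping you defer in step (2) is exactly what the paper supplies (choosing $\nu<\frac{\ve\tau}{2}$ and $s_{n_i}>\frac{5\ell+5}{1-4\ve}$), so your plan is sound and matches the paper's argument.
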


\begin{proof}
Let $\ve<\frac{1}{4}$ and set $\nu$ positive with $\nu<\frac{\ve\tau}{2}$. Let $A$ and $B$ be sets of positive measure in $X$. By Lemma \ref{L:levelsa}, we can find levels $L$ and $M$ in some column $C_N$ with $L=T^\ell M$ and $L$ and $M$ both $\left(1-\frac{\nu}{2}\right)$-full of $A$ and $B$, respectively. By supposition, there exists an $i'\in \N$ such that $s_{n_{i'}}>\frac{5\ell+5}{1-4\ve}$. For brevity, set $n=n_{i'}$. Applying Lemma \ref{L:levelsb} $n-N$ times, we can find levels $I$ and $J$ in $C_n$ such that $\mu(I\cap A)+\mu(J\cap B)>(2-\nu)\mu(I)$, and $I=T^\ell(J)$ (that is, $I$ is $\ell$ above $J$). Obviously, this implies that 
\begin{align}
\mu(I\cap A)>(1-\nu)\mu(I) \qquad \label{E:cont1}
\mu(J\cap B)>(1-\nu)\mu(J). 
\end{align}
Now let $I_0,\ldots,I_{s_n-1}$ denote from left to right the descendants of $I$ in column $C_{n+1}$. Define $J_0,\ldots,J_{s_n-1}$ similarly. By \eqref{E:cont1}, fewer than $2\nu r_n$ of the descendants of $I$ are less than half-full of $A$. But by selection,
\[\frac{2\nu r_n}{s_n}<\frac{2\ve \tau r_n}{2s_n}<\ve.\]
So more $(1-\ve)s_n$ of the subintervals in $I_0,\ldots,I_{s_n-1}$ are more than half-filled with $A$ and similarly for $J_0,\ldots,J_{s_n-1}$ and $B$. Note that for any $j,\, 1\le j \le s_n-\ell-1$, we have that 
\begin{align*}
&T^{h_n+k_n+j}I_{j-1}=I_{j}\\
&T^{h_n+k_n+j}I_{j+\ell - 1}=T^{-\ell}(I_{j+\ell})=J_{j+\ell}.
\end{align*}
Let $K_1,K_2,L_1,L_2$ be subsets of $\{0,\ldots,s_n-\ell-2\}$ satisfying
\begin{align*}
K_1&=\left\{j\in \{1,\ldots,s_n-\ell-1\}:\, \mu\left(I_{j-1}\cap A\right)>\frac{1}{2}\mu\left(I_{j-1}\right)\right\},\\
K_2&=\left\{j\in \{1,\ldots,s_n-\ell-1\}:\, \mu\left(I_{j}\cap A\right)>\frac{1}{2}\mu\left(I_{j}\right)\right\},\\
L_1&=\left\{j\in \{1,\ldots,s_n-\ell-1\}:\, \mu\left(I_{j+\ell-1}\cap A\right)>\frac{1}{2}\mu\left(I_{j+\ell-1}\right)\right\}, \text{ and}\\
L_2&= \left\{j\in \{1,\ldots,s_n-\ell-1\}:\, \mu(J_{j+\ell})\cap B>\frac{1}{2}\mu\left(J_{j+\ell}\right)\right\}.
\end{align*}
We have $|K_1|,|K_2|,|L_1|,|L_2|>s_n-\ell-1-\ve s_n$. 
Thus, 
\begin{align*}
\left|\{1,\ldots,s_n-1\} \setminus \Big(K_1 \cap K_2 \cap L_1 \cap L_2 \Big)\right| \le 4(\ell+1+\ve s_n)\\
=4\ell+4+4\ve s_n<s_n-\ell-1,
\end{align*}
by selection of $n$ and $s_n$. So there exists some $j^*\in K_1 \cap K_2 \cap L_1 \cap L_2$ with
\begin{align*}
&\mu\left(T^{h_n+k_n+j^*} (A\cap I_{j^* -1})\cap (A\cap I_{j^*})\right)>0\\
&\mu\left(T^{h_n+k_n+j^*} (A\cap I_{j^*+\ell -1})\cap (B\cap J_{j^*+\ell})\right)>0.
\end{align*}
So $T$ is weak doubly ergodic. 
\end{proof}

\noindent As an easy consequence, we have:

\begin{corollary} [\cite{Small2000}]\label{C:highstaircaseDE}
All high staircase transformations are weak doubly ergodic.
\end{corollary}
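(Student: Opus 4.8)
The plan is to deduce Corollary~\ref{C:highstaircaseDE} by exhibiting, inside any high staircase transformation, the arithmetic rank-one structure required by Proposition~\ref{P:hardstairs}, and then simply invoking that proposition. Recall that in a high staircase we cut $C_n$ into $r_n$ subcolumns with $r_n \to \infty$, and place $i + z_n$ spacers above the $i^\text{th}$ subcolumn, where $z_n \ge 1$. Thus the contribution of column $C_n$ to a descendant of the base, obtained by descending through the first $\ell+1$ subcolumns, is a sum of the form $\sum_{q=0}^{\ell} (h_n + z_n + q) = (\ell+1)h_n + (\ell+1)z_n + \binom{\ell+1}{2}$, so that the full height set $H_n$ consists exactly of the partial sums $\{\,\sum_{q=0}^{\ell-1}(h_n + z_n + q) : 0 \le \ell \le r_n - 1\,\}$.

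First I would match this against the template set $S_{n_i}$ in the definition of an arithmetic rank-one transformation. Take the indexing sequence $(n_i)$ to be all of $\N$ (i.e. every column qualifies), set $a_{n_i} = 0$, and set $k_{n_i} = z_{n_i}$; note $z_{n_i} \ge 1$ is exactly the requirement $k_{n_i} \ge 1$. Then the generic element of $S_{n_i}$, namely $a_{n_i} + \ell h_{n_i} + \sum_{q=0}^{\ell-1}(k_{n_i}+q)$, becomes $\ell h_{n_i} + \ell z_{n_i} + \binom{\ell}{2}$, which is precisely the $\ell$-th element of $H_{n_i}$ computed above. Hence $S_{n_i} = H_{n_i}$ for every $i$, so one may in fact take $s_{n_i} = r_{n_i}$, giving $s_{n_i}/r_{n_i} = 1 > \tau$ for, say, $\tau = 1/2$. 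The condition that $r_{n_i}$ be unbounded is immediate from $r_n \to \infty$, and $r_n \ge 2$ holds for all large $n$ (and one may discard the finitely many small columns, or restrict $(n_i)$ to those $n$ with $r_n \ge 2$). Thus $T$ satisfies the hypotheses of Proposition~\ref{P:hardstairs}, and weak double ergodicity follows.

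The main thing to get right is the bookkeeping on the height-set elements: one must verify that the spacer rule $i + z_n$ really does produce the triangular-plus-linear combination $\ell h_n + \ell z_n + \binom{\ell}{2}$ as the $\ell$-th descendant contribution, matching the explicit first few terms $\{0,\, h_n + z_n + 0,\, 2h_n + 2z_n + (0+1),\, \ldots\}$ written out in the definition of $S_{n_i}$ (where there $k_{n_i} = z_n$ and the running sum $\sum_{q=0}^{\ell-1}(k_{n_i}+q) = \ell z_n + \binom{\ell}{2}$). There is no genuine obstacle here beyond this identification — the substance of the argument lives entirely in Proposition~\ref{P:hardstairs}. I would therefore keep the proof to a single short paragraph: observe that a high staircase is an arithmetic rank-one transformation with $(n_i) = \N$, $a_{n_i} = 0$, $k_{n_i} = z_{n_i}$, $s_{n_i} = r_{n_i}$, and $H_{n_i} = S_{n_i}$ (so that it is even strongly arithmetic rank-one once the initial columns are trimmed), and apply Proposition~\ref{P:hardstairs}.
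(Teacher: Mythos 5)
Your route is exactly the paper's: the corollary is stated there as an immediate consequence of Proposition~\ref{P:hardstairs}, so the only content is the parameter matching, and that is where your proposal slips. Your height-set computation is right: the $\ell$-th element of $H_n$ for a high staircase is $\ell h_n+\ell z_n+\binom{\ell}{2}$, so the consecutive gaps in $H_n$ are $h_n+z_n,\ h_n+z_n+1,\ h_n+z_n+2,\ldots$. But the template $S_{n_i}$ in the definition of an arithmetic rank-one transformation has second and third elements $a_{n_i}+h_{n_i}+k_{n_i}+1$ and $a_{n_i}+2h_{n_i}+2k_{n_i}+3$, i.e.\ consecutive gaps $h_{n_i}+k_{n_i}+1,\ h_{n_i}+k_{n_i}+2,\ldots$ (this is also the reading the proof of Proposition~\ref{P:hardstairs} relies on, where $T^{h_n+k_n+j}I_{j-1}=I_j$ for $j\ge 1$). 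Your generic formula $a_{n_i}+\ell h_{n_i}+\sum_{q=0}^{\ell-1}(k_{n_i}+q)$ does not reproduce those listed elements, and with your choice $a_{n_i}=0$, $k_{n_i}=z_{n_i}$ the $\ell$-th entries of $S_{n_i}$ and $H_{n_i}$ differ by $\ell$; so the claimed identity $S_{n_i}=H_{n_i}$, and with it the ``strongly arithmetic'' remark, is not literally correct.

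The fix is a one-line change, because the definition only requires $H_{n_i}$ to \emph{contain} a subset of the form $S_{n_i}$. If $z_n\ge 2$ you may take $a_{n_i}=0$, $k_{n_i}=z_{n_i}-1$, $s_{n_i}=r_{n_i}$; to cover $z_n=1$ (the definition demands $k_{n_i}\ge 1$) take instead $a_{n_i}=h_{n_i}+z_{n_i}$, the second element of $H_{n_i}$, with $k_{n_i}=z_{n_i}$ and $s_{n_i}=r_{n_i}-1$, so the remaining gaps $h_{n_i}+z_{n_i}+1,\ h_{n_i}+z_{n_i}+2,\ldots$ match the template and $s_{n_i}/r_{n_i}\ge 1/2>\tau$ with $\tau=1/3$; unboundedness of $r_{n_i}$ comes from $r_n\to\infty$. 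With either choice the hypotheses of Proposition~\ref{P:hardstairs} hold and Corollary~\ref{C:highstaircaseDE} follows, exactly as the paper intends. (Be aware that the paper's displayed last element of $S_{n_i}$ is itself inconsistent with its second and third elements; the gap-based reading above is the one its proof actually uses.)
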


\noindent It is clear that the height sets of staircase transformations make extensive use of triangular numbers. The following lemma is an easy observation which we will exploit later.

\begin{lemma} \label{L:tone} Fix integers $a,b\in \N$ and $c\in \Z$ such that $|c|<\min \{a,b\}$. Let $x_i=\frac{i(i+1)}{2}$ ($x_i$ is the $i^\text{th}$ triangular number). If $a\ne b$, then $|x_{a+c}-x_a-(x_{b+c}-x_b)|\ge c$.
\end{lemma}

\begin{lemma}\label{L:two} Let $k$ be a fixed integer and for an $r\in \N$, define $H(r)=\{1,\ldots,r\}$. Fix an $m\in \N$, and let $\ve \in \R$ with $\ve>0$. Let $D(r,m)\subset H(r)\times H(r)$ be the set of pairs $(a,d)\in H(r)\times H(r)$ such that $|a-d|< m$. Then 
\[\lim_{r\rightarrow \infty} \frac{|D(r,m)|}{|H(r)^2|}=0.\]
\end{lemma}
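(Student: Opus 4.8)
The plan is to estimate $|D(r,m)|$ directly by counting lattice points in a thin diagonal strip of the square $H(r)^2 = \{1,\ldots,r\}^2$. For each fixed value of $a \in \{1,\ldots,r\}$, the number of $d \in \{1,\ldots,r\}$ with $|a-d| < m$ is at most $2m-1$, since $d$ must lie in the interval $(a-m, a+m)$, which contains at most $2m-1$ integers. Summing over the $r$ possible values of $a$ gives the crude bound $|D(r,m)| \le (2m-1)r$. On the other hand $|H(r)^2| = r^2$. Hence
\[
\frac{|D(r,m)|}{|H(r)^2|} \le \frac{(2m-1)r}{r^2} = \frac{2m-1}{r}.
\]
Since $m$ is fixed, the right-hand side tends to $0$ as $r \to \infty$, which is the claim.

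A couple of small points I would address to be careful. First, the variable $k$ in the statement of Lemma~\ref{L:two} does not actually appear anywhere in the conclusion or in the definition of $D(r,m)$; I would simply note that the bound is uniform and does not depend on $k$, or else ignore $k$ entirely in the proof. Second, the bound $2m-1$ on the number of integers in $(a-m,a+m)$ holds regardless of whether $a$ is near the boundary of $\{1,\ldots,r\}$ — truncation to $H(r)$ only decreases the count — so there is no edge-case bookkeeping needed; one could even afford the weaker bound $2m$ without changing anything.

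There is essentially no obstacle here: the result is a one-line counting estimate, and the only thing to get right is the direction of the inequalities and the observation that $m$ (and $k$) are held fixed while $r \to \infty$. If one wanted a cleaner packaging, one could phrase it as: $D(r,m) \subseteq \bigcup_{t=-(m-1)}^{m-1} \{(a,a+t) : 1 \le a \le r,\ 1 \le a+t \le r\}$, a union of $2m-1$ "diagonals" each of size at most $r$, giving the same bound $|D(r,m)| \le (2m-1)r$.

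\begin{proof}
Fix $m \in \N$ and $\ve > 0$; the integer $k$ plays no role. For each $a \in \{1,\ldots,r\}$, the set of $d \in \{1,\ldots,r\}$ with $|a-d| < m$ is contained in the integer interval $(a-m, a+m)$, which contains at most $2m-1$ integers. Therefore
\[
|D(r,m)| \le (2m-1)\,r,
\]
and consequently
\[
\frac{|D(r,m)|}{|H(r)^2|} \le \frac{(2m-1)r}{r^2} = \frac{2m-1}{r} \xrightarrow[r\to\infty]{} 0,
\]
since $m$ is fixed. This proves the claim.
\end{proof}
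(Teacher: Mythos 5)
Your proof is correct, and it is a genuinely cleaner route than the paper's. You bound $|D(r,m)|$ row by row: for each fixed $a$ there are at most $2m-1$ admissible $d$, giving $|D(r,m)|\le (2m-1)r$ and hence a ratio at most $(2m-1)/r\to 0$; you also rightly note that $k$ and $\ve$ are inert in the statement. The paper instead covers the diagonal strip by the overlapping blocks $A_i^2$ with $A_i=\{i,\ldots,i+m-1\}$ and runs an inclusion--exclusion count to get an \emph{exact} formula $|D(r,m)|=2mr-m^2-m-r+1$ (displayed as equation \eqref{E:pairsinequality}); this is the same elementary "thin diagonal strip" idea, but executed as an explicit enumeration rather than a crude upper bound, and in fact the paper's bookkeeping has minor slips (the last window's indexing and the resulting constant terms) that do not affect the limit. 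What the exact formula buys is downstream convenience: in the appendix the paper solves the inequality in \eqref{E:pairsinequality} to produce the explicit choice of $r_{n+1}$ in \eqref{E:distanceinequality}. Your bound proves the lemma just as well and would also suffice for those later constructions (one would simply solve $(2m-1)/r<\ve$ instead), but it does not reproduce the particular closed-form expression the paper later quotes. No gap in your argument.
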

\begin{proof}
Let $r>m$. Let $A_1=\big\{1,2,\ldots,m\big\}\subset H(r)$, $A_2=\big\{2,3,\ldots,m+1\big\}$, and construct sets similarly until we have $A_{r-m}=\big\{r-m,\ldots,r\big\}$. The number of pairs in $A_1^2$ is $|A_1|^2= m^2$. The number of pairs in $A_2^2$ is also $m^2$, but $(m-1)^2$ of these pairs are also in $A_1^2$ (specifically, those that occur in $\{2,m\}^2$. So $|A_1^2\cup A_2^2|=2m^2-(m-1)^2$. Continuing this process for $A_3,\ldots,A_{r-m}$, we see that 
\[\left| \bigcup^{r-m}_{i=1} A_i^2\right|=(r-m)m^2-(r-m-1)(m-1)^2= 2mr-m^2-m-r+1.\]
Of course, if $(a,d)\in D(r,m)$, then $(a,d)\in A_i$ for some $i$. Also, if $(a,d)\in A_i$ for some $i$, then clearly $(a,d)\in D(r,m)$. Hence 
\begin{align}
&\frac{|D(r,m)|}{|H(r)^2|} = \frac{2mr-m^2-m-r+1}{r^2}, \text{ and therefore} \label{E:pairsinequality} \\
&\lim_{r\rightarrow \infty} \frac{|D(r,m)|}{|H(r)|^2}=0.  \nonumber
\end{align}
\end{proof}

\section{Partial Rigidity and Rigidity} \label{S:rigid}

In this section, we will see that rigidity can be determined in the notation of height sets, with an application to the transformation discussed in Theorem \ref{T:dergodic}. We begin with the following proposition, which gives us a useful way of constructing rank-one transformations with partial and full rigidity using only the height sets. As standard notation, for $k \in \N$, let $[k] = \{0,\ldots , k \}$.

\begin{proposition} Let $T$ be a rank-one transformation defined with its height sets $(H_n)_{n\in \N}$. If there is a sequence $n_m \rightarrow \infty$ and a corresponding sequence $a_{n_m}$ of positive integers such that 
\[\lim_{m\rightarrow \infty} \frac{\left|(a_{n_m} + H_{n_m}) \cap H_{n_m} \right|}{\left| H_{n_m}\right|} = \rho,\]
then $T$ is at least $\rho$-partially rigid. \label{P:partialrigid}
\end{proposition}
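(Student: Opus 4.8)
The plan is to recover the partial rigidity inequality directly from the hypothesis by tracking how the set $a_{n_m} + H_{n_m}$ interacts with the descendants of a fixed level. First I would fix a finite measurable set $A$ and, by the standard approximation built into rank-one constructions (Lemma~\ref{L:levelsa} applied with $A=B$, or just the density theorem for the generating algebra), reduce to the case where $A$ is a single level $I$ of some column $C_i$, up to an error $\ve$ that can be absorbed at the end. Indeed it suffices to produce, for each level $I$ and each $\eta>0$, a power $p$ with $\mu(I\cap T^{p}I)\ge (\rho-\eta)\mu(I)$; a routine patching argument over a finite union of levels that is $(1-\ve)$-full of $A$ then yields $\liminf_p \mu(A\cap T^{p}A)\ge (\rho-\eta)\mu(A)$ after letting $\ve,\eta\to 0$, giving a sequence $p_m\to\infty$ with the required liminf (one must also check $p_m\to\infty$, which follows because the relevant powers grow with the column index $n_m$).

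The core computation is the following. For $j>i$, the level $I$ splits in $C_j$ into $|D(I,j)|$ descendants, indexed by the heights $D(I,j)=H_i+\cdots+H_{j-1}$; each has measure $\mu(I)/|D(I,j)|$. For a fixed $a\in\N$, the map $T^{a}$ sends the descendant of $I$ at height $h$ to the level of $C_j$ at height $h+a$, and this image is again a descendant of $I$ precisely when $h+a\in D(I,j)$, in which case $T^{a}$ maps that descendant bijectively (as an interval translation) onto another piece of $I$. Hence, provided $a < h_j - \max D(I,j)$ so that no descendant is pushed off the top of $C_j$ (a harmless condition we can always arrange by working inside a sufficiently tall column — this is where one uses that after stage $n_m$ there are further cuts and spacers, or one simply chooses $j=n_m+1$ and notes $a_{n_m}$ is dwarfed by $h_{n_m+1}$), we get
\[
\mu(I\cap T^{a}I)\ \ge\ \frac{|(a+D(I,j))\cap D(I,j)|}{|D(I,j)|}\,\mu(I).
\]
Taking $j=n_m+1$, so that $D(I,n_m+1)=H_i+\cdots+H_{n_m-1}+H_{n_m}$, and $a=a_{n_m}$, I would use the sum-set structure: writing $D=D'+H_{n_m}$ with $D'=H_i+\cdots+H_{n_m-1}$, the translate $a_{n_m}+D$ overlaps $D$ in at least $|D'|\cdot|(a_{n_m}+H_{n_m})\cap H_{n_m}|$ elements, because for each $d'\in D'$ and each $x\in (a_{n_m}+H_{n_m})\cap H_{n_m}$ the element $d'+x$ lies in both $D$ and $a_{n_m}+D$ (using disjointness/uniqueness of these sum decompositions, which is exactly the descendant bookkeeping set up in Section~\ref{S:rank1}). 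Dividing by $|D|=|D'|\cdot|H_{n_m}|$ gives the ratio $\ge |(a_{n_m}+H_{n_m})\cap H_{n_m}|/|H_{n_m}|$, which tends to $\rho$ by hypothesis.

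Putting the pieces together: for $I$ a level of $C_i$ and $m$ large enough that $n_m>i$, we obtain $\mu(I\cap T^{a_{n_m}}I)\ge \tfrac{|(a_{n_m}+H_{n_m})\cap H_{n_m}|}{|H_{n_m}|}\mu(I)$, hence $\liminf_{m}\mu(I\cap T^{a_{n_m}}I)\ge\rho\,\mu(I)$; the approximation step upgrades this to arbitrary finite $A$ with constant $\rho$ (up to $\ve$, then $\ve\to 0$). I expect the main obstacle to be the bookkeeping in the second-to-last step: one must be careful that the decomposition of an element of $D(I,j)$ as a sum of one term from each $H_\ell$ is unique (so that the counted overlaps are genuinely distinct), and one must control the "overshoot at the top of the column" so that $T^{a_{n_m}}$ really does carry descendants to descendants rather than to spacers or outside $C_j$ — this is handled by passing to the column $C_{n_m+1}$ (or a later one) where $h_{n_m+1}$ exceeds $a_{n_m}+\max H_{n_m}$, which is automatic since $a_{n_m}+H_{n_m}$ must meet $H_{n_m}$ and hence $a_{n_m}$ cannot be much larger than $\max H_{n_m}$, while $h_{n_m+1}\ge (r_{n_m})\cdot$(something at least as large). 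Everything else is a routine limit.
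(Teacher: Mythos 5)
Your proposal is, in substance, the paper's own argument: the same overlap count for a set of the form $E+H_{n_m}$ against its translate by $a_{n_m}$ (the paper takes $E=S_{n_m}\subset\{0,\dots,h_{n_m}-1\}$, the heights of a union of levels approximating $A$ in $C_{n_m}$, viewed as heights $H_{n_m}+S_{n_m}$ in $C_{n_m+1}$, rather than your $E=D(I,n_m)$ for a level of an early column), followed by the same absorption of the approximation error and the same remark that $a_{n_m}\to\infty$ since $a_{n_m}$ is a difference of two elements of $H_{n_m}$ and hence at least $h_{n_m}$. Two small points. First, your ``no overshoot'' proviso $a_{n_m}<h_j-\max D(I,j)$ is not automatic as you assert: $a_{n_m}$ can be as large as $\max H_{n_m}$, which need not be smaller than $h_{n_m+1}-\max D(I,n_m+1)$ (e.g.\ with two cuts and no spacers over the last subcolumn); fortunately the proviso is also unnecessary, because your lower bound only counts descendants at heights $h$ with $h+a_{n_m}\in D(I,j)$, and for those $h+a_{n_m}\le h_j-1$ automatically, so $T^{a_{n_m}}$ does carry that descendant to the descendant at height $h+a_{n_m}$. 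Second, for the patching step you need an approximation $B$ with $\mu(A\,\Delta\,B)$ small (both $A\setminus B$ and $B\setminus A$ controlled), which the density of unions of levels gives, not mere $(1-\ve)$-fullness of a level; the paper phrases the estimate exactly this way, and your uniqueness-of-decomposition check for $D'+H_{n_m}$ is correct since $D'\subset[0,h_{n_m})$ while distinct elements of $H_{n_m}$ differ by at least $h_{n_m}$.
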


\begin{proof}

First, for any $n \in \N$, let $S_n\subset \{0,\ldots,h_{n-1}\}$ be any subset of $[h_n]$. Let $I_n$ denote the base of $C_n$ and $I_{n+1}$ the base of $C_{n+1}$. Recall that 
\[\bigcup_{j \in S_n} T^j I_n = \bigcup_{j \in H_n + S_n} T^j I_{n+1}.\]
Suppose that for some natural number $k$, $\left|(k+ H_n ) \cap H_n \right| = \gamma \left| H_n \right|$, where $\gamma \in (0,1)$. It follows that 
\[\left| (k+ H_n + S_n ) \cap (H_n + S_n)\right| \ge \gamma \left| H_n + S_n\right|,\]
implying 
\begin{align}
\mu \Bigg( \Bigg(\bigcup_{j \in S_n} T^j I_n\Bigg) &\cap T^k\Bigg(\bigcup_{j \in S_n} T^j I_n\Bigg)\Bigg) \nonumber\\
& \ge \mu\left(\bigcup_{j \in \left(k+H_n + S_n\right)\cap \left( H_n + S_n \right)} T^j I_{n+1} \right) \nonumber \\
& \ge \gamma \mu \left( \bigcup_{j \in H_n + S_n} T^j I_{n+1}\right) = \gamma \mu\left(\bigcup_{j \in S_n} T^j I_n\right). \label{E:htsetrigid}
\end{align}

Now fix any finite measure set $A$ and $\ve>0$ with $\ve < \mu(A)$. Set an $N \in \N$ such that for every $n\ge N$ there exists some set of levels $D^*(A,n) \subset [h_n]$ and corresponding set $B = \bigcup_{j \in D^*(A,n)}T^j (I_n)$, where $I_n$ is the base of $C_n$, such that $\mu(A\, \Delta \, B) < \frac{\ve}{6}$. So $A$ is closely approximated by the set of levels $B$. By the assumed conditions on $T$, we can also fix $N$ high enough such that $\left| (a_{n_m} + H_{n_m}) \cap H_{n_m} \right| > \left(\rho- \frac{\ve}{2\mu(A)}\right)$ whenever $n_m \ge N$. For brevity, let $n=n_m \ge N$. By application of \eqref{E:htsetrigid}, 
\[\mu\left( B \cap T^{a_n} B \right)  >  \left(\rho- \frac{\ve}{2\mu(A)}\right) \mu(B).\]
Noting that $\mu(B)\ge \mu(A)-\mu(A\,  \Delta B) > \mu(A) - \frac{\ve}{6}$, this allows us to write that 
\begin{align*}
\mu\left(A \cap T^{a_n} A\right) &\ge \mu\left( (A\cap B) \cap T^{a_n} (A\cap B) \right)\\
& \ge \mu\left( B \cap T^{a_n} B \right) - 2\mu(B\setminus A)\\
& \ge \left(\rho- \frac{\ve}{2\mu(A)}\right) \mu(B) - \frac{2 \ve}{6} \\
& >\left(\rho- \frac{\ve}{2\mu(A)}\right) \left( \mu(A) - \frac{\ve}{6}\right) - \frac{2\ve}{6} \\
& = \rho \mu(A) - \ve.
\end{align*}
Note that we necessarily have $a_{n_m} \ge \min\left((H_{n_m}-H_{n_m})\cap \N\right)$, or else $\left(a_{n_m} + H_{n_m} \right)\cap H_{n_m} = \emptyset$ when $a_{n_m}\ne 0$. But 
\[
\lim_{m\rightarrow \infty} \min\left((H_{n_m}-H_{n_m})\cap \N \right) = \infty,
\]
 so $\lim_{m\rightarrow \infty} a_{n_m} = \infty$. Hence, the sequence $(a_{n_m})$ forms a rigidity sequence for any $A$. 
\end{proof}

\section{A Strictly weak doubly ergodic, Rigidity-Free Transformation $T$}
\label{S:SDE}
 
We now construct a weak doubly ergodic transformations with $T\times T$ conservative but not ergodic. First, we define $T$ by its height set sequence. For even $n$, let $H_n = \{0,g_n\}$, where $g_n$ is chosen to satisfy $g_n > 2 \max D(I,n)+1$. By Lemma \ref{L:two}, we can choose the number of cuts to be employed in $(n+1)^\text{th}$ height set $r_{n+1}$ so high that at least $1-\frac{1}{4(n+1)^2}$ of the pairs $\left(i,j\right)\in \{0,\ldots,r_{n+1}-1\}^2$ have $|i-j|>2\max D(I,n+1)+1 = 2 \max D(I,n) + 2g_n + 1$. Place enough spacers on the rightmost subcolumn of $C_n$ such that 
\begin{equation} \label{E:hineq}
h_{n+1} > 2 \sum_{i=1}^{r_{n+1} - 1} i + 2\max D(I,n+1) + 1. 
\end{equation}

For odd $n$, Set 
\[H_{n} = \left\{ 0, \, h_{n} +1,\, 2h_{n}+3,\, \ldots,\, (r_{n}-1) h_{n} + \sum_{i=1}^{r_n-1} i \right\}.\]
For notational ease, we may write 
\[H_n = \big\{ \beta_0,\ldots,\beta_{r_n-1} \big\},\]
where $\beta_i = i h_n + x_i - 1$, and $x_i$ is the $i^\text{th}$ triangular number.

\begin{lemma}\label{L:tqnotergodic}
$T\times T$ is not ergodic.
\end{lemma}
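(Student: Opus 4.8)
The plan is to exhibit a nontrivial $T\times T$-invariant set, or rather a nontrivial invariant function, built from the even-stage height sets $H_n = \{0,g_n\}$. The key observation is that when we pass from $C_n$ to $C_{n+1}$ at an even stage $n$, every descendant of the base $I$ splits into exactly two descendants whose heights differ by $g_n$, and $g_n$ was chosen so large (larger than $2\max D(I,n)+1$) that the ``low'' copy and the ``high'' copy of each level in $C_n$ sit in cleanly separated height ranges inside $C_{n+1}$. Thus for a point $x$ in the base of $C_0$ one can track, at each even stage, whether $x$ currently lies in the ``low'' or ``high'' half of its two descendants; this records a bit $b_n(x)\in\{0,1\}$ for each even $n$, and $x$'s height in $C_{j}$ is, modulo the contributions of the odd stages and a bounded error, determined by $\sum_{n\le j,\ n\text{ even}} b_n(x)g_n$.

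First I would make precise the claim that $g_n$ being much larger than everything that came before forces the two halves to be separated: if $a,a'$ are descendants of $I$ in $C_{n+1}$ with $a$ a ``low'' descendant (of some level in $C_n$) and $a'$ a ``high'' one, and if additionally all later height-set increments $H_{n+1},H_{n+2},\dots$ are too small to bridge the gap $g_n$ — which is the content of the growth condition \eqref{E:hineq} on $h_{n+1}$ together with the choice $g_n > 2\max D(I,n)+1$ — then the two halves never get mixed up at later stages either. Concretely I would define, for $\mu$-a.e.\ $x\in X$ and each even $n$ large enough that $x$ is already in $C_n$, the quantity $\phi_n(x) = $ the $n$th bit $b_n(x)$ as above, and show that $\phi_n$ stabilizes: $b_n(Tx) = b_n(x)$ for all sufficiently large even $n$ (depending on $x$), because applying $T$ once moves $x$ up by one level within $C_m$ for large $m$, and a single-level move is far too small — by \eqref{E:hineq} — to change which half of the $g_n$-gap $x$ is on. Hence the tail behavior of the sequence $(b_n(x))$ is $T$-invariant.

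The cleanest way to package this is: define $f(x) = \liminf_{n\text{ even}} b_n(x)$ (or a suitable Borel function of the tail of $(b_n(x))_n$, e.g.\ $\limsup$), observe $f$ is measurable and $T$-invariant by the stabilization just described, and then check $f$ is not a.e.\ constant. Non-constancy holds because the cut numbers $r_{n+1}$ at the odd (really: the post-even) stages were chosen via Lemma~\ref{L:two} so that a $1-\frac1{4(n+1)^2}$ fraction of pairs of subcolumns are ``far apart'' — but more simply, at each even stage both the low and the high descendant genuinely occur, each with half the measure, and these choices are asymptotically independent across stages, so both $\{f=0\}$ and $\{f=1\}$ (or whichever tail events one picks) have positive measure by a Borel–Cantelli / Kolmogorov zero-one type count on the product structure of $D(I,j) = H_i+\cdots+H_{j-1}$. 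Therefore $\{f=0\}$ is a $T$-invariant set with $0<\mu(\{f=0\})$ and $0<\mu(X\setminus\{f=0\})$; the same set is then $T\times T$-invariant with positive, non-full $\mu\times\mu$-measure on, say, $\{f=0\}\times\{f=1\}$, so $T\times T$ is not ergodic.

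The main obstacle is the stabilization step: rigorously controlling the ``bounded error'' from the odd stages and from the single-level shift, and showing it never accumulates enough to flip the recorded bit $b_n$. This is exactly where the two quantitative choices in the construction are used — $g_n > 2\max D(I,n)+1$ guarantees the low/high halves of stage $n$ are separated by a gap wider than the entire height range $D(I,n)$ that could be contributed ``from below,'' and the inequality \eqref{E:hineq}, $h_{n+1} > 2\sum_{i=1}^{r_{n+1}-1} i + 2\max D(I,n+1)+1$, guarantees that the triangular-number perturbations introduced at the following odd stage (bounded by $2\sum i$) together with $\max D(I,n+1)$ stay below $h_{n+1}$, hence cannot propagate a carry across the $g_n$-gap. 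I would state a single lemma isolating ``a shift by at most $\max D(I,n+1)$ within $C_{n+1}$ preserves $b_n$,'' prove it by the interval-arithmetic estimate, and then iterate it up the tower; everything else is bookkeeping on sum sets.
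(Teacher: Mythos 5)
Your approach cannot work, and the reason is structural rather than a fixable detail: you end by producing a $T$-invariant set $\{f=0\}$ of positive, non-full measure, i.e.\ you prove that $T$ itself is not ergodic. But $T$ is a rank-one transformation, hence conservative ergodic, and indeed the whole point of this construction is that $T$ is weak doubly ergodic (Theorem~\ref{T:dergodic}); so no such $f$ can exist. Concretely, your stabilization step is actually correct -- for a.e.\ $x$ the height increments $\epsilon_n(x)\in H_n$ satisfy $\epsilon_n(Tx)=\epsilon_n(x)$ for all large $n$, so tail functions of the bit sequence $(b_n(x))$ are $T$-invariant -- but the non-constancy claim is false, and the very Borel--Cantelli/independence heuristic you invoke shows why: since at each even stage both choices occur with probability bounded away from $0$ and $1$ (asymptotically independently across stages), a.e.\ point has $b_n=0$ infinitely often \emph{and} $b_n=1$ infinitely often, so $\liminf_n b_n=0$ a.e.\ and $\limsup_n b_n=1$ a.e.; every tail function is a.e.\ constant, consistent with ergodicity of $T$. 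No one-coordinate invariant can detect the failure of ergodicity of $T\times T$, and neither can non-conservativity, since $T\times T$ is conservative here by Theorem~\ref{T:conscuts} (the even stages have $r_n=2$).

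The paper's proof is genuinely two-dimensional: it applies the descendant criterion of Lemma~\ref{L:kproduct} (a necessary condition for ergodicity of $T\times T$) with $b=1$, and shows that for every odd $n$ more than half of the pairs $(a,a')\in D(I,n)^2$ admit no pair $(d,d')\in D(I,n)^2$ with $a-a'=d-d'+1$. The quantitative choices you correctly identified -- $g_n>2\max D(I,n)+1$ at even stages, the inequality \eqref{E:hineq}, and the triangular-number estimate of Lemma~\ref{L:tone} at odd stages -- are used there to show that whenever the height-set components of $(a,a')$ and $(d,d')$ first differ, the quantity $|a-a'-d+d'|$ is forced to exceed $1$ (in fact to exceed $2\max D(I,k)+1$ minus the smaller-scale contributions), while the choice of $r_{n+1}$ via Lemma~\ref{L:two} guarantees that the "good" pairs have density more than $\tfrac12$ in $D(I,n)^2$. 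If you want an invariant-set picture, the correct object is a function of the \emph{pair}, built from the tail behavior of the height difference $u_n(x)-u_n(y)$, not a product of one-coordinate functions; the paper's combinatorial argument is exactly the rigorous form of that idea.
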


\begin{proof}
Fix an odd $n \in \N$. Let $(\beta_i,\beta_j)$ be one of the pairs in $H_n$ such that $|i - j| > 2\max D(I,n) + 1$. Let $\beta_h$ and $\beta_\ell$ be two other elements of $D(I,n)$---we claim that if $\beta_i - \beta_j \ne \beta_h + \beta_\ell$, then the following inequality always holds:
\[\left|\beta_i-\beta_j-\beta_h+\beta_\ell\right|>2\max D(I,n)+1.\]
Suppose first that we had $h-\ell\ne i-j$. Then we should have
\begin{align*}
\left|\beta_i-\beta_j-\beta_h+\beta_\ell\right|&=\left|(i-j-h+\ell)h_n+x_{i-1}-x_{j-1}-x_{h-1}+x_{\ell-1}\right|\\
&\ge \left|h_n-2x_{r_n-1}\right|\\
&=2\max D(I,n)+1.
\end{align*}
where the last line follows from \eqref{E:hineq}. On the other hand, if $h-\ell=i-j$, then the situation is the one encountered in Lemma \ref{L:two}, with $c=j-i$ and $a=i-1$. That is, if $\beta_i-\beta_j \ne \beta_h-\beta_\ell$ (implying that $i\ne h$) we should have 
\begin{align*}
\left|\beta_i-\beta_j-\beta_h+\beta_\ell\right|
&=\left|x_{i-1+j-i}-x_{i-1}-x_{j-i+h-1}+x_{h-1}\right|\\
&=\left|i-j\right|\left| i-h\right|
\ge |i-j|>2\max D(I,n)+1.
\end{align*}
So, letting $(\beta_i,\beta_j)$ be one of the pairs such that $|i-j|>2\max D(I,n)+1$, there is no pair $(h,\ell)\in \big\{0,\ldots,r_n-1\big\}^2$ such that $\left|\beta_i-\beta_j-\beta_h+\beta_\ell\right|\ne 0$ but $\left|\beta_i-\beta_j-\beta_h+\beta_\ell\right|\le 2\max D(I,n)+1$. Let $K_n$ denote the subset of $H_n^2$ consisting of precisely the pairs $(\beta_i,\beta_j)$ where $|i-j|$ exceeds $2\max D(I,n)+1$. By construction, we have that $\frac{|K_n|}{|H_n|^2}>1-\frac{1}{4n^2}$.

For any $n\in \N$ and any pair $(a,a')\in D(I,n)$, we use the sum decomposition $a=\sum_{i=0}^{n-1} a_i$ and $a'=\sum_{i=0}^{n-1} a_i'$, where $a_i,a_i'\in H_i$ for every $i,\, 0\le i \le n-1$. Let $F_n$ denote the subset of $D(I,n)^2$ consisting of pairs $(a,a')\in D(I,n)^2$ such that for every odd $i\in \{1,\ldots,n-1\}$, $(a_i,a_i')\in K_i\subset H_i^2$. Then
\begin{align*}
\frac{|F_n|}{|D(I,n)|^2}&=\prod_{i=0}^{\lfloor \frac{n-1}{2}\rfloor} \frac{\left|K_{2i+1}\right|}{\left|H_{2i+1}\right|}\ge \prod_{i=0}^{\lfloor \frac{n-1}{2}\rfloor} \left(1- \frac{1}{4(2i+1)^2}\right)>\frac{1}{2}.
\end{align*}

Finally, let $(a,a')\in F_n$. Let $(d,d')\in D(I,n)$, where $a-a'\ne d-d'$. We will show $a-a'\neq d-d'+1$. Let $k$ be the highest integer such that $a_k-a_k'\ne d_k-d_k'$; such an integer clearly must exist between $0$ and $n-1$. By selection of $a$ and $a'$, if $k$ is odd, then $\left|a_k-a_k'-d_k+d_k'\right|>2\max D(I,k)+1$. Alternatively, if $k$ is even, then $a_k-a_k'$ and $d_k-d_k'$ must differ by at least $g_k > 2\max D(I,k)+1$. Hence, 
\begin{align*}
\left|a-a'-d+d'\right|>2\max D(I,k)+1-2\max D(I,k)=1
\end{align*}
So for any $n$, over half of the pairs $(a,a')$ in $D(I,n)^2$ have no complementary pair $(d,d')$ satisfying $a-a'=d-d'+1$. Thus, by Lemma \ref{L:kproduct}, $T_q\times T_q$ is not ergodic. 
\end{proof}

By a similar proof, the following lemma holds for strongly arithmetic rank-one transformations (where every height set has the staircase form). 

\begin{lemma}\label{L:arithmeticnotcons}
Let $T$ be a strongly arithmetic rank-one transformation with cutting sequence $(r_n)_{n =0}^\infty$, and let $K_n$ denote the subset of $H_n^k$ consisting of the pairs $(\beta_{i_1},\ldots,\beta_{i_k})$ where $\max_{\substack{1\le m,\ell \le k, \\m \ne \ell}} |i_m - i_\ell| > 2\max D(I,n)$ ($|K_n|$ is easy to calculate, as in Lemma \ref{L:two}). If $\prod_{j = 0}^\infty \left( 1- \frac{|K_j|}{r_j^k} \right) > 0,$ then $T^{(k)}$ is not conservative. 
\end{lemma}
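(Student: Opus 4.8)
The plan is to mirror the structure of the proof of Lemma~\ref{L:tqnotergodic}, replacing the alternating even/odd construction with the uniform staircase form and keeping track of $k$-tuples instead of pairs. Concretely, I would first record the combinatorial fact about a single staircase height set: if $(\beta_{i_1},\ldots,\beta_{i_k})$ is a $k$-tuple in $H_n$ with $\max_{m\neq \ell}|i_m-i_\ell|>2\max D(I,n)$, and $(\beta_{j_1},\ldots,\beta_{j_k})$ is any other $k$-tuple with $\beta_{i_1}-\beta_{j_1}\neq \beta_{i_m}-\beta_{j_m}$ for some $m$, then $|\beta_{i_1}-\beta_{j_1}-(\beta_{i_m}-\beta_{j_m})|>2\max D(I,n)$. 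This is proved exactly as in Lemma~\ref{L:tqnotergodic}: split into the case where the ``index differences'' $i_1-j_1$ and $i_m-j_m$ disagree (in which case the $h_n$-term dominates, using the analogue of \eqref{E:hineq} for strongly arithmetic transformations, which is exactly the growth of $h_n$ guaranteed by the staircase spacer count), and the case where they agree, which reduces to Lemma~\ref{L:tone} and forces the discrepancy to be $|i_1-j_1|\,|i_1-i_m|\geq |i_1-j_1|$, and $|i_1-j_1|$ is large because the $i$'s are spread out.

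Second, I would pass from one level to the full descendant set. Write $a=\sum_{m=0}^{n-1} a_m$, $d=\sum_{m=0}^{n-1} d_m$ with $a_m,d_m\in H_m$, and let $F_n\subset D(I,n)^k$ be the set of $k$-tuples $(a^{(1)},\ldots,a^{(k)})$ such that at \emph{every} stage $m<n$ the induced $k$-tuple $(a^{(1)}_m,\ldots,a^{(k)}_m)$ lies in $K_m$. By independence across stages, $|F_n|/|D(I,n)|^k=\prod_{m=0}^{n-1}(|K_m|/r_m^k)$, which by hypothesis is bounded below by a fixed positive constant $c>0$ for all $n$. Now take any $(a^{(1)},\ldots,a^{(k)})\in F_n$ and any $(d^{(1)},\ldots,d^{(k)})\in D(I,n)^k$ with $a^{(1)}-d^{(1)}\neq a^{(m)}-d^{(m)}$ for some $m$; letting $p$ be the \emph{highest} stage at which $a^{(1)}_p-d^{(1)}_p\neq a^{(m)}_p-d^{(m)}_p$, the stage-$p$ discrepancy is $>2\max D(I,p)$ by step one (note $(a^{(1)}_p,\ldots,a^{(k)}_p)\in K_p$ since the tuple is in $F_n$), while all lower stages contribute in absolute value at most $2\max D(I,p)$; hence $a^{(1)}-d^{(1)}\neq a^{(m)}-d^{(m)}$ — in fact the two differ by a nonzero amount — so in particular $a^{(1)}-d^{(1)}\neq a^{(m)}-d^{(m)}$ cannot be ``repaired,'' i.e.\ there is \emph{no} complementary $k$-tuple $(d^{(1)},\ldots,d^{(k)})$ with $a^{(1)}-d^{(1)}=a^{(m)}-d^{(m)}\neq 0$ for all $m$. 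Thus a fixed proportion $\geq c>0$ of the $k$-tuples in $D(I,n)^k$ admit no complementary tuple in the sense of Proposition~\ref{P:conservprods2}.

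Finally, I invoke the ``only if'' direction of Proposition~\ref{P:conservprods2}: since for the base $I$ of $C_0$ (or any fixed $C_i$) and every $j$ a uniformly positive fraction $c$ of $k$-tuples fail the complementation condition, the condition of Proposition~\ref{P:conservprods2} fails for $\ve<c$, so $T^{(k)}$ is not conservative.

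The main obstacle I anticipate is bookkeeping in step two: making precise the claim that the contribution of stages below $p$ to $|a^{(1)}-d^{(1)}-(a^{(m)}-d^{(m)})|$ is bounded by $2\max D(I,p)$, which requires the analogue of \eqref{E:hineq} — namely that $h_{p+1}$ (hence $\max D(I,p+1)$, and inductively $\max D(I,n)$) grows fast enough relative to the spread of triangular numbers at each stage — to be built into the definition of a strongly arithmetic rank-one transformation, or else to be imposed as an extra standing hypothesis. One must also be slightly careful that ``$p$ is the highest disagreeing stage'' genuinely isolates a stage-$p$ term of size $>2\max D(I,p)$ and not merely $\geq$ something smaller; this is where the strict inequality $\max_{m\neq\ell}|i_m-i_\ell|>2\max D(I,n)$ in the definition of $K_n$ does the work, exactly paralleling the role of $|i-j|>2\max D(I,n)+1$ in Lemma~\ref{L:tqnotergodic}.
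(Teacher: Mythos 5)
Your proposal is correct and follows essentially the paper's own argument, which is itself only a brief sketch: invoke Proposition \ref{P:conservprods2}, note (via Lemma \ref{L:tone}, exactly as in the proof of Lemma \ref{L:tqnotergodic}) that a $k$-tuple whose stage-$j$ components lie in $K_j$ for every $j$ admits no complementary tuple, and observe that such tuples occupy a fraction of $D(I,n)^k$ bounded below uniformly in $n$ by the hypothesized product. The two caveats you flag are present in the paper as well: the height-domination inequality playing the role of \eqref{E:hineq} is left implicit in its appeal to Lemma \ref{L:tqnotergodic}, and the identification of the hypothesized quantity $\prod_j\left(1-|K_j|/r_j^k\right)$ with the fraction $\prod_j |K_j|/r_j^k$ of everywhere-spread tuples is made silently there too (the statement evidently intends the complementary count in one of the two places).
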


\begin{proof}
By Proposition \ref{P:conservprods2}, it suffices to show that the fraction of $k$-tuples $(a_1,\ldots,a_k) \in D(I,n)^k$ without complementary pairs $(d_1,\ldots,d_k)$ satisfying $a_1 - d_1 = a_i - d_i,\, i = 2,\ldots,k$ is bounded below by some $\ve > 0$ for every $n$. As in the proof of Lemma \ref{L:tqnotergodic}, Lemma \ref{L:tone} guarantees that any $k$-tuple $(a_1,\ldots,a_k)$ with $(a_{1j},\ldots,a_{kj}) \in K_j \subset H_n$ for all $j = 0,\ldots, n-1$ will not have a complementary pair $(d_1,\ldots,d_k)$ with this property. But the fraction of such pairs is bounded bounded below by $\prod_{j = 0}^\infty \left( 1- \frac{|K_j|}{r_j^k} \right) > 0$.
\end{proof}

This has clear implications for high staircase transformations, which were conjectured to be power weakly mixing by the authors in \cite{DaRy11}. Specifically, by Lemma \ref{L:arithmeticnotcons}, when $(r_n)$ increases sufficiently fast, we can guarantee that $T\times T$ is not conservative for a strongly arithmetic rank-one $T$:

\begin{corollary} \label{C:notpwm} There exist high staircase transformations which do not have conservative cartesian squares.
\end{corollary}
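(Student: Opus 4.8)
The plan is to combine Corollary~\ref{C:highstaircaseDE}, Lemma~\ref{L:arithmeticnotcons}, and Theorem~\ref{T:conscuts} into a single explicit construction. Recall that a high staircase is the rank-one transformation with $r_n \to \infty$ and $i + z_n$ spacers over the $i$th subcolumn of $C_n$; its height set is $H_n = \{ i h_n + x_i + (\text{offset})_n : 0 \le i \le r_n - 1\}$, where $x_i$ is the $i$th triangular number. In particular, up to a global additive shift of each $H_n$ (which does not affect the descendant-set difference structure used in Lemma~\ref{L:arithmeticnotcons}), a high staircase is a strongly arithmetic rank-one transformation in the sense defined before Proposition~\ref{P:hardstairs}. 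Thus Lemma~\ref{L:arithmeticnotcons} applies directly: if we choose the cutting sequence $(r_n)$ so that $\prod_{j=0}^\infty \left(1 - \frac{|K_j|}{r_j^2}\right) > 0$, then $T \times T$ is not conservative, and a fortiori not ergodic.

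The key step is therefore to verify that the required inequality on $(r_n)$ is satisfiable \emph{simultaneously} with the high-staircase axioms, in particular with $r_n \to \infty$. First I would compute $|K_n|$ as in Lemma~\ref{L:two}: $K_n$ consists of the pairs $(\beta_i, \beta_j) \in H_n^2$ with $|i - j| > 2\max D(I,n)$, so writing $m_n := 2\max D(I,n) + 1$, we get $|H_n^2 \setminus K_n| = |D(r_n, m_n)|$ in the notation of Lemma~\ref{L:two}, which equals $2 m_n r_n - m_n^2 - m_n - r_n + 1 \le 2 m_n r_n$. Hence $1 - \frac{|K_n|}{r_n^2} \le \frac{2 m_n}{r_n}$. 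The subtlety is that $m_n = 2 \max D(I,n) + 1$ depends on $r_0, \dots, r_{n-1}$ (through $D(I,n) = H_0 + \dots + H_{n-1}$), so one must choose the $r_n$ recursively: having fixed $r_0, \dots, r_{n-1}$, the quantity $m_n$ is determined, and we then pick $r_n$ large enough (and larger than $r_{n-1}$, and at least $2$, to respect the high-staircase constraints) that $\frac{2 m_n}{r_n} < \frac{1}{2^{n+1}}$, say. Then $\sum_n \frac{2 m_n}{r_n} < \infty$, so $\prod_n \left(1 - \frac{|K_n|}{r_n^2}\right) \ge \prod_n \left(1 - \frac{2 m_n}{r_n}\right) > 0$, as needed, while still $r_n \to \infty$.

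With such a choice of $(r_n)$ in hand, the proof assembles as follows: the resulting $T$ is a high staircase, hence weak doubly ergodic by Corollary~\ref{C:highstaircaseDE} (so in particular it is a genuine transformation of the required type and not degenerate); Lemma~\ref{L:arithmeticnotcons} with $k = 2$ gives that $T \times T$ is not conservative, hence not ergodic; therefore $T$ does not have conservative cartesian square, and the high staircases satisfying this condition form the desired family. This contradicts the assertion in \cite{DaRy11} that high staircases are power weakly mixing (power weak mixing requires all cartesian powers to be ergodic, hence conservative).

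The main obstacle I anticipate is purely bookkeeping rather than conceptual: one must be careful that the high-staircase spacer prescription ($i + z_n$ spacers over subcolumn $i$) really does put $H_n$ in the strongly-arithmetic form up to an additive constant, so that Lemma~\ref{L:arithmeticnotcons} is legitimately invoked — in particular that the extra $z_n$ and the accumulated column heights only shift $H_n$ rigidly and do not disturb the difference set $H_n - H_n$ on which $K_n$ is defined. Once that identification is made cleanly, the recursive choice of $(r_n)$ making the infinite product positive is routine, and there is genuine freedom to also arrange $r_n \to \infty$, which is the only additional constraint distinguishing a high staircase from an arbitrary strongly arithmetic rank-one transformation.
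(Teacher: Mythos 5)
Your route is the same as the paper's: the corollary is obtained simply by applying Lemma~\ref{L:arithmeticnotcons} with $k=2$ to a staircase-type transformation whose cutting sequence $(r_n)$ is chosen recursively to grow fast relative to $\max D(I,n)$, while keeping $r_n\to\infty$ so the result is a high staircase. Your identification of the high-staircase height sets with the strongly arithmetic form is fine (note $0\in H_n$ always and the nonzero elements are $j(h_n+z_n)+x_{j-1}$, so this is literally the staircase form with $k_n=z_n-1$, $z_n\ge 2$; no additive shift is needed), and the recursive choice of $r_n$ is exactly the "increases sufficiently fast" of the paper.

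There is, however, a concrete slip in your verification. From your own bound $1-\frac{|K_n|}{r_n^2}\le\frac{2m_n}{r_n}<2^{-(n+1)}$ it follows that $\prod_n\left(1-\frac{|K_n|}{r_n^2}\right)=0$, not $>0$; the asserted inequality $\prod_n\left(1-\frac{|K_n|}{r_n^2}\right)\ge\prod_n\left(1-\frac{2m_n}{r_n}\right)$ flatly contradicts the line preceding it. What your choice of $(r_n)$ actually guarantees is $\sum_n\left(1-\frac{|K_n|}{r_n^2}\right)\le\sum_n\frac{2m_n}{r_n}<\infty$, i.e.\ $\prod_n\frac{|K_n|}{r_n^2}>0$, and that is the condition the proof of Lemma~\ref{L:arithmeticnotcons} really uses: the fraction of pairs in $D(I,n)^2$ all of whose stage-$j$ components lie in $K_j$ equals $\prod_j\frac{|K_j|}{r_j^2}$, and those are the pairs admitting no complementary pair. (The hypothesis of Lemma~\ref{L:arithmeticnotcons} as printed, $\prod_j\bigl(1-\frac{|K_j|}{r_j^k}\bigr)>0$, has $K_j$ and its complement interchanged, as comparison with the lemma's proof shows; taken literally it would force $\frac{|K_j|}{r_j^k}$ to be summable, which is the opposite of what a fast-growing $(r_n)$ produces.) So your construction is correct and does give a high staircase with non-conservative cartesian square, but the final inequality chain must be replaced by: $\sum_n\bigl(1-\frac{|K_n|}{r_n^2}\bigr)<\infty$, hence $\prod_n\frac{|K_n|}{r_n^2}>0$, and the lemma is then invoked in this corrected form.
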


\noindent However, it is possible that all strongly arithmetic rank-ones satisfying the \textit{restricted growth} condition used in  \cite{DaRy11} (i.e. $\lim_{n \rightarrow \infty} \frac{r_n^2}{r_0 r_1 \dots r_{n-1}} = 0$) are power weakly mixing, as examples which use Lemma \ref{L:arithmeticnotcons} can easily be seen to not satisfy this condition.

We now return to the main transformation of this section. We prove that there exist weak doubly ergodic transformations with conservative though not ergodic cartesian square. We note examples of tower rank-one transformations that are
weak doubly ergodic but with non-conservative cartesian square were constructed in \cite{BoFiMaSi01}. Also, as Aaronson has mentioned to the authors \cite{Aa13p}, for  $0<t<1$, if   $u_n=(n+1)^{-t} $, then $\{u_n\}$ is a Kaluza sequence, thus there  exists an
invertible, rational weak mixing Markov shift $T$ with a state 0 so that
$p_{0,0}^{(n)}=u_n.$ For $t>1/2$, $T\times T$ is not is not conservative. By \cite{DGPS15}, rational weak mixing
implies weak doubly ergodic, so this give another example of a double ergodic transformation with non-conservative cartesian square. Our examples below can also be chosen to be rigid (Theorem~\ref{T:dergodic2}). We note that rigid rank-one transformations with cartesian square were constructed in \cite{BaYa15}, and 
rigid rank-one transformations with  infinite ergodic index were constructed in \cite{AdSi15}.

\begin{theorem} There exists a weak doubly ergodic $T$ which is partially rigid but with $T\times T$ non-ergodic.  \label{T:dergodic}
\end{theorem}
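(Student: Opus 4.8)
The plan is to assemble $T$ from the pieces already developed in the excerpt. The transformation $T$ is the rank-one transformation defined just before the statement, whose height sets alternate between the two-point sets $H_n=\{0,g_n\}$ for even $n$ (with $g_n$ huge and $r_{n+1}$ chosen via Lemma~\ref{L:two} so that a $1-\frac{1}{4(n+1)^2}$-fraction of index pairs are widely separated, and with enough spacers on the last subcolumn so that \eqref{E:hineq} holds) and the staircase-type sets $H_n=\{\beta_0,\dots,\beta_{r_n-1}\}$ with $\beta_i=ih_n+x_i-1$ for odd $n$. So I would verify three things: (i) $T$ is weak doubly ergodic; (ii) $T\times T$ is conservative; and (iii) $T$ is partially rigid. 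Non-ergodicity of $T\times T$ is exactly Lemma~\ref{L:tqnotergodic}, already proved, so nothing more is needed there.

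For (i), the point is that the odd-stage height sets are precisely staircase blocks of the form required by the definition of an \textbf{arithmetic rank-one transformation}: taking $(n_i)$ to enumerate the odd integers, $a_{n_i}=0$, $k_{n_i}=1$, and $s_{n_i}=r_{n_i}$ (so $s_{n_i}/r_{n_i}=1>\tau$ for, say, $\tau=\tfrac12$), the set $S_{n_i}=\{0,\,h_{n_i}+2,\,2h_{n_i}+4,\dots\}$ — wait, one must be careful that the constant offsets match $\beta_i=ih_n+x_i-1$; since $x_i=\frac{i(i+1)}{2}$ and $k_{n_i}+q$ summed gives $\sum_{q=0}^{i-1}(1+q)=i+x_{i-1}=x_i$, the set $S_{n_i}$ as written is $\{0,h_{n_i}+1,2h_{n_i}+3,\dots\}$, which is $H_{n_i}+1$, a harmless translate, and translating all level heights does not change the transformation, so $T$ qualifies as arithmetic (with $r_{n_i}=r_{n_i}$ unbounded as arranged in the construction). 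Then Proposition~\ref{P:hardstairs} gives weak double ergodicity directly. Alternatively, and perhaps cleaner, I would invoke Proposition~\ref{P:hardstairs} after remarking that the definition of arithmetic rank-one only requires \emph{subsets} of the odd height sets of staircase shape, which is literally the case here.

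For (iii), I would apply Proposition~\ref{P:partialrigid} along the even stages. For $n$ even, $H_n=\{0,g_n\}$, so with $a_n=g_n$ we get $(a_n+H_n)\cap H_n=\{g_n\}$, hence $\frac{|(a_n+H_n)\cap H_n|}{|H_n|}=\frac12$; more precisely one should compute $\frac{|(g_n+H_n)\cap H_n|}{|H_n|}$ using $H_n+H_{n-1}+\cdots$ if one wants rigidity with a single shift across several even stages, but taking the sequence $n_m$ to be the even integers and $a_{n_m}=g_{n_m}$ already yields $\rho\ge\tfrac12$ by Proposition~\ref{P:partialrigid}, so $T$ is at least $\tfrac12$-partially rigid, in particular partially rigid. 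For (ii), conservativity of $T\times T$, I would note that $r_n=|H_n|$ grows: on even stages $|H_n|=2$ but on odd stages $r_n$ is unbounded, and in fact by choosing the odd-stage cut numbers to grow fast enough one can force $\prod_n(1-1/r_n)=0$ even accounting for the even stages contributing factors $1-\tfrac12=\tfrac12$ infinitely often — this product would then be $0$, so Theorem~\ref{T:conscuts} with $k=2$ gives $T\times T$ conservative. The one thing to check is that the freedom to choose $r_n$ on the odd stages has not already been exhausted by the constraints in the construction of $T$; since those constraints (Lemma~\ref{L:two} applied at stage $n+1$ for even $n$, and the spacer inequality \eqref{E:hineq}) only ask the cut numbers to be \emph{sufficiently large}, we are free to also demand $\sum_n 1/r_n=\infty$ fails, i.e. to make $r_n$ large enough that the tail products vanish; concretely $\sum_{n\text{ odd}} 1/r_n=\infty$ together with the even factors $\tfrac12$ gives divergence of $\sum(1/r_n + \text{const})$... the clean statement is: choose $r_n\to\infty$ on odd stages, then $\prod(1-1/r_n^{\,})$ over even stages alone is $0$ since each even factor is $\tfrac12$, so $\prod_{n=0}^\infty(1-1/r_n)=0$ automatically, and Theorem~\ref{T:conscuts} applies. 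Hence $T\times T$ is conservative but, by Lemma~\ref{L:tqnotergodic}, not ergodic, completing the proof.

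The main obstacle I anticipate is purely bookkeeping: making sure the single transformation $T$ simultaneously satisfies the hypotheses of Proposition~\ref{P:hardstairs} (arithmetic structure on odd stages, with $s_{n_i}/r_{n_i}$ bounded below and $r_{n_i}$ unbounded), Proposition~\ref{P:partialrigid} (the even-stage shift computation), Theorem~\ref{T:conscuts} (divergence of the cut-product, here free since even factors are $\tfrac12$), and Lemma~\ref{L:tqnotergodic} (the separation condition $|i-j|>2\max D(I,n)+1$ for a $1-\frac{1}{4n^2}$-fraction of odd-stage pairs, and the size constraint $g_n>2\max D(I,n)+1$ on even stages, plus \eqref{E:hineq}). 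All of these are "choose the parameter large enough" conditions that do not conflict, so the proof is really an orchestration: state the construction (already given), then cite each of the four results in turn. I would keep the write-up short, essentially: "$T$ is the transformation constructed above; it is arithmetic rank-one hence WDE by Proposition~\ref{P:hardstairs}; it is partially rigid by Proposition~\ref{P:partialrigid} applied with $n_m$ the even stages and $a_{n_m}=g_{n_m}$; and $T\times T$ is not ergodic by Lemma~\ref{L:tqnotergodic}."
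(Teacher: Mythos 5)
Your proposal is correct and follows essentially the same route as the paper's proof: weak double ergodicity via Proposition \ref{P:hardstairs}, at least $\tfrac12$-partial rigidity via Proposition \ref{P:partialrigid} applied to the even height sets $H_{2n}=\{0,g_{2n}\}$ with $a_{2n}=g_{2n}$, and non-ergodicity of $T\times T$ via Lemma \ref{L:tqnotergodic}. Your additional conservativity argument through Theorem \ref{T:conscuts} (the even stages alone force $\prod(1-1/r_n)=0$) is correct but not required by the statement, and in any case conservativity of the products already follows from partial rigidity, which is how the paper treats it.
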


\begin{proof}
$T$ is weak doubly ergodic by Proposition \ref{P:hardstairs} and at least $\frac{1}{2}$-partially rigid by application of Proposition \ref{P:partialrigid} to the even height sets $H_{2n},\, n \in \N$, with $a_{2n} = \gamma_{2n}$. In addition, $T \times T$ is not ergodic by Lemma \ref{L:tqnotergodic}.  
\end{proof}

Theorem \ref{T:dergodic} can be extended to fully rigid transformations, as we show next.

\begin{theorem}There exists a transformation $T$ which is rigid and weak doubly ergodic, but with $T\times T$ not ergodic. \label{T:dergodic2}
\end{theorem}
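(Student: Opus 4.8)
The plan is to reuse the construction from Lemma~\ref{L:tqnotergodic}, modified so that rigidity rather than mere partial rigidity is forced, while keeping both the arithmetic (staircase) structure on the odd levels and the separation structure on the even levels intact. Recall that weak double ergodicity of that transformation was obtained in Theorem~\ref{T:dergodic} purely from Proposition~\ref{P:hardstairs}: the odd height sets $H_n$ are exactly of staircase form $\{0, h_n+1, 2h_n+3,\dots\}$, so if we preserve that form on infinitely many odd $n$ (with $r_n$ unbounded and $s_n/r_n$ bounded below, e.g. $s_n=r_n$), the transformation remains arithmetic rank-one and hence WDE regardless of what we do on the even levels. Likewise, non-ergodicity of $T\times T$ in Lemma~\ref{L:tqnotergodic} used only that on even $n$ we have $H_n=\{0,g_n\}$ with $g_n$ large relative to $\max D(I,n)$ and that a density-$1$ fraction of pairs in the odd height sets are ``spread out'' by more than $2\max D(I,n)+1$; both of these are compatible with making the construction rigid.

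The key step is therefore to replace the two-element even height sets by longer arithmetic-progression-like height sets that force full rigidity via Proposition~\ref{P:partialrigid}. Concretely, for even $n$ I would set $H_n$ to be an arithmetic progression $\{0, g_n, 2g_n, \dots, (t_n-1)g_n\}$ of length $t_n$, where $g_n$ is still chosen larger than $2\max D(I,n)+1$ (so the ``separation'' argument of Lemma~\ref{L:tqnotergodic} on the even coordinates goes through verbatim: two distinct elements of such a progression differ by a multiple of $g_n$, hence by more than $2\max D(I,n)+1$), and where $t_n\to\infty$ is chosen to grow fast enough. For an arithmetic progression $H$ of length $t$ and common difference $g$, the shift $a=g$ satisfies $|(a+H)\cap H| = t-1 = (1-1/t)|H|$, so along the subsequence of even $n$ with $a_n=g_n$ we get $\lim |(a_n+H_n)\cap H_n|/|H_n| = \lim (1-1/t_n) = 1$, and Proposition~\ref{P:partialrigid} yields that $T$ is at least $1$-partially rigid, i.e.\ rigid. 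One must also re-verify that the counting in Lemma~\ref{L:tqnotergodic} still produces $|K_n|/|H_n|^2 > 1 - 1/(4n^2)$ for the odd levels and $|F_n|/|D(I,n)|^2 > 1/2$; this is unchanged since the even coordinates now contribute only the harmless requirement that $a_i - a_i'$ and $d_i - d_i'$ differ by a nonzero multiple of $g_i$ when they differ at all, which still exceeds $2\max D(I,i)$.

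The remaining bookkeeping is the order of quantifiers in the inductive construction: at stage $n$ we must choose, in sequence, the data $(r_n, s_n, g_n, t_n)$ (and the number of spacers on the last subcolumn so that \eqref{E:hineq}-type height inequalities hold) so that every previously imposed ``largeness'' requirement and every density bound is met. Since each new requirement is of the form ``choose the next parameter large enough,'' and the parameters at stage $n$ only need to dominate quantities determined by stages $<n$, this can always be arranged — exactly as in the construction preceding Lemma~\ref{L:tqnotergodic}. Finally, rigidity of a conservative ergodic infinite measure-preserving transformation makes all its Cartesian products conservative, so $T$ has infinite conservative index, as remarked in the introduction.

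The main obstacle is checking that lengthening the even height sets does not spoil the non-ergodicity proof: one needs that when the highest coordinate of disagreement $k$ between $(a,a')$ and $(d,d')$ is even, the quantity $|a_k - a_k' - d_k + d_k'|$ is still forced to exceed $2\max D(I,k)$. With $H_k$ an arithmetic progression of difference $g_k$ this holds because $a_k-a_k'-(d_k-d_k')$ is a nonzero integer multiple of $g_k$ and $g_k > 2\max D(I,k)+1$; so this obstacle dissolves as long as $g_k$ is chosen with the same lower bound used for the two-point sets. Everything else is routine adaptation of the earlier arguments.
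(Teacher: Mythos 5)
Your proposal is correct and follows essentially the same route as the paper: the paper's proof also replaces the even height sets by arithmetic progressions of growing length (there $H_n=\{0,\gamma_n,\dots,(n-1)\gamma_n\}$ with $\gamma_n=2h_n>2\max D(I,n)+1$), invokes Proposition~\ref{P:partialrigid} for rigidity and Proposition~\ref{P:hardstairs} for weak double ergodicity, and reruns the Lemma~\ref{L:tqnotergodic} argument for non-ergodicity of $T\times T$. Your explicit check that distinct differences within an even-stage progression still differ by a nonzero multiple of $g_k>2\max D(I,k)+1$ is exactly the point the paper leaves to ``methods already employed.''
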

\begin{proof}
For $n$ even, set $H_n = \big\{0,\gamma_n,2\gamma_n,\ldots,(n-1)\gamma_n\big\}$, where $\gamma_n= 2h_n$. Thus, the height set sequence contains a subsequence of arithmetic progressions of increasing length, so Proposition \ref{P:partialrigid} implies rigidity. 

As was the case in Theorem \ref{T:dergodic}, for $n$ even, add enough spacers on the rightmost subcolumn of column $C_n$ for $n$ even in order for 
\[h_{n+1}>\left\{2\sum_{i=1}^{r_n-2} i + 2\max D(I,n+1)+1\right\},\] where we have selected a number of cuts $r_{n+1}$ satisfying equation (\ref{E:distanceinequality}) for $m = 2\max D(I,n+1)+1$. Then, set $H_{n+1}$ equal to the staircase height set with $r_{n+1}$ subcolumns.
The argument is finished by methods already employed. 
\end{proof}

\section{Weaker Notions of Mixing}\label{S:EIC}

\subsection{Weak Double Ergodicity Implies EIC} 

In \cite{GlWe15}, Glasner and Weiss studied, for nonsingular actions of locally compact second countable groups, several conditions that are weaker that ergodicity of the cartesian square and stronger that ergodicity of the action. In this section we show that weak double ergodicity is stronger than ergodicity with isometric coefficients (EIC). This is the only section where we consider nonsingular group actions.

Let $G$ be a locally compact second countable topological group. A (Borel) $G$-action on $(X,m)$ is {\bf weak doubly ergodic} if for all sets $A, B\subset X$ of positive measure there exits $g\in G$ such that $m(T_g A\cap A)>0$ and $m(T_g A\cap B)>0$. As mentioned in \cite{GlWe15}, double ergodicity has also been used to mean ergodicity of the diagonal action on the cartesian square (see \cite{GlWe15} and references therein); this is different from double ergodicity as defined in \cite{BoFiMaSi01} and in order to differentiate the notions we are using weak double ergodicity for the notion in \cite{BoFiMaSi01}.

The following lemma is a straightforward generalization of Proposition 2.1 from \cite{BoFiMaSi01}, which handles the case of integer actions. The proof is essentially the same. 

\begin{lemma} \label{L:kconserg}
Let $\{T_g\}_{g \in G}$ be a group action on $\sigma$-finite space $X$; then $T$ is weak doubly ergodic if and only if for every $k\in\N$ and positive measure sets $A_i,B_i,i=1\ldots,k$ there exists $g\in G$ such that $\mu(T_g A_i \cap B_i)>0$ for all $i=1,\ldots k$.
\end{lemma}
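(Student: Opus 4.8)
The plan is to prove the biconditional in Lemma~\ref{L:kconserg} directly, mimicking the integer-action argument of \cite[Proposition 2.1]{BoFiMaSi01}. The backward direction is immediate: the $k$-fold condition with $k=2$, $A_1 = A_2 = A$, $B_1 = A$, $B_2 = B$ yields a single $g\in G$ with $m(T_g A\cap A)>0$ and $m(T_g A\cap B)>0$, which is precisely weak double ergodicity. So the content is the forward direction.

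For the forward direction I would argue by induction on $k$. The base case $k=1$ is just ergodicity (which follows from weak double ergodicity by taking $B=A$ and noting that the orbit of any positive-measure set meets every positive-measure set). For the inductive step, suppose the statement holds for $k-1$ and we are given positive-measure sets $A_1,\dots,A_k$ and $B_1,\dots,B_k$. The key device, exactly as in \cite{BoFiMaSi01}, is to first apply weak double ergodicity (or rather the $k=1$ consequence, ergodicity, together with a small piece of the $k=2$ hypothesis) to find a single group element $g_0$ and positive-measure subsets that let us ``merge'' two of the requirements into one. Concretely: by ergodicity choose $h\in G$ with $m(A_k \cap T_h B_{k-1})>0$; set $A_k' = A_k \cap T_h B_{k-1}$, a positive-measure set. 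Then run the induction on the $k-1$ pairs $(A_1,B_1),\dots,(A_{k-2},B_{k-2}),(A_{k-1}, (T_h)^{-1}A_k')$ — wait, one must be a little more careful about which sets get paired; the correct bookkeeping is to use weak double ergodicity to produce $g_0$ with $m(T_{g_0}A_{k-1}\cap A_{k-1})>0$ and $m(T_{g_0}A_{k-1}\cap C)>0$ for an appropriate auxiliary set $C$ built from $A_k,B_k$, then intersect and feed the resulting smaller family of $k-1$ pairs into the inductive hypothesis, finally composing the two group elements. The noncommutativity of $G$ is harmless here because at each stage we only ever intersect a set with a \emph{single} translate and then translate the whole configuration, so the group elements compose on one side in a fixed order.

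The main obstacle — really the only place requiring care — is the precise inductive setup: one must choose the auxiliary sets and the pairing so that (i) all intermediate sets have positive measure, which is where weak double ergodicity (not merely ergodicity) is essential, since we need a \emph{common} $g_0$ simultaneously hitting two targets, and (ii) the final composition $g = g_0 g_1$ (in whatever order the construction dictates) genuinely satisfies $m(T_g A_i\cap B_i)>0$ for all $i=1,\dots,k$ after unwinding the translations. Since the excerpt explicitly says ``the proof is essentially the same'' as the integer case, I would write it at that level of detail: state the induction, carry out the merging step invoking Lemma-level weak double ergodicity once per step to combine two conditions into one, apply the inductive hypothesis, and compose. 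No properties of $G$ beyond being a group (the topological/local-compactness hypotheses play no role in this particular lemma) are needed, so I would not invoke them.
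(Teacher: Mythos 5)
Your overall plan matches the paper's, to the extent the paper has one: the authors give no argument at all beyond the remark that the statement is a straightforward generalization of \cite[Proposition 2.1]{BoFiMaSi01} and that "the proof is essentially the same," and your backward direction (specialize to $k=2$, $A_1=A_2=B_1=A$, $B_2=B$) is correct and trivial. So the issue is entirely with the forward direction, which is the only mathematical content the lemma records.

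There the sketch has a genuine gap: the merging step is never actually produced, and the one concrete attempt you write down does not work. If you set $A_k'=A_k\cap T_hB_{k-1}$ and apply the inductive hypothesis to the pairs $(A_1,B_1),\dots,(A_{k-2},B_{k-2}),(A_{k-1},T_h^{-1}A_k')$, then unwinding the resulting $g$ gives $m(T_gA_i\cap B_i)>0$ for $i\le k-2$, $m(T_gA_{k-1}\cap B_{k-1})>0$, and $m(T_{hg}A_{k-1}\cap A_k)>0$; the set $B_k$ never enters, so the $k$-th requirement $m(T_gA_k\cap B_k)>0$ is not obtained, and the element attached to the pair involving $A_k$ is $hg$ rather than the common $g$ needed for simultaneity. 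You acknowledge this ("one must be a little more careful about which sets get paired") and then defer to "the correct bookkeeping" with an unspecified auxiliary set built from $A_k,B_k$ — but arranging that all $k$ conditions hold for literally the same group element after the various intersections with translates is precisely the crux of the BFMS argument, since translating a pair and later undoing the translation conjugates (or one-sidedly multiplies) the element attached to that pair. For the same reason, the assertion that noncommutativity of $G$ is "harmless" is claimed rather than checked: one must verify that the induction can be organized so that every pair's conclusion involves the identical $g\in G$, which is exactly the alignment your sketch leaves open. As it stands, the proposal identifies the right strategy (imitate the integer-action induction) but does not supply the step that makes the lemma true.
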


A nonsingular action $\{T_g\}_{g\in G}$ is said to be {\bf ergodic with isometric coefficients (EIC)},
see \cite{GlWe15} and references therein, if every factor map (i.e., Borel equivariant map) 
$\phi: X \to Z$ where $(Z,d)$ is a separable metric space and where the factor action acts by isometries is constant a.e. In \cite{GlWe15} the authors show that ergodic cartesian square implies EIC, and that EIC implies weak mixing (which is defined analogously to weak mixing for the integer action case--they also consider other notions which we do not address in this paper). In this section we first observe that ergodic cartesian square clearly implies weak double ergodicity and in Proposition~\ref{P:WDE-EIC} prove that weak double ergodicity implies EIC. In \cite{GlWe15} it is also shown that there exist infinite measure-preserving $\Z$ actions $T$  that are EIC but not ergodic cartesian square. The proof that $T$ is not ergodic cartesian square is obtained by showing its cartesian square is not conservative. 

In \cite{BoFiMaSi01}, the authors construct infinite measure-preserving rank one transformations $S$ such that $S$ is (weak) doubly ergodic, hence EIC by \ref{P:WDE-EIC}, but $S\times S$ is not conservative, hence not ergodic. The examples we construct here give integer actions that are EIC (as they are weakly double ergodic) with conservative but not ergodic cartesian square.  The equivalence of EIC with weak double ergodicity is left open.

\begin{proposition}\label{P:WDE-EIC}  Let $\{T_g\}_{g\in G}$ be a nonsingular properly ergodic action on $(X,m)$. If $\{T_g\}$ is weakly doubly 
ergodic, then it is ergodic with isometric coefficients (EIC). 
\end{proposition}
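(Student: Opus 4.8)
The plan is to argue by contradiction: suppose $\phi\colon X\to Z$ is a factor map onto a separable metric space $(Z,d)$ on which the factor action $\{S_g\}$ acts by isometries, and suppose $\phi$ is not a.e.\ constant. I would first push the measure $m$ forward to a measure $\nu=\phi_*m$ on $Z$; since $\phi$ is not essentially constant, $\nu$ is not a point mass, so there exist two disjoint closed balls $B_1,B_2\subset Z$ of positive $\nu$-measure, say of radius $\eta$ and centers at distance more than $3\eta$ apart. Set $A=\phi^{-1}(B_1)$ and $B=\phi^{-1}(B_2)$; these are positive-measure subsets of $X$. The key point is that because $\{S_g\}$ acts by isometries on $(Z,d)$, the $d$-diameter of $S_g(B_i)$ equals that of $B_i$ for every $g$, and more usefully, for any set $E\subset X$ with $\phi(E)$ of small diameter, $\phi(T_gE)=S_g\phi(E)$ also has small diameter.

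Next I would apply weak double ergodicity to the pair $A,B$: there is $g\in G$ with $m(T_gA\cap A)>0$ and $m(T_gA\cap B)>0$. Actually I expect it is cleaner to invoke Lemma~\ref{L:kconserg} with $k=2$ to get a single $g$ with $m(T_gA\cap A)>0$ \emph{and} $m(T_gA\cap B)>0$ simultaneously — which is exactly what WDE gives directly for this pair, but the lemma packages it. From $m(T_gA\cap A)>0$, pick a point $x$ in that intersection (off a null set where the equivariance $\phi\circ T_g=S_g\circ\phi$ fails); then $\phi(x)\in B_1$ and $\phi(T_g^{-1}x)=S_g^{-1}\phi(x)\in B_1$, so $\phi(x)$ and $S_g^{-1}\phi(x)$ both lie in $B_1$, hence $d(\phi(x),S_g^{-1}\phi(x))\le 2\eta$. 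Since $S_g$ is an isometry, this gives $d(S_g\phi(x),\phi(x))\le 2\eta$ for such $x$, i.e.\ $S_g$ moves $\nu$-almost every point of $B_1$ by at most $2\eta$. But from $m(T_gA\cap B)>0$ we similarly get a point $y$ with $\phi(y)\in B_2$ and $S_g^{-1}\phi(y)\in B_1$, so $d(S_g\phi(z),\phi(z))$ for $z=S_g^{-1}\phi(y)\in B_1$ equals $d(\phi(y), S_g^{-1}\phi(y))\ge \operatorname{dist}(B_2,B_1)>3\eta$. This contradicts the previous bound of $2\eta$, since $S_g$ is a single isometry and cannot move some points of $B_1$ by $\le 2\eta$ while forcing the particular point $z\in B_1$ to move by $>3\eta$ — wait, one must be careful that $z$ genuinely lies in $B_1$ and is covered by the first estimate.

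The main obstacle, and the step I would spend the most care on, is exactly this last matching: making sure the point whose image is pushed out of $B_1$ into $B_2$ by $S_g$ is the same kind of point covered by the "moves by $\le 2\eta$" estimate, i.e.\ handling the almost-everywhere qualifiers and the direction of $g$ versus $g^{-1}$ consistently. The clean way is: choose the balls with $\operatorname{dist}(B_1,B_2)>2\eta$ where $\eta$ bounds diameters; from $m(T_gA\cap A)>0$ deduce that a positive-$\nu$-measure set of $w\in B_1$ satisfies $S_gw\in B_1$ (taking $w=\phi(T_g^{-1}x)$), hence are moved a distance $\le 2\eta$; from $m(T_gA\cap B)>0$ deduce a positive-$\nu$-measure set of $w\in B_1$ satisfies $S_gw\in B_2$, hence are moved a distance $>2\eta$. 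Now invoke that $S_g$ is an isometry: the function $w\mapsto d(w,S_gw)$ restricted to $B_1$ is $2$-Lipschitz, but we have shown it takes values $\le 2\eta$ on one positive-measure set and $>2\eta$ on another — that alone is not yet a contradiction for a mere Lipschitz function. So the truly essential use of the isometry property must come in differently: I would instead argue that $d(S_g w, S_g w')=d(w,w')\le 2\eta$ for all $w,w'\in B_1$, while picking $w\in B_1$ with $S_gw\in B_1$ and $w'\in B_1$ with $S_gw'\in B_2$ gives $d(S_gw,S_gw')\ge\operatorname{dist}(B_1,B_2)>2\eta$, a direct contradiction. This is the heart of the argument and mirrors the integer-action proof of \cite[Proposition 2.1]{BoFiMaSi01}; the remaining work is routine measure-theoretic bookkeeping (null sets for equivariance, regularity of $\nu$ to extract the balls), and the locally-compact-second-countable hypothesis on $G$ is needed only to know such factor maps and the relevant measurable structure behave well.
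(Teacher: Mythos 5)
Your argument is correct and is essentially the paper's own proof: both take two distinct points in the support of $\phi_*m$, pull back two well-separated positive-measure balls, apply weak double ergodicity to get a single $g$ whose image of one preimage meets both, and then contradict the separation using the fact that the isometry $S_g$ preserves the (small) diameter of the first ball. The only cosmetic difference is that you apply WDE upstairs to the preimages and push points down via a.e.\ equivariance, whereas the paper notes that factors of WDE actions are WDE and argues directly in $Z$; your final isometry contradiction (after discarding the unnecessary Lipschitz detour) matches the paper's triangle-inequality step.
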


\begin{proof}
Let $\phi: (X,T_g,m)\to (Z,S_g,\phi*m)$ be a factor map, where we can assume the metric space $(Z,d)$ is separable and $S_g$ is an isometry 
for each $g\in G$.  Set $m'=\phi*m$. Let $x, y$ be points in the support of the measure $m'$, which we may assume are distinct. Let $r=d(x,y)>0$. As all (positive radius) balls centered at $x$ and $y$ have positive measure, and factors are also weakly doubly 
ergodic, there exists $g\in G$ such that 
$m'(S_g B(x,r/4)\cap B(x,r/4)>0$ and $m'(S_g B(x,r/4)\cap B(y,r/4)>0$. As $S_g$ is an isometry,
$S_g(B(x,r/4)$ is a ball also of radius $r/4$, contradicting the triangle inequality. Therefore the factor is trivial and the action $\{T_g\}$ is EIC.
\end{proof}

\subsection{Measurable Sensitivity}

A related notion to EIC that we discuss now only for transformations is W-measurable sensitivity, which is defined in \cite{GrIo12}. A transformation is said to be \textbf{W-measurably sensitive} if for all $\mu$-compatible metrics $d$ (that is, all metrics which satisfy $\ve>0 \implies \mu(B(x,\ve))> 0$ for a.e.\ $x$), there is a $\delta> 0$ such that for almost every $x \in X$, 
\begin{align} \label{E:wsens}
\limsup_{n\rightarrow \infty} d(T^n x,T^n y) >\delta
\end{align}
for almost every $y \in X$. 
A classification result in Theorem 1, \cite{GrIo12} shows that all conservative ergodic nonsingular transformations $T$ are W-measurably sensitive or isomorphic mod $\mu$ to an invertible minimal uniformly rigid isometry on a Polish space. A transformation is called $\mathbf{L}_{\boldsymbol{\infty}}$-\textbf{weak mixing} if the space $L_\infty(X,\mu)$ has no nontrivial invariant subspaces of finite dimension. We show that $W$-measurable sensitivity need not imply $L_\infty$ weak mixing, which is one of the weakest notions of mixing in the infinite measure case (in integer actions, this is equivalent to weak mixing: see Remark 0.2 of \cite{GlWe15}).  Theorem 1 of \cite{GlWe15} gives the following useful chain of implications, into which we introduce WDE: 
\begin{align*}
T\times T \text{ ergodic} \implies \text{WDE} \implies \text{EIC} \implies \text{WM} \implies L_\infty \text{-WM} 
\end{align*}

\begin{proposition}\label{P:EIC-LYM}   Let $T$ be a conservative ergodic nonsingular transformation on $(X,m)$. If $T$ is  EIC then it is $W$-measurably sensitive. However, Li-Yorke M-Sensitivity does not imply $L_\infty$-weak mixing.  
\end{proposition}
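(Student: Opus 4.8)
The plan is to prove the two assertions in turn, the first from the classification theorem recalled above and the second by an explicit example.

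For the implication \emph{EIC $\Rightarrow$ $W$-measurable sensitivity}, I would argue by contradiction. Suppose $T$ is conservative ergodic nonsingular and not $W$-measurably sensitive. By Theorem~1 of \cite{GrIo12}, $T$ is then isomorphic mod $\mu$ to an invertible minimal uniformly rigid isometry $S$ on a Polish space $(Z,d)$; let $\phi\colon X\to Z$ implement this isomorphism. Then $\phi$ is a factor map onto a separable metric space on which the factor action $S$ acts by isometries, so the EIC hypothesis forces $\phi$ to be $m$-a.e.\ constant. But then $\phi_*m$ is a point mass, contradicting the fact that $\phi_*m$ is nonatomic (as $m$ is nonatomic and $\phi$ is an isomorphism). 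Hence $T$ must be $W$-measurably sensitive.

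For the second assertion I would exhibit a conservative ergodic transformation that is Li-Yorke $M$-sensitive (and also $W$-measurably sensitive, which incidentally records the remark preceding the proposition) but not $L_\infty$-weak mixing. Take $T = T_0 \times V$, finite measure-preserving, where $T_0$ is a mixing measure-preserving transformation (say a Bernoulli shift) on a probability space $(Y,\nu)$ and $V$ is the rotation $i \mapsto i+1 \pmod 2$ on $\{0,1\}$ with counting measure. Then $T$ is conservative, and ergodic: an invariant $f$ has $f(\cdot,0)$ invariant under $T_0^2$, which is ergodic since $T_0$ is mixing, so $f(\cdot,0)$ is constant and then $f(\cdot,1) = f(T_0\,\cdot\,,0)$ equals the same constant. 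Next, $T$ is not $L_\infty$-weak mixing, because the bounded nonconstant function $g(y,i) = (-1)^i$ satisfies $Tg = -g$, so $\mathbb{R}g$ is a one-dimensional nontrivial $T$-invariant subspace of $L_\infty$. Finally, $T$ is not isomorphic mod $\mu$ to a uniformly rigid isometry, since such an isomorphism would make $T$, hence its factor $T_0$, (measure-theoretically) rigid, contradicting the mixing of $T_0$; by the dichotomy of \cite{GrIo12} this already gives that $T$ is $W$-measurably sensitive. Li-Yorke $M$-sensitivity of $T$ then follows by a direct verification of its defining condition. Together with the previous point, $T$ is a Li-Yorke $M$-sensitive transformation that is not $L_\infty$-weak mixing, and a fortiori not EIC.

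The substantive part is the first implication, which is essentially a one-step consequence of the \cite{GrIo12} dichotomy once the non-atomicity of the factor measure is used to rule out the isometric case. In the second part, everything except the last claim (Li-Yorke $M$-sensitivity of $T_0 \times V$) is routine spectral bookkeeping; the delicate point there is that a $\mu$-compatible metric may "see" the $\mathbb{Z}/2$-rotation factor of $T$ and keep the orbits of points in the two fibers permanently separated, so the verification must use the correct form of the Li-Yorke condition (scrambled partners of positive, not full, measure, as in the topological notion) together with the mixing of $T_0$ within each fiber. This is where I would expect the main effort to lie.
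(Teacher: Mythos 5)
Your first implication is essentially the paper's own argument: both invoke the dichotomy of Theorem 1 of \cite{GrIo12} and observe that the isometric alternative would yield a nonconstant factor map onto a separable metric system acted on by isometries, contradicting EIC; your added remark that $\phi_*m$ would be a point mass while $m$ is nonatomic just makes the contradiction explicit. For the second claim you use essentially the paper's example as well: the paper takes $S(t,x)=(t+1 \bmod 2,\,Tx)$ on $\{0,1\}\times X$ with $T^2\times T^2$ ergodic, and your $T_0\times V$ with $T_0$ mixing is a special case. Where you genuinely differ is the verification of W-measurable sensitivity: the paper argues directly, using ergodicity of $S^2\times S^2$ on each of the four sets $X_s\times X_t$ and recurrence into a $\delta$-separated set $A_{st}$, whereas you get it from the \cite{GrIo12} dichotomy, since an isomorphism with a uniformly rigid isometry would force rigidity of $T$ and hence of its mixing factor $T_0$. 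That shortcut is correct (for a finite measure-preserving example the passage from uniform rigidity of the isometric model to measure-theoretic rigidity is standard), and your eigenfunction $g(y,i)=(-1)^i$ gives the failure of $L_\infty$-weak mixing directly, where the paper goes through the rotation factor.

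The gap is the one you flag yourself: Li-Yorke measurable sensitivity of the example is never verified; you defer it as ``a direct verification'' and even identify the obstruction (a $\mu$-compatible metric that keeps the two fibers at definite distance prevents pairs in opposite fibers from ever being proximal, so any ``almost every pair'' formulation fails for this example, and one must work with a positive-measure scrambled set inside a single fiber using the mixing of $T_0^2$). Since Li-Yorke M-sensitivity is precisely the property named in the statement, your proposal as written does not establish the second assertion. You should know, however, that the paper's own proof does no more: it only proves W-measurable sensitivity of $S$ together with failure of ($L_\infty$-)weak mixing and never addresses Li-Yorke M-sensitivity, so your argument proves exactly as much as the published one. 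A complete proof of the literal statement would require either carrying out the deferred Li-Yorke verification under the precise definition in the literature, or reading the second claim with ``W-measurable sensitivity'' in place of ``Li-Yorke M-sensitivity,'' which is what both your argument and the paper's actually deliver.
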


\begin{proof}
If $T$ is EIC but not $W$-measurably sensitive, then Theorem 1 in \cite{GrIo12} shows that it must be isomorphic mod $\mu$ to an isometry on a Polish space, so taking $\phi$ as the isomorphism shows that $T$ is not EIC. The resulting metric space is separable by $\mu$-compatibility of the metric (Lemma 5.3 of \cite{GrIo12}), and the fact that any $\mu$-compatible psuedometric is separable (Proposition 2.1, \cite{GrIo12}).

To show that the converse does not hold, consider a transformation for which $T^2 \times T^2$ is ergodic nonsingular on a measure space $X$. Let $Y = \{0,1\} \times X$ and the transformation $S$ on $Y$ be defined by $S(t,x) = (t+1\mod{2},T(x))$. Letting $X_0 = \{0\} \times X$, $X_1 = \{1\} \times X$, then clearly $S^2 \times S^2$ is ergodic on each element of the partition $\{X_0 \times X_0, X_1 \times X_1, X_0 \times X_1,X_1 \times X_0\}$. The transformation $S^2 \times S^2$ is also closed on each of these sets. Now there exists a $\delta > 0$ such that for any $s,t \in \{0,1\}$, there exists a positive measure set of points $A_{st}\subset X_s \times X_t$ such that $d(x,y) > \delta$ for all $(x,y) \in A_{st}$. Under conservative ergodicity and nonsingularity, $\bigcup_{n \ge N} (S^2\times S^2)^n (B) = X_s \times X_t\mod{\mu}$ for any positive measure set $B_{ts}$ and any $N \in \N$. In particular, almost every point of $X_s \times X_t$ is sent to $A_{st}$ infinitely often, so indeed $S$ is W-measurably sensitive on $Y$. However, $S$ has a rotation on two points as a factor, so it fails to be weak mixing, and thus $L_\infty$-weak mixing, on the same set. 

\end{proof}

\subsection{Koopman Mixing}

It now follows by Theorem~\ref{T:dergodic2} that there exist $\Z$ actions that are EIC 
and with infinite conservative index but with non-ergodic cartesian square. This example 
is different from the examples in \cite{GlWe15}. We note that
in \cite{GlWe15}, the authors construct three examples that are EIC but with non-ergodic cartesian square (not doubly ergodic in the notation of \cite{GlWe15}).
The first example in \cite[3.5]{GlWe15} is a nonsingular action of a nonabelian group which is  SAT (a property that does not hold for nontrivial actions of abelian groups \cite[3.2]{GlWe15}), hence EIC, but not ergodic cartesian square. It is interesting to note that by  Lemma~\ref{L:kconserg}, the proof in \cite[3.5]{GlWe15} 
also shows the action is not weak doubly ergodic. Similarly, the example in \cite[5.1]{GlWe15} is a nonabelian action which is EIC and for which the proof in \cite[5.1]{GlWe15} shows it is not weak doubly ergodic. Thus these two examples are EIC and not weak doubly ergodic but for nonabelian actions. The third example in \cite[Proposition 7.1]{GlWe} is a $\Z$ action  $T$ that is EIC 
but is such that  $T\times T$ is not conservative, hence not ergodic, thus different from our example which has infinite conservative index.

We conclude this section with an example of an ergodic and Koopman mixing transformation that is not EIC.
An infinite measure-preserving  transformation $T$ is {\it Koopman mixing} if for all sets $A,B$ of finite measure 
$\lim_{n\to\infty}\mu(T^nA\cap B)=0$. This notion was defined by Hajian and Kakutani as zero-type, see \cite{EHIP14}. We recall that  the Koopam operator $U$ on $L^2$ is defined by $U(f)=f\circ T$; then mixing for a finite measure-preserving transformation is equivalent to the fact that $U^n$ converges weakly to 0 in the orthocomplement of the constants. When the measure is infinite, as 0 is the only constant in $L^2$, this condition is equivalent to Koopman mixing; this has been called mixing in other works, see e.g. \cite{DaRy11}. In rank-one transformations, the following property guarantees that a transformation is not EIC.

\begin{lemma} \label{L:notEIC}
Let $T$ be a rank-one transformation with height set elements that are all divisible by $k$, for some $k\ge 2$. Then $T$ is not weak mixing, and thus not EIC.
\end{lemma}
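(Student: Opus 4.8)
The plan is to show that if every element of every height set $H_n$ is divisible by $k$, then the Koopman operator $U f = f \circ T$ has a nontrivial finite-dimensional invariant subspace in $L^\infty$, which contradicts $L_\infty$-weak mixing and hence (by the chain of implications recorded just before the statement) contradicts both weak mixing and EIC. Concretely, I would first observe that since $H_0,\ldots,H_{n-1}$ are all contained in $k\Z$, every descendant set $D(I,n) = H_0 + \cdots + H_{n-1}$ of the base $I$ of $C_0$ is contained in $k\Z$. Thus for the base level $I_n$ of every column $C_n$, all of $C_n$ decomposes into levels $T^j I_n$ with $j$ running through $D(I,n) \subset k\Z$ together with the spacer levels; but the spacers themselves are added at heights that are also sums of the $h_{n,\ell}$'s, and one checks that all level heights in $C_n$ relative to the base are congruent mod $k$ in a coherent way. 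The cleanest formulation: define $\psi: X \to \Z/k\Z$ by sending $x$ to the residue mod $k$ of the height of $x$ within the column $C_n$ containing it (for $n$ large enough that $x \in C_n$), measured from the base of $C_n$. Because each $H_n \subset k\Z$, this residue is independent of which $C_n \ni x$ we use, so $\psi$ is well defined a.e., and $\psi(Tx) = \psi(x) + 1 \pmod k$ wherever $T$ acts within a column.

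Next I would turn $\psi$ into a spectral obstruction. Let $\omega = e^{2\pi i/k}$ and set $f = \omega^{\psi} \in L^\infty(X,\mu)$; this is a bounded measurable function, not constant (it takes all $k$-th roots of unity on sets of positive measure, since each residue class in a tall column has positive measure), and it satisfies $U f = f \circ T = \omega \cdot f$ almost everywhere. Hence $f$ is an eigenfunction of $U$ with eigenvalue $\omega \neq 1$, so $\mathrm{span}\{f\}$ is a one-dimensional $U$-invariant subspace of $L^\infty(X,\mu)$ that is not the constants. By definition this means $T$ is not $L_\infty$-weak mixing; since $L_\infty$-weak mixing is implied by weak mixing (and by EIC), $T$ is neither weakly mixing nor EIC. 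This also matches the informal statement that $T$ "is not weak mixing," since a nontrivial eigenvalue of the Koopman operator is the classical obstruction to weak mixing.

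The one point requiring care — and the place I expect to be the main obstacle — is verifying that $\psi$ is genuinely well defined almost everywhere and that $Uf = \omega f$ holds on a full-measure set, not merely where $T$ stays inside a fixed column. For well-definedness: if $x \in C_n \cap C_{n+1}$, its height in $C_{n+1}$ from the base equals its height in $C_n$ plus some element of $D(I,n)\cdot$-type shift, namely an element of $\{0\}\cup\{\sum_{m=0}^\ell h_{n,m}\}$, i.e.\ an element of $H_n \subset k\Z$, so the residue is unchanged; iterating handles all larger columns, and $X = \bigcup_n C_n$ gives full measure. For the eigenfunction identity: for a.e.\ $x$ there is $n$ with $x$ in a non-top level of $C_n$, so $Tx$ lies directly above $x$ in $C_n$, giving $\psi(Tx) = \psi(x)+1$; the set of $x$ for which this fails for all $n$ (always in the top level) has measure zero. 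I would write these two verifications out carefully and then conclude. A slicker alternative, if one prefers to avoid building $\psi$ by hand, is to invoke the standard fact that a rank-one transformation with all height-set elements divisible by $k$ is a $k$-point extension / has $\Z/k\Z$ as a factor (the factor being rotation on $k$ points), and a nontrivial finite factor immediately precludes weak mixing and $L_\infty$-weak mixing; but the explicit eigenfunction argument above is self-contained and I would use it.
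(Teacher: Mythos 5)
Your proof is correct and follows essentially the same route as the paper: you define the height-mod-$k$ map $\psi$ (the paper's $L$), verify it is well defined because all height-set elements lie in $k\Z$, and conclude that $T$ has the rotation on $k$ points as a factor, which precludes weak mixing and hence EIC. The only difference is that you additionally spell out the eigenfunction $f=\omega^{\psi}$ with $Uf=\omega f$, a harmless elaboration of the same obstruction.
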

\begin{proof}
Define the function $L:X \rightarrow \{0,\ldots,k-1\}$ sending $x$ to the height, reduced modulo $k$, of the level containing $x$ in the first column $C_{i(x)}$ in which $x$ appears,. It follows that in all subsequent columns, $x$ appears in levels of height $L(x) \mod{k}$. Hence, $T$ has a rotation on $k$ elements as a factor.
\end{proof}

\begin{proposition}\label{L:mixingnotDE}
There exists an ergodic Koopman mixing infinite measure preserving transformation that is not $L_\infty$ weak mixing.
\end{proposition}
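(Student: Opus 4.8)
The plan is to build a rank-one transformation whose height set elements are all divisible by some fixed $k \ge 2$ — forcing a rotation factor on $k$ points by Lemma~\ref{L:notEIC}, hence failure of weak mixing and a fortiori of $L_\infty$-weak mixing — while simultaneously arranging the spacer counts so that $T$ is Koopman mixing (zero-type). The simplest concrete choice is $k = 2$: take a known construction of an ergodic, Koopman mixing (zero-type) rank-one infinite measure-preserving transformation, e.g. a rank-one transformation with $r_n \to \infty$ and enough spacers to force $\mu(T^n A \cap B) \to 0$ for finite sets, and then multiply every length in sight by $2$. Concretely, if $(H_n)$ are the height sets of such a transformation $T_0$, let $T$ be the rank-one transformation with height sets $2H_n = \{2a : a \in H_n\}$ and the same cutting sequence $(r_n)$; equivalently, replace each spacer count $s_{n,k}$ by $2s_{n,k}$ and insert an extra spacer over the appropriate subcolumns so every height is even. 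Since $h_{j,k}$ and all descendant heights scale by $2$, every element of every $H_n$ is divisible by $2$.

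First I would recall, or cite, a construction of an ergodic Koopman mixing rank-one infinite measure-preserving transformation; this is classical (Hajian–Kakutani zero-type examples, or the rank-one "mixing" examples discussed in \cite{DaRy11}), so I would invoke it rather than reprove it. Next I would verify that the doubling operation preserves Koopman mixing. The key point is that the doubled transformation $T$ is, up to a measure-theoretic rescaling, built the same way as $T_0$: cutting a column into $r_n$ pieces and stacking with spacer counts $2 s_{n,k}$ (plus possibly one unit of parity-correcting spacer). One can check directly from the cutting-and-stacking data that the overlap quantities $\mu(T^n A \cap B)$ for $A, B$ unions of levels decay exactly as they do for $T_0$ along the correspondingly doubled times, and more simply: $T$ restricted to the even-height levels (the set $L^{-1}(0)$ in the notation of Lemma~\ref{L:notEIC}) under the induced map $T^2$ is isomorphic to a transformation of the same type as $T_0$, and Koopman mixing of $T$ reduces to Koopman mixing of $T^2$ on this piece. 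Then I would apply Lemma~\ref{L:notEIC}: every height set element of $T$ is divisible by $k=2$, so $T$ has a rotation on $2$ points as a factor, hence is not weak mixing, hence not $L_\infty$-weak mixing (using the implication chain $L_\infty$-WM $\Rightarrow$ WM displayed before the proposition, which fails here). Ergodicity of $T$ is automatic since rank-one transformations are conservative ergodic.

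The main obstacle I anticipate is making the interaction between "all heights divisible by $k$" and "Koopman mixing" airtight: a priori, forcing a rotation factor is a rigidity-type constraint, and one must be sure it does not obstruct zero-type behavior. It does not — the rotation factor lives on a finite set and says nothing about the infinite-measure fibers — but the cleanest way to see this is to start from a zero-type example and push it through the doubling functor, checking that the decay estimate $\lim_n \mu(T^n A \cap B) = 0$ survives. One must be slightly careful that "doubling" a rank-one construction is itself a legitimate rank-one construction: cutting $C_n$ into $r_n$ columns and adding $2 s_{n,k}$ spacers over the $k$-th, with the extra parity spacer absorbed into the rightmost subcolumn's count, manifestly satisfies the three axioms of Section~\ref{S:rank1}, and the resulting transformation is conservative ergodic, so the only real content is the Koopman mixing verification, which is a routine (if slightly tedious) estimate on level overlaps. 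I would present that estimate by observing that, for $A$ and $B$ finite unions of levels in some $C_i$ of the doubled tower, $\mu(T^n A \cap B)$ is controlled by the same descendant-set combinatorics that control the corresponding quantity for $T_0$, so zero-type for $T_0$ gives zero-type for $T$.
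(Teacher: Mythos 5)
Your proposal is correct, but it reaches the result by a different route than the paper. You start from a cited ergodic, Koopman mixing (zero-type) rank-one example $T_0$, double every spacer count (with a parity-correcting spacer at the first stage so that every height-set element becomes even), invoke Lemma~\ref{L:notEIC} to kill $L_\infty$-weak mixing, and transfer Koopman mixing from $T_0$ to the doubled map $T$; your second transfer argument is the one that actually works and can be made rigorous: since all height-set elements of $T$ are even, the sets $L^{-1}(0)$ and $L^{-1}(1)$ are $T^2$-invariant, $T^2$ restricted to $L^{-1}(0)$ is again rank-one with the original height sets (so isomorphic to $T_0$, up to the harmless stage-zero adjustment), and splitting finite-measure sets $A,B$ along the parity partition and treating even and odd times separately reduces $\lim_n\mu(T^nA\cap B)=0$ to zero-type of $T^2$ on each piece. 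The paper instead builds one explicit self-contained example, $H_n=\{0,2h_n,4h_n,\ldots,2^{n+1}h_n\}$ with enough spacers to keep $h_{n+1}$ even, and proves the zero-type property directly by the descendant-set combinatorics of Section~\ref{S:rank1}: for $B$ a union of levels and $k\in[h_n,h_{n+1})$ it shows $\mu(B\cap T^kB)<\tfrac{2}{n}\mu(B)$, then finishes with an approximation argument, and applies the same Lemma~\ref{L:notEIC}. Your approach buys modularity (no overlap estimate, at the cost of trusting an external construction and carefully checking that the doubling is a legitimate rank-one construction with exactly doubled height sets, which it is); the paper's buys self-containment and an explicit example in the same height-set language used throughout, at the cost of the combinatorial estimate. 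Your vaguer first justification (``the overlaps decay along correspondingly doubled times'') should be dropped in favor of the induced-map argument, which is the airtight version.
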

\begin{proof}
We use an ergodic rank-one transformation which uses all even height set elements. Take for instance the transformation which uses $H_n = \left\{ 0, 2h_n,4h_n,\ldots,2^{n+1} h_n\right\}$, where we add at least $(2^{n+1} + 1) h_n$ spacers on the last subcolumn (this number we select so that $h_{n+1}$ is even). Take any union of levels $B$ from column $C_m$, with descendant heights indexed by $D(B,n)$, for all $n \ge m$. 

We will show that it is impossible for $\mu\left( B \cap T^k(B) \right) \ge \frac{2}{n} \mu(B)$ when $k \ge h_n$ for some $n \ge m$. Indeed, suppose that this were the case: then we could write $k \in [h_n,h_{n+1})$ for such a $n$. Consider the column $C_{n+1}$, which contains $n + 2$ copies of $D(B,n)$ at heights given by $D_i(B,n+1) = 2^i h_n + D(B,n),\, i = 1,\ldots,n+1$ and $D_0(B,n+1) = D(B,n)$ (call their union $D(B,n+1)$). We can consider only $k \le h_{n+1}/2$, for if $k > h_{n+1}/2$ then in column $C_{n+2}$ each copy of $D(B,n+2)$ is sent entirely to the spacers comprising the upper half of $C_{n+1}$ or to the (at least) $h_{n+1}$ spacers added to each subcolumn of $C_{n+1}$ to produce $C_{n+2}$. 

For such a $k \le h_{n+1}/2$, if our supposition holds, it must be true that $\big| k + D(B,n+1) \cap D(B,n) \big| \ge \frac{2}{n} |D(B,n+1)|$, for we can express $B \cap T^k (B)$ entirely as a union of levels in $C_{n+1}$. Since $k \ge h_n$, any copy $D_i(B,n+1)$ cannot intersect itself under translation by $k$. Also, it is clear that any copy of $D(B,n)$ in $C_{n+1}$ can intersect at most one other copy. Suppose that $k + D_i(B,n+1) \cap D_j(B,n+1) \ne \emptyset$ for some $j > i \ge 1$. Then it must be the case that $ \big| 2^i + k - 2^j \big| h_n \le h_n$, whence $\big| 2^i + k - 2^j \big| \le 1$. Clearly, $k$ can only hold this property for one pair $(i,j)$ with $i,j \ge 1$, so there can only be one nonempty intersection of the form $k+ D_i(B,n+1) \cap D_j(B,n+1)$, $i,j \ge 1$. This leaves open the possibility that $k+D_0(B,n+1)$ intersects $D_i(B,n+1)$ for some positive $i$, but in any case we must have $\big| k + D(B,n+1) \cap D(B,n+1) \big| \le \frac{2}{n+2} |D(B,n+1)|$, a contradiction. So 
\[\lim_{k \rightarrow \infty} \mu\big( B \cap T^k(B) \big) < \lim_{k\rightarrow \infty} \frac{2}{n(k)} = 0,\]
where $n(k)$ is the unique value of $n$ such that $k \in [h_n,h_{n+1})$. By a simple approximation argument, like that employed in Theorem \ref{T:dergodic}, $T$ is Koopman mixing but not weak mixing by Lemma \ref{L:notEIC}.
\end{proof}

We now have in Theorem \ref{T:dergodic} a transformation which is weak doubly ergodic (thus weak mixing) but not Koopman mixing, and in this lemma a Koopman mixing transformation that is not $L_\infty$-weak mixing. It follows that weak double ergodicity (and $L_\infty$-weak mixing) is independent from Koopman mixing.

\section{Further Observations on WDE}

\subsection{WDE on a Partition}

For any $k \in \N$, Lemma \ref{L:notEIC} uses a transformation $T$ which has the shift by $1$ on $\Z_k$ as a factor (take the factor map $\psi$ which assigns each point to the height of the level $\mod{k}$ in which it first appears). Though $T$ is not EIC for $k \ge 2$, it is possible for $T$ to be WDE on each set $\psi^{-1} (z), \, z \in \Z_k$. With $k = 2$, this was essentially the case of Proposition 5.1 in \cite{BoFiMaSi01}, where $\psi^{-1}(0)$ was a set intersecting each interval in $\R$ in positive measure. Additionally, it is possible for $T$ to be power weakly mixing on each $\psi^{-1}(z), \, 0 \le z \le k-1$ but not EIC:
\begin{lemma}
For any $k \ge 2$, there exists a rank-one $T$ which is not $L_\infty$-weak mixing but is WDE on a partition of size $k$. There also exist rank-one transformations $T$ that are not $L_\infty$-weak mixing but are power weakly mixing on a partition of size $k$. 
\end{lemma}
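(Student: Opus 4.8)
The plan is to build, for each fixed $k \ge 2$, a rank-one transformation $T$ whose height-set sequence is assembled by interleaving two kinds of stages: ``divisibility stages'' that force a $\Z_k$-factor (so $T$ is not $L_\infty$-weak mixing, via Lemma~\ref{L:notEIC}), and ``mixing-on-fibers stages'' that ensure the desired dynamical richness holds on each fiber $\psi^{-1}(z)$, where $\psi$ is the factor map sending $x$ to the height mod $k$ of the first level containing $x$. The key observation is that all the constructions of this paper are modular at the level of height sets: WDE follows from having infinitely many height sets in arithmetic (staircase) position (Proposition~\ref{P:hardstairs}), and conservativity/non-conservativity of products is read off from the height sets via Proposition~\ref{P:conservprods2} and Theorem~\ref{T:conscuts}. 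So one can insert such stages while keeping every height-set element divisible by $k$, provided one replaces the increment $1$ in the staircase by $k$ (i.e.\ put $qk$ spacers over the $q$-th subcolumn rather than $q$), and scales all spacer counts to be multiples of $k$. This preserves divisibility by $k$ at every stage, hence the $\Z_k$-factor persists.

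First I would set up the partition and the factor map precisely, noting $\psi$ is $T$-equivariant onto the rotation by $1$ on $\Z_k$, so $T$ permutes the fibers cyclically and $T^k$ preserves each fiber $\psi^{-1}(z)$; it is $T^k$ restricted to a fiber (equivalently, the induced/return picture) that must be analyzed. For the first assertion (WDE on a partition of size $k$): using only the divisibility staircase stages (increment $k$ in place of $1$), observe that within a single fiber the relevant descendant-set arithmetic is exactly that of an arithmetic rank-one transformation after dividing all heights by $k$; the staircase structure survives the rescaling because $\frac{s_n}{r_n} > \tau$ is scale-invariant and $k_{n} \ge 1$ becomes $k_n \ge k \ge 1$. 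Then I would run the argument of Proposition~\ref{P:hardstairs} verbatim inside the fiber — Lemmas~\ref{L:levelsa} and~\ref{L:levelsb} apply to the rank-one structure of $T^k$ on the fiber once one checks the fiber itself carries a rank-one cutting-and-stacking structure (the levels of $C_n$ at heights $\equiv z \pmod k$, with induced cut/spacer data) — to conclude $T^k$ is WDE on $\psi^{-1}(z)$, which is the intended meaning of ``$T$ is WDE on a partition of size $k$.''

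For the second assertion (power weak mixing on each fiber), I would additionally require the staircase-type stages to have $r_n \to \infty$ fast enough, and I would check that the rescaled-by-$k$ staircase height sets, restricted to a fiber, still satisfy the arithmetic-rank-one hypotheses needed for conservativity of all finite cartesian products of all powers. Concretely: Theorem~\ref{T:conscuts} with $r_n = |H_n|$ and the divergence $\prod_n (1 - r_n^{-(j-1)}) = 0$ gives conservativity of $(T^\ell)^{(j)}$ for every $\ell, j$ on $X$; combined with WDE (hence weak double ergodicity and thus, by Lemma~\ref{L:kconserg}, the multi-set condition) on each fiber and a staircase-in-fibers argument showing ergodicity of all products of all powers on the fiber, one gets power weak mixing on $\psi^{-1}(z)$. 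I would model the power-weak-mixing-on-a-fiber argument on the known constructions of power weakly mixing rank-ones (as in the high-staircase context of \cite{DaRy11}), transplanted to the fiber via the divide-by-$k$ identification. Finally, since every height-set element is a multiple of $k$ by construction, Lemma~\ref{L:notEIC} gives that $T$ is not weak mixing, hence not $L_\infty$-weak mixing.

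The main obstacle I expect is verifying that $T^k$ restricted to a fiber $\psi^{-1}(z)$ is genuinely a rank-one cutting-and-stacking transformation with an explicitly computable height-set sequence, and that the ``interleaving'' of divisibility stages with staircase stages does not destroy the staircase position in the fiber's height sets: the sum-set decomposition $D(I,j) = H_i + \cdots + H_{j-1}$ must still contain infinitely many summands $H_{n_i}$ in (rescaled) staircase form when viewed inside the fiber, and one must ensure the non-staircase stages contribute only ``harmless'' arithmetic-progression-like summands (as in Theorems~\ref{T:dergodic} and~\ref{T:dergodic2}, where even stages were arithmetic progressions and odd stages were staircases). Once the bookkeeping identifying the fiber's rank-one data is in place, both assertions reduce to invoking Proposition~\ref{P:hardstairs} (and, for the second, the conservativity machinery plus a standard power-weak-mixing staircase argument) applied inside the fiber, together with Lemma~\ref{L:notEIC} applied to $T$ on all of $X$.
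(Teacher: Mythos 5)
Your handling of the first assertion is essentially the paper's own argument: take a staircase-type construction whose spacer increments are multiples of $k$ (so every height-set element is divisible by $k$), partition $X$ into the sets $S_\ell$ of points appearing in levels of height $\ell \bmod k$, rerun the proof of Proposition~\ref{P:hardstairs} inside a fiber with all height differences scaled by $k$, and invoke Lemma~\ref{L:notEIC} for the failure of $L_\infty$-weak mixing. That part is fine, modulo the bookkeeping you yourself flag (identifying the fiber of $T^k$ as a rank-one transformation via the divide-by-$k$ correspondence), which is exactly the identification the paper uses.

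The second assertion is where your plan has a genuine gap. You propose to get power weak mixing on a fiber from (i) conservativity of all $(T^\ell)^{(j)}$ via Theorem~\ref{T:conscuts} plus (ii) WDE on the fiber plus (iii) an unspecified ``staircase-in-fibers argument showing ergodicity of all products of all powers.'' Item (iii) is precisely the content that needs proving, and it does not follow from (i) and (ii): the central point of Theorems~\ref{T:dergodic} and~\ref{T:dergodic2} is that a WDE transformation can have conservative (indeed infinite conservative index) but non-ergodic cartesian square, and Corollary~\ref{C:notpwm} shows that staircase-type (high staircase) constructions need not even have conservative square, so ``model it on the staircase constructions of \cite{DaRy11}'' is not a proof. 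There is also an internal tension in your growth conditions: Theorem~\ref{T:conscuts} requires $\prod_n\bigl(1-r_n^{-(j-1)}\bigr)=0$, i.e.\ $\sum_n r_n^{-(j-1)}=\infty$ for each $j$, which is a slow-growth constraint, whereas you ask for $r_n\to\infty$ ``fast enough.'' The paper avoids all of this with a one-line trick: start from a rank-one transformation already known to be power weakly mixing (as in \cite{DGMS99}) and multiply every height-set element by $k$; then $T^k$ is closed on each $S_\ell$ and, under the divide-by-$k$ identification, is a rank-one transformation on that fiber with the \emph{same} height sets as the original, hence power weakly mixing there, while Lemma~\ref{L:notEIC} still applies to $T$ on $X$. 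If you want to salvage your from-scratch construction, you would have to supply a complete proof of ergodicity of all finite products of all powers on the fiber, which is a substantial piece of work the cited-example route renders unnecessary.
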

\begin{proof}
Let $T$ be a staircase-type transformation using steps of size $k$, so $H_n = \left\{0,h_n,2h_n+k, \ldots , h_n + k \sum_{i=1}^{r_n-2}i\right\}$, all elements divisible by $k$. The partition is then the $k$ sets $S_\ell, \, 0\le \ell \le k-1$, containing the $x$ which appear in levels of height $\ell \mod{k}$. The proof of double ergodicity on each $S_\ell$ is almost identical to that of Proposition \ref{P:hardstairs} with the minor difference that differences between level heights must be multiplied by $k$. $T$ is not $L_\infty$-weak mixing by Lemma \ref{L:notEIC}. 

Similarly, one can take a power weakly mixing rank-one $T$ (e.g.\ see \cite{DGMS99}) and multiply all of its height set elements by $k$. Letting $S_\ell, \, \ell= 0,\ldots , k-1$ be defined as before, then $T^k$ is a rank-one transformation that is closed on $S_\ell$ with the same height sets as $T$, so it is power weakly mixing on each $S_\ell$ but not $L_\infty$-weak mixing. 
\end{proof}

\subsection{Invariant Sets when $T$ is not WDE}
From the definition of double ergodicity it is clear that if $T$ is not weak doubly ergodic, $T\times T$ is not ergodic (i.e.\ $(T\times T)^{-n} (A \times A ) $ does not intersect $A \times B$ for any positive $n$). What is less clear is what the neither null nor conull invariant sets for $T \times T$ are; with the following Lemma we can make this determination.

\begin{lemma}\label{L:deequivalence}
The following are equivalent if $T$ is invertible nonsingular ergodic: 
\begin{enumerate}
	\item[(I)] $T$ is weak doubly ergodic 
	\item[(II)] For every $A,B$ of positive measure in $X$ there exists some $n \in \Z\setminus \{0\}$ such that $\mu (A \cap T^n A) > 0$ and $\mu ( B \cap T^n A) > 0$
	\item[(III)] For every $A,B,C,D$ of positive measure in $X$ there exists some $n \in \Z\setminus \{0\}$ such that $\mu (A \cap T^n B) > 0$ and $\mu ( C \cap T^n D) > 0$.
\end{enumerate}
\end{lemma}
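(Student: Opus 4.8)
The plan is to prove the cycle of implications $(I)\Rightarrow(III)\Rightarrow(II)\Rightarrow(I)$, using invertibility of $T$ and the $k$-set version of weak double ergodicity from Lemma~\ref{L:kconserg} (specialized to $\Z$-actions, which is the original Proposition~2.1 from \cite{BoFiMaSi01}). The implications $(III)\Rightarrow(II)$ and $(II)\Rightarrow(I)$ are essentially immediate: $(II)$ is the special case $B=A$, $C=B$, $D=A$ of $(III)$ with the sign of $n$ left free; and $(II)\Rightarrow(I)$ follows because if the $n$ produced by $(II)$ is negative, we apply $(II)$ to the pair $T^nA$ and $B$ (or simply use that $\mu(A\cap T^nA)>0 \iff \mu(T^{-n}A\cap A)>0$ and chase the second inequality through, using nonsingularity to pass positivity of measure across powers of $T$). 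I should be slightly careful here: going from a two-sided $n$ to a one-sided (positive) $n$ requires replacing $A$ by a forward image, and I will want to note that $\mu(B\cap T^nA)>0$ together with $\mu(A\cap T^nA)>0$ lets me choose the set $A' = A\cap T^{-n}A \cap T^{-n}(T^{-n}\cdots)$—more simply, if $n<0$ set $m=-n>0$; then $\mu(T^{-m}A\cap A)>0$ gives $\mu(A\cap T^mA)>0$, and $\mu(B\cap T^{-m}A)>0$ gives $\mu(T^mB\cap A)>0$, so applying (I)-style reasoning to $A$ and $T^mB$ recovers what is needed. The cleanest route is to prove $(I)\Leftrightarrow(II)$ directly as a lemma about replacing $n$ by $|n|$, then bootstrap.

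The substantive implication is $(I)\Rightarrow(III)$. Given $A,B,C,D$ of positive measure, I first use ergodicity and invertibility of $T$: since $T$ is ergodic, $\bigcup_{k\in\Z}T^kB$ is conull, so there is an integer $p$ with $\mu(A\cap T^pB)>0$; likewise there is an integer $q$ with $\mu(C\cap T^qD)>0$. Set $A_1 = A\cap T^pB$ and $C_1 = C\cap T^qD$, both of positive measure. Now I want a single $n$ that simultaneously "links" $B$ to $A$ and $D$ to $C$. Apply the multi-set form of weak double ergodicity (Lemma~\ref{L:kconserg} with $k=2$): there exists $g=n\in\Z$ with $\mu(T^n B' \cap A_1)>0$ and $\mu(T^n D' \cap C_1)>0$ for suitably chosen positive-measure pieces $B'\subset B$, $D'\subset D$—but more directly, Lemma~\ref{L:kconserg} says that for positive measure sets $A_i,B_i$, $i=1,2$, there is $n$ with $\mu(T^nA_i\cap B_i)>0$ for both $i$; take $A_1:=B$, $B_1:=A\cap T^pB$ (nonempty-measure), $A_2:=D$, $B_2:=C\cap T^qD$. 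Then the resulting $n$ satisfies $\mu(T^nB\cap A)\ge\mu(T^nB\cap(A\cap T^pB))>0$ and $\mu(T^nD\cap C)>0$, which is exactly $(III)$ once we check $n\neq 0$. Nonzero-ness is free unless $A\cap B$ and $C\cap D$ both already have positive measure, in which case we can instead apply Lemma~\ref{L:kconserg} with four sets (or shrink $B,D$ to be disjoint from $A,C$ respectively using nonatomicity, noting that if $B\subset A$ we can still split off a positive-measure subset $B'$ with $\mu(A\cap T^nB')>0$ forced by ergodicity—wait, this needs care). The honest fix: if no nonzero $n$ works for $(III)$, then in particular $(I)$ fails by taking $B=A$, $C=B$, $D=A$, contradiction; so it suffices to handle the case where at least one of $A\triangle B$, $C\triangle D$ has positive measure, and there a standard perturbation makes $n=0$ impossible.

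The main obstacle I anticipate is precisely this bookkeeping around $n=0$ and around the asymmetry between the "$A\cap T^nA$" clause (which forces $A$ to recur to itself) in $(I)$-$(II)$ versus the fully general "$A\cap T^nB$" clause in $(III)$. The resolution is to reduce $(III)$ to $(II)$ by pre-translating one set into another via ergodicity, as sketched above, and to reduce the two-sided/one-sided discrepancy by the observation that $\mu(A\cap T^nA)>0$ is symmetric under $n\mapsto -n$ and, by nonsingularity, can be shifted: if $\mu(B\cap T^{-n}A)>0$ then $\mu(T^nB\cap A)>0$, so one re-runs weak double ergodicity on the pair $(T^nB$-ish, $A)$. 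I expect the write-up to be short once the multi-set Lemma~\ref{L:kconserg} is invoked; the only real content is noticing that ergodicity lets us "move" $B$ onto $A$ before applying double ergodicity, turning the general four-set statement into the three-set statement $(II)$.
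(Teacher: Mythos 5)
There is a genuine gap, and it sits exactly where the paper's proof does all of its work: the implication $(II)\Rightarrow(I)$ (equivalently, upgrading a nonzero exponent to one of the required sign). You declare this step ``essentially immediate,'' but it is not. Writing $(I)$ as: there exists $k>0$ with $\mu(A\cap T^{-k}A)>0$ and $\mu(B\cap T^{-k}A)>0$, the condition $\mu(A\cap T^nA)>0$ is indeed symmetric under $n\mapsto -n$, but $\mu(B\cap T^nA)>0$ is not: it becomes $\mu(T^{-n}B\cap A)>0$, which is a statement about $T^{-n}B$ and $A$, not about $B$ and $T^{-n}A$. Your proposed fix --- ``re-run'' the double ergodicity condition on the pair $(A,\,T^mB)$ --- loses simultaneity: applying $(II)$ to $(A,T^mB)$ produces a new exponent $n'$ with $\mu(A\cap T^{n'}A)>0$ and $\mu(B\cap T^{n'-m}A)>0$, and these two inequalities no longer involve the same exponent, so nothing is recovered. (The same sign obstruction reappears if you instead try to deduce $(I)$ directly from $(III)$ by specializing the quadruple, so the cycle $(I)\Rightarrow(III)\Rightarrow(II)\Rightarrow(I)$ does not avoid it.) The paper's proof of $(II)\Rightarrow(I)$ is a real argument: assuming all exponents working for the pair $(A,B)$ are of the wrong sign, it first shows this set of exponents is unbounded (if $n_1$ works, apply $(II)$ to the pair $(A\cap T^{n_1}A,\,B\cap T^{n_1}A)$ to get $n_2$, and note $n_1+n_2$ works), then uses ergodicity to choose $i$ with $\mu(T^iA\cap B)>0$ and applies the unboundedness to the pair $(A\cap T^{-i}B,\,T^iA\cap B)$; shifting by $T^{i-n}$ and $T^{-n}$ produces a working exponent $i-n$ of the forbidden sign, a contradiction. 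None of this appears in your proposal, and your sign bookkeeping is also reversed (the problematic case is when $(II)$ hands you the exponent with the sign opposite to the one $(I)$ demands).

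Your $(I)\Rightarrow(III)$ step is essentially sound and close in spirit to the paper, which handles $(II)\Rightarrow(III)$ by adapting the proof of Proposition 2.1 of \cite{BoFiMaSi01} (quoted here as Lemma~\ref{L:kconserg}): using ergodicity to pre-translate $B$ onto $A$ and $D$ onto $C$ and then invoking the multi-pair form of weak double ergodicity is fine, and if you use the integer-action version of that lemma (which produces a strictly positive $n$) the worry about $n=0$ disappears; your proposed ``honest fix'' for $n=0$ as written is a non sequitur (failure of $(III)$ for one fixed quadruple does not by itself exhibit a pair witnessing failure of $(I)$), but it is not needed. The missing content is solely the two-sided-to-one-sided step, and without the recurrence/unboundedness argument (or a substitute for it) the proof is incomplete.
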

\begin{proof}
The implication $(I)\implies (II)$ is clear, so we only need to show the converse. Suppose for the sake of contradiction that there exist positive measure sets $A,B$ such that the only $n$ such that $\mu(A \cap T^n A) > 0$ and $\mu (B \cap T^n A) > 0$ are positive. First, we establish that these $n$ are bounded; by supposition we can find an $n_1$ such that $\mu(A \cap T^{n_1} A) > 0$ and $\mu (B \cap T^{n_1} A) > 0$, and this $n_1$ is strictly positive or else $\mu (A \cap B) > 0$ and any $n$ such that $\mu\big(T^{-n}(A \cap B) \cap (A \cap B)\big) > 0$ would be a contradiction. By $(II)$ there exists an $n_2$ such that 
\[ 
\mu \Big( (A \cap T^{n_1} A) \cap T^{n_2} (A \cap T^{n_1} A) \Big) > 0  \text{  and  } \mu \Big( (B \cap T^{n_1} A) \cap T^{n_2} (A \cap T^{n_1} A) \Big)>0,
\]
and again by supposition $n_2 > 0$. Thus, $\mu ( A \cap T^{n_1 + n_2} A) > 0$ and $\mu (B \cap T^{n_1 + n_2} A) > 0$. Continuing in this manner, the exponent can be made arbitrarily large. 

Now by ergodicity of $T$, there is some $i \in \Z$ such that $\mu (T^i A \cap B ) > 0$. By (II), by the assumption on $A$ and $B$, and by the result just deduced, the set of $n$ such that $\mu \big( ( A\cap T^{-i} B) \cap T^n (A \cap T^{-i} B )  \big) > 0$ and $\mu \big(( T^i A\cap  B) \cap T^n (A \cap T^{-i} B )  \big) > 0$ is both positive and unbounded. But for such an $n$, by application of $T^{i - n}$ to the first inequality $T^{-n}$ to the second, one obtains: 
\begin{align*}
	\mu \Big( T^{-n} (T^i A \cap (B)) \cap (T^i A \cap B) \Big)> 0 \text{  and  } \mu \Big( T^{-n} ( T^i A \cap B) \cap  (A \cap T^{-i} B) \Big) > 0,
\end{align*}
whence $\mu (  A \cap T^{i-n} A ) > 0$ and $\mu ( B \cap T^{i - n} A ) > 0 $. As $n$ can be chosen to be arbitrarily high, this is a contradiction. 

The implication (III)$\implies$(II) is clear. The converse is established by the proof of Proposition 2.1 in \cite{BoFiMaSi01} with the minor change that $\ell$ can be positive or negative. 
\end{proof}

Note that Lemma \ref{L:deequivalence} precludes an obvious example of EIC$\not\implies$DE, which is an invertible transformation $T$ for which there exist positive sets $A,B,C,D$ such that all $n$ such that $\mu\Big( (T\times T)^n(A \times B) \cap C \times D \Big)$ are strictly positive. Such a $T$ would be EIC by the same argument of Proposition \ref{P:WDE-EIC} but not WDE by definition. By the equivalence obtained above, we can exactly give some invariant positive not-full measure sets for $T\times T$ when $T$ is not WDE:

\begin{corollary} 
If $T$ is nonsingular ergodic, and $T$ is not weak doubly ergodic, then $\bigcup_{n \in \Z} (T\times T)^n (A \times A)\cap \bigcup_{n \in \Z} (T\times T)^n (A\times B) = \emptyset$ for positive measure sets $A,B$. 
\end{corollary}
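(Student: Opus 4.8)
The plan is to deduce this Corollary directly from the equivalence $(\mathrm{I})\Leftrightarrow(\mathrm{III})$ established in Lemma~\ref{L:deequivalence}, arguing by contraposition. Suppose, for positive measure sets $A,B$, that the two orbit unions $U=\bigcup_{n\in\Z}(T\times T)^n(A\times A)$ and $V=\bigcup_{n\in\Z}(T\times T)^n(A\times B)$ had nonempty intersection. Both $U$ and $V$ are $(T\times T)$-invariant sets of positive measure, and their intersection is $(T\times T)$-invariant; since $T\times T$ is nonsingular, a nonempty intersection is in fact a positive measure intersection. I would then unwind what membership in $U\cap V$ means: a positive measure set of points $(x,y)$ with $(x,y)\in(T\times T)^{n}(A\times A)$ and $(x,y)\in(T\times T)^{m}(A\times B)$ for some integers $n,m$. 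Pigeonholing on the pair $(n,m)$, I get fixed integers and hence a positive measure set $E\subset X\times X$ with $(T\times T)^{-n}E\subset A\times A$ and $(T\times T)^{-m}E\subset A\times B$; equivalently, writing $E'=(T\times T)^{-n}E$, a positive measure subset of $A\times A$ that is carried by $(T\times T)^{m-n}$ into $A\times B$.

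Projecting to coordinates, this produces positive measure sets $A_1,A_2\subset A$ and $A_3\subset A$, $B_3\subset B$ and an integer $p=m-n$ with $\mu(A_1\cap T^{p}A_3)>0$ (first coordinate) and $\mu(A_2\cap T^{p}B_3)>0$ (second coordinate) — being careful that the ``for some $n$'' quantifier in condition (III) ranges over $\Z\setminus\{0\}$, which I can arrange since if $p=0$ the sets $A\times A$ and $A\times B$ already overlap in a set meeting $A\times A$ and (after one more application of ergodicity to move $B$ over $A$) the argument of Lemma~\ref{L:deequivalence} still applies. Applying condition (III) of Lemma~\ref{L:deequivalence} with these four positive measure sets — or more directly, observing that the existence of such $A_1,A_2,A_3,B_3,p$ is exactly a witness against the \emph{failure} pattern that (III) rules out — forces $T$ to be weak doubly ergodic, contradicting the hypothesis. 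Hence $U\cap V=\emptyset$.

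The main obstacle I anticipate is the bookkeeping in the pigeonhole/projection step: passing from ``$(x,y)$ lies in both orbit unions'' to a single pair of integers and then to genuinely positive measure coordinate sets requires that the countable union over $(n,m)\in\Z^2$ cannot be null while each piece is, which is immediate, but one must also check that the resulting coordinate projections have positive measure (true because $E'$ is a positive measure subset of a product, so its projections are positive) and that they sit inside $A$ and $B$ as claimed. A secondary point is the $n\neq 0$ clause in Lemma~\ref{L:deequivalence}(III); I would handle the degenerate case $p=0$ separately exactly as the proof of that Lemma handles it, using ergodicity of $T$ to shift one set over another and produce a nonzero exponent. Everything else is a routine translation between the orbit-union formulation and the combinatorial conditions (II)/(III).
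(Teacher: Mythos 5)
Your overall route (contraposition through Lemma~\ref{L:deequivalence}, pigeonholing over the pair of exponents, and reading off the two coordinate intersections) is the argument the paper intends, but the final step as you state it is a quantifier error, and it stems from reading the corollary with the wrong quantifier on $A,B$. Producing one quadruple $A_1,A_2,A_3,B_3$ and one exponent $p$ for which the pattern of condition (III) holds does not ``force $T$ to be weak doubly ergodic'': (II) and (III) are universally quantified over all positive measure sets, so a single witness neither verifies them nor contradicts ``$T$ is not WDE.'' Indeed, for \emph{arbitrary} positive measure $A,B$ the stated disjointness is simply false (take $A=B$; or any pair for which the double ergodicity condition happens to hold for some $n$, which occurs even for non-WDE $T$). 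The corollary has to be read with $A,B$ the pair witnessing the failure of WDE; by Lemma~\ref{L:deequivalence} (negation of (II)) this pair satisfies: for \emph{every} $n\in\Z\setminus\{0\}$, not both $\mu(A\cap T^nA)>0$ and $\mu(B\cap T^nA)>0$. Your derived data, cleaned up via the product identity $(A\times A)\cap(T\times T)^p(A\times B)=(A\cap T^pA)\times(A\cap T^pB)$, gives exactly $\mu(A\cap T^{-p}A)>0$ and $\mu(B\cap T^{-p}A)>0$, which contradicts that defining property of the witnessing pair directly --- no appeal to (III) is needed or legitimate. With that re-anchoring, your proof is essentially the paper's.

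Two smaller points. First, ``since $T\times T$ is nonsingular, a nonempty intersection is in fact a positive measure intersection'' is false: nonsingularity preserves null sets but cannot upgrade a nonempty (possibly null) set; the disjointness in the corollary must be read mod $\mu\times\mu$, and your contradiction hypothesis should be that the intersection has \emph{positive measure}, after which the pigeonhole over $(n,m)\in\Z^2$ works as you say. Second, in the degenerate case $p=0$ you get $\mu(A\cap B)>0$, and what finishes it is not ``ergodicity to move $B$ over $A$'' but conservativity (which holds since $T$ is invertible ergodic nonsingular on a nonatomic space, and is exactly how the proof of Lemma~\ref{L:deequivalence} handles its analogous case): recurrence of $A\cap B$ produces a nonzero $n$ with $\mu(A\cap T^nA)>0$ and $\mu(B\cap T^nA)>0$, again contradicting the witnessing property.
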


\section{Appendix: $(1-q^{-1})$-type Transformations}
 A notion related to partial rigidity was defined in
 \cite{HaOs71} where a transformation is said to be of $\boldsymbol{\alpha}$-\textbf{type}, $0<\rho\leq 1$,  if for every finite measure set $A$, $\limsup_{n\rightarrow \infty} \mu(T^n A \cap A) = \alpha \mu(A)$. Examples of such transformations with $\alpha \in [0,1]$ are given in  \cite{HaOs71}. We will study these properties in conjunction with various ergodic properties. 
 We will be interested in $\alpha$-type transformations with $\alpha< 1$; these transformations are called  
 \textbf{rigidity-free} transformations in \cite{Ro09}. We note that rigidity-free is equivalent to $\liminf_{k \rightarrow \infty} \mu( T^{k} A \bigtriangleup A) > 0$ for all finite measure sets $A$ of positive measure. 
 
 We examine the partial rigidity properties of weak doubly ergodic transformations with $T\times T$ conservative but not ergodic. We will show that we can have a high degree of control over the partial rigidity of transformations without sacrificing these properties. The goal is to have a $\left(1-q^{-1}\right)$-type transformation which we denote $T_q$, for all integers $q\ge 2$. Since $\alpha$-type implies partial rigidity which in turn implies infinite conservative index, the transformation of this section is a refinement of that of section \eqref{S:DE}.

First, we define $T_q$ by its height set sequence. Fix a natural number $q\ge 2$. For $n$ even, set $H_n = \big\{0,\gamma_n,2\gamma_n,\ldots,(q-1)\gamma_n\big\}$, where $\gamma_n= 2h_n$; because we always have the inequality $h_n > \max D(I,n)$, this must imply $2h_n > 2\max D(I,n) + 1$. For $n$ even, $\max D(I,n+1)$ can be calculated explicitly as $\max D(I,n) + (q-1) \gamma_n$. Note that by Lemma \ref{L:two}, we can choose the number of cuts to be employed in the next height set $r_{n+1}$ so high that at least $1-\frac{1}{4(n+1)^2}$ of the pairs $\left(i,j\right)\in \{0,\ldots,r_{n+1}-1\}^2$ have $|i-j|>2\max D(I,n+1)+1$. For instance, by solving for the inequality in equation (\ref{E:pairsinequality}) for $m=2q \gamma_n$, we see that we can set 
\begin{equation}
r_{n+1}>2\left((2m-1)n^2+\sqrt{n^2-2m^2n^2+n^4-4mn^4+4m^2n^4}\right)\label{E:distanceinequality}
\end{equation}
to achieve the desired inequality. Then, add enough spacers on the rightmost subcolumn of column $C_n, \, n\text{ even}$ in order to get 
\begin{equation} \label{E:hnselection}
h_{n+1}>\max\left\{2\sum_{i=1}^{r_{n+1}-1} i + 2\max D(I,n+1)+1,\, 10r_{n+1},\, 10qh_n\right\}.
\end{equation}
Finally, set 
\[H_{n+1} = \left\{ 0, \, h_{n+1} +1,\, 2h_{n+1}+3,\, \ldots,\, (r_{n+1}-1) h_{n+1} + \sum_{i=1}^{r_n-1} i \right\}.\]

For $n$ odd, we write 
\[H_n = \big\{ \beta_0,\ldots,\beta_{r_n-1} \big\},\]
where $\beta_i = i h_n + x_i - 1$, and $x_i$ is the $i$th triangular number. 

Consider a set $B$ which is a finite union of levels in $C_{n-1}$, where $n$ is odd. Note that for any $j \ge -1$, $B$ can be written as a finite union of levels in $C_{n+ j}$. Call this finite union $D(B,n+j)$. In particular, observe that
\[ D(B,n+1) = H_n + D(B,n) = \bigcup_{i = 0}^{r_n-1} \beta_i + D(B,n).\]
Thus, there are $r_n$ copies of $D(B,n)$ in $D(B,n+1)$, which we denote by $D_i(B,n+1)= \beta_i + D(B,n)$. It is clear that the diameter of the set $D(B,n)$ is at most $(q-1)\gamma_{n-1} + h_{n-1} - 1 < 2qh_{n-1}$, but we always have $\beta_i - \beta_{i-1} > h_n > 10qh_{n-1}$. So the distance from the bottom of any one copy of $D(B,n)$ in $D(B,n+1)$ to the bottom of its adjacent copies is more than twice its diameter, and under nonzero translation of $D(B,n+1)$, any such copy can intersect with only one other copy. We now discuss the descendant sets of $T_q$.

Fix an integer $k > 0$. For brevity, define 
\begin{align*}
S_n = \Big\{i,\, 0 \le  i < r_n :\, k+ D_i(B,n+1) \, \cap \, D_j(B,n+1) \ne \emptyset& \\
\text{ for some } j, \,  0 \le i < r_n& \Big\}
\end{align*}
as the set of indices of translated copies of $D(B,n)$ having nonempty intersections with another $D(B,n)$-copy in $D(B,n_1)$. For any $k \ge 1$ and $i \in S_n$, we may also define the bijection $\phi_k(i)$ from $S_n$ to $\{0,\ldots,r_n\}$ sending $i \in S_n$ to the unique index $j$ satisfying $\big(k+ D_i(B,n+1)\big) \cap  D_j(B,n+1) \ne \emptyset$. For $n$ odd, this allows us to write intersections of $D(B,n+1)$ with its translates by $k$ as individual intersections of $D(B,n)$-copies with other copies:
\begin{align} 
 D(B,n+1) \cap \big( k + D(B,n+1)\big) = \bigsqcup_{i \in S_n} & \Big( k+ D_i(B,n+1) \nonumber \\
& \cap D_{\phi_k(i)}(B,n+1)\Big). \label{E:cupdecomp}
 \end{align}
 These definitions aid in the proof of the following lemma:

\begin{lemma} \label{L:firstodd}
Let $n$ be odd and $B$ a union of levels in $C_{n-1}$. Let $K \subset \N$ denote the set of positive integers $k$ such that $\Big|D(B,n) \cap \big(k + D(B,n)\big)\Big| > \left(1- q^{-1}\right) |D(B,n)|$. Then if $k \le 2qh_{n-1}$ and $k \not \in K$, $\Big|D(B,n+1) \cap \big(k + D(B,n+1)\big)\Big| \le \left(1- q^{-1}\right) |D(B,n+1)|$. 
\end{lemma}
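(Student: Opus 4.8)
The plan is to exploit the copy structure of $D(B,n+1)=\bigsqcup_{i=0}^{r_n-1}D_i(B,n+1)$ described just above, where each copy $D_i(B,n+1)=\beta_i+D(B,n)$ has diameter less than $2qh_{n-1}$ while consecutive copies are separated by more than $h_n>10qh_{n-1}$. First I would fix $k\le 2qh_{n-1}$ with $k\notin K$ and use the decomposition \eqref{E:cupdecomp} to write
\[
\Big|D(B,n+1)\cap\big(k+D(B,n+1)\big)\Big|=\sum_{i\in S_n}\Big|\big(k+D_i(B,n+1)\big)\cap D_{\phi_k(i)}(B,n+1)\Big|.
\]
The key point is that because $0<k\le 2qh_{n-1}$ is strictly smaller than the inter-copy gap, no copy can translate onto itself; so for every $i\in S_n$ we have $\phi_k(i)\neq i$, and I would argue $\phi_k(i)=i-1$ (the translate $k+D_i$ lands in the window between copy $i-1$ and copy $i$, hence can only meet copy $i-1$ since $k$ is positive and below the gap). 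This already bounds each summand by $|D(B,n)|$, and bounds $|S_n|$ by $r_n$.

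The heart of the argument is to show each individual intersection $|(k+D_i(B,n+1))\cap D_{i-1}(B,n+1)|$ is itself small, i.e. at most $(1-q^{-1})|D(B,n)|$, by reducing it to the inductive hypothesis $k\notin K$. Since $D_i(B,n+1)=\beta_i+D(B,n)$ and $D_{i-1}(B,n+1)=\beta_{i-1}+D(B,n)$, the intersection $(k+D_i)\cap D_{i-1}$ is a translate of $(k+\beta_i-\beta_{i-1}+D(B,n))\cap D(B,n)=((k-(\beta_{i-1}-\beta_i-\text{stuff}))+D(B,n))\cap D(B,n)$; more precisely one gets $\big(k'+D(B,n)\big)\cap D(B,n)$ for the shift $k'=k-(\beta_{i-1}-\beta_i)$... here I must be careful with signs, since $\beta_{i-1}<\beta_i$ and $k>0$, so the relevant shift has absolute value roughly $\beta_i-\beta_{i-1}-k$, which is a \emph{positive} integer much larger than $2qh_{n-1}$ but also much larger than the diameter of $D(B,n)$ unless things line up exactly. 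Actually the cleaner route: for $(k+D_i)\cap D_{i-1}$ to be nonempty at all we need $k$ close to $\beta_i-\beta_{i-1}$, and then writing the intersection in terms of the internal coordinates of $D(B,n)$ shows it equals $(k''+D(B,n))\cap D(B,n)$ for some shift $k''$ with $|k''|\le 2qh_{n-1}$, and one shows $k''\notin K$ follows from $k\notin K$ (using that $K$ is determined only by the internal structure of $D(B,n)$, together with the explicit gap $\beta_i-\beta_{i-1}=h_n+$ (triangular number difference); this is exactly where the staircase form of the odd height sets enters). Then the inductive bound $|(k''+D(B,n))\cap D(B,n)|\le(1-q^{-1})|D(B,n)|$ applies to each summand.

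Putting it together: $|D(B,n+1)\cap(k+D(B,n+1))|\le |S_n|\cdot(1-q^{-1})|D(B,n)|\le r_n(1-q^{-1})|D(B,n)|=(1-q^{-1})|D(B,n+1)|$, which is the claim. The main obstacle I anticipate is the bookkeeping in the previous paragraph: precisely identifying the internal shift $k''$ and showing $k\notin K\Rightarrow k''\notin K$, since a priori an intersection of copy $i$ with copy $i-1$ could realize a different shift than the "self-intersection" shift that defines $K$. Resolving this cleanly will require using that the copies are exact translates of the \emph{same} set $D(B,n)$ together with the large-gap condition $\beta_i-\beta_{i-1}>10qh_{n-1}$ to rule out degenerate alignments; once $\phi_k(i)=i-1$ is pinned down and the shift identity is established, the rest is the routine summation above.
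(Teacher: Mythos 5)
Your structural claim about which copies can meet is backwards, and the proof collapses at exactly the point you flag as the ``main obstacle.'' For $0<k\le 2qh_{n-1}$ the translate $k+D_i(B,n+1)$ of the copy $D_i(B,n+1)=\beta_i+D(B,n)$ stays inside $[\beta_i,\beta_i+4qh_{n-1}]$, while the neighboring copies sit at distance $\beta_{i\pm1}$ with $|\beta_{i+1}-\beta_i|>h_n>10qh_{n-1}$ and each copy has diameter $<2qh_{n-1}$; so the translate cannot reach any copy other than $D_i(B,n+1)$ itself. Hence for $i\in S_n$ one has $\phi_k(i)=i$, not $\phi_k(i)=i-1$: smallness of $k$ relative to the gap rules out cross-copy intersections, not self-intersections (self-intersections are only excluded when $k$ exceeds the diameter of $D(B,n)$, which is the hypothesis of the companion case, not this one). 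Your $\phi_k(i)=i-1$ claim is also inconsistent with the sign of the translation: $(k+\beta_i+D(B,n))\cap(\beta_{i-1}+D(B,n))\ne\emptyset$ would force $k+\beta_i-\beta_{i-1}\in D(B,n)-D(B,n)$, impossible for $k>0$ since $\beta_i-\beta_{i-1}$ already exceeds the diameter of $D(B,n)$.

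Consequently the reduction you could not complete --- identifying an internal shift $k''$ and proving $k\notin K\Rightarrow k''\notin K$ --- is an attempt to handle a configuration that cannot occur for $k\le 2qh_{n-1}$, while the configuration that actually needs to be bounded (self-intersections) is the one your argument declared impossible. The paper's proof is the short one you bypassed: only self-intersections occur, each $\big(k+\beta_i+D(B,n)\big)\cap\big(\beta_i+D(B,n)\big)$ is a translate of $\big(k+D(B,n)\big)\cap D(B,n)$ and so has cardinality at most $\left(1-q^{-1}\right)|D(B,n)|$ precisely because $k\notin K$, and summing over the at most $r_n$ copies via \eqref{E:cupdecomp} gives the bound $\left(1-q^{-1}\right)|D(B,n+1)|$. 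No auxiliary shift $k''$ is needed. The cross-copy analysis you were gesturing at is the content of Lemma \ref{L:secondodd}, which treats $k>2qh_{n-1}$ and proceeds by showing $|S_n|<r_n/2$ rather than by reducing to membership in $K$.
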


\begin{proof}

For the first claim, fix an integer $k \in K^c \cap [1,\ldots,2qh_{n-1}]$. Then $k$ is too small to bridge the distance between adjacent $D(B,n)$-copies in $D(B,n+1)$, so for any $i \in \{0,\ldots,r_n - 1\}$, either $i \not \in S_n$ or $\phi_k(i) = i$. Because $k \not\in K$, each self-intersection $D_i(B,n+1) \cap \big( k + D_i(B,n+1)\big)$ has order at most $\left( 1- q^{-1} \right) \big| D(B,n+1)\big|$. By equation (\ref{E:cupdecomp}), 
\begin{align*}
\Big| D(B,n+1) \cap \big( k + D(B,n+1)\big) \Big| &\le |S_n| \left( 1- q^{-1} \right) \big| D(B,n+1)\big| \\
&\le \left( 1- q^{-1}\right) \big| D(B,n+1)\big|.
\end{align*}
\end{proof}

We now require some notation to deal with elements of $S_n$. Fix an integer $k > 2qh_{n-1}$. We will partition $S_n$ into the disjoint union of sets $I_1 \sqcup I_2 \sqcup \ldots \sqcup I_p$ as follows: let $i_1$ be the minimal element of $S_n$, and let $I_1 = \big\{ i \in S_n:\, \phi_k(i) - i = \phi_k(i_1) - i_1\big\}$. Then let $i_2$ be the minimal element of $S_n \setminus I_1$ (if it exists), and let $I_2 = \big\{ i \in S_n:\, \phi_k(i) - i = \phi_k(i_2) - i_2\big\}$. Proceed until the sets $I_1 \sqcup \ldots \sqcup I_p$ form a partition for $S_n$. For all $\ell \in \{1,\ldots,p\}$, let $j_\ell = \phi_k(i_\ell)$.

Fix an $\ell \in \{1,\ldots,p\}$, and an $i \in I_\ell$. Then we should have $\phi_k(i_\ell) - i = j_\ell - i_\ell$. Set $z = i - i_\ell = \phi_k(i) - j_\ell$. Note that for any $z \in \Z$ with $-i_\ell \le z < r_n - j_\ell$,
\begin{align} 
\beta_{\phi_k(i)} - \beta_i &= \beta_{j_\ell + z} - \beta_{i_\ell + z} =(j_\ell - i_\ell)h_n + x_{j_\ell} - x_{i_\ell} + (j_\ell - i_\ell)z \\
&= \beta_{j_\ell} - \beta_{i_\ell} + (j_\ell - i_\ell) z.\label{E:zeqn}
\end{align}
Because $k > 2qh_{n-1}$, which exceeds the diameter of $D(B,n)$, it is impossible for the $k$-translated copy $D_i(B,n)$ to intersect itself, so $j_\ell - i_\ell> 0$. Crucially, the diameter of $D(B,n)$ also implies that for every $i' \in S_n$, $\phi_k(i')$ must satisfy 
\begin{equation} \label{E:crucialeqn}
-2qh_{n-1} < \beta_{\phi_k(i')} - \beta_{i'} - k < 2qh_{n-1}.
\end{equation}
By application of (\ref{E:zeqn}) and (\ref{E:crucialeqn}), for every $z \in \big\{ i' - i_\ell,\, i' \in I_\ell \big\}$, the following inequality must hold:
\begin{equation}  \label{E:I1restrict}
-2qh_{n-1} < \beta_{j_\ell} - \beta_{i_\ell} + (j_\ell - i_\ell) z - k < 2qh_{n-1}.
\end{equation}

\begin{lemma} \label{L:secondodd}
Let $n$ be odd and $B$ be a union of levels in $C_{n-1}$. Let $S_n = I_1 \sqcup \ldots \sqcup I_p$, as previously described. If $k > 2qh_{n-1}$, $\Big|D(B,n+1) \cap \big(k + D(B,n+1)\big)\Big| \le \left(1- q^{-1}\right) |D(I,n+1)|$.
\end{lemma}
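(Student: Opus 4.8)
\textbf{Proof proposal for Lemma~\ref{L:secondodd}.}

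The plan is to bound the size of the intersection $D(B,n+1) \cap (k + D(B,n+1))$ by controlling, for each block $I_\ell$ in the partition $S_n = I_1 \sqcup \cdots \sqcup I_p$, how many indices $i$ can lie in $I_\ell$, and then summing the resulting contributions via the decomposition \eqref{E:cupdecomp}. The key observation is that within a fixed block $I_\ell$, the difference $j_\ell - i_\ell$ is a fixed positive integer, so the inequality \eqref{E:I1restrict} forces the shift parameter $z = i - i_\ell$ to lie in an interval of length at most $4qh_{n-1}/(j_\ell - i_\ell)$. In particular, if $j_\ell - i_\ell \ge 2$, then $|I_\ell| \le 4qh_{n-1}/(j_\ell - i_\ell) + 1$ is small relative to $r_n$, which by the choice \eqref{E:hnselection} (guaranteeing $h_{n+1}$ — and hence implicitly $r_n$ — is large) is negligible; and if $j_\ell - i_\ell = 1$, the block can be large but every intersection $k + D_i(B,n+1) \cap D_{i+1}(B,n+1)$ is a translate of $D(B,n)$ by a fixed amount $\beta_{i+1} - \beta_i - k$ whose magnitude is bounded by $2qh_{n-1} < h_{n-1}$ in absolute value away from a multiple of $h_{n-1}$, so Lemma~\ref{L:tone} applies.

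Concretely, I would split into two cases according to the value of $d_\ell := j_\ell - i_\ell$. \emph{Case $d_\ell \ge 2$:} here \eqref{E:I1restrict} confines $z$ to fewer than $\lceil 4qh_{n-1}/d_\ell\rceil \le 2qh_{n-1}$ values, so $\sum_{\ell: d_\ell \ge 2} |I_\ell| \le (\text{number of such blocks}) \cdot 2qh_{n-1}$; since distinct blocks have distinct values of $d_\ell$ in $\{1, \ldots, r_n\}$, one gets a bound like $|S_n| \le 2qh_{n-1} \cdot r_n$ for the union of these blocks, but more usefully, the total contribution of these blocks to the intersection is at most $|S_n^{\ge 2}| \cdot |D(B,n)| \le (4q^2 h_{n-1}^2) |D(B,n)|$, which is $o(|D(B,n+1)|) = o(r_n |D(B,n)|)$ by \eqref{E:hnselection}. \emph{Case $d_\ell = 1$:} there is at most one such block, and for each $i$ in it the intersection $k + D_i(B,n+1) \cap D_{i+1}(B,n+1)$ equals (a translate of) $D(B,n) \cap (c_i + D(B,n))$ where $c_i = \beta_{i+1} - \beta_i - k \in (-2qh_{n-1}, 2qh_{n-1})$ and $c_i \ne 0$ for at most one value of $i$ (by \eqref{E:crucialeqn} and strict monotonicity of $\beta_{i+1} - \beta_i$ in $i$, since consecutive gaps $\beta_{i+1} - \beta_i = h_n + (i+1)$ strictly increase). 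For the one exceptional $i$ where $c_i \neq 0$, applying Lemma~\ref{L:firstodd} recursively (or directly bounding $|D(B,n) \cap (c_i + D(B,n))| \le (1 - q^{-1})|D(B,n)|$ when $c_i \not\in K$, and using that such small nonzero $c_i$ can be arranged to miss $K$ by the structure of $H_{n-1}$) yields the desired factor $(1 - q^{-1})$.

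Combining: $|D(B,n+1) \cap (k + D(B,n+1))|$ is at most the $d_\ell = 1$ contribution, which is $\le (1-q^{-1})|D(B,n+1)|$ up to the $o(r_n|D(B,n)|)$ error from the $d_\ell \ge 2$ blocks, and absorbing that error (using that $r_n$, equivalently $h_n/h_{n-1}$, is enormous by \eqref{E:hnselection}) gives $|D(B,n+1) \cap (k + D(B,n+1))| \le (1 - q^{-1})|D(B,n+1)|$, which is the claim (note $D(I,n+1)$ and $D(B,n+1)$ have the same cardinality as $B$ is a union of $C_{n-1}$-levels). The main obstacle I anticipate is the $d_\ell = 1$ case: one must argue carefully that the near-maximal block with $d_\ell = 1$ cannot produce more than a $(1 - q^{-1})$ fraction, which relies on the single non-trivially-translated copy being genuinely $(1-q^{-1})$-deficient — this is where the triangular-number estimate Lemma~\ref{L:tone} and the inductive hypothesis on $D(B,n)$ (that $k \not\in K$ when $k$ is small) must be invoked, and keeping track of which shifts $k$ actually fall outside $K$ at the previous stage is the delicate bookkeeping.
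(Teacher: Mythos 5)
Your overall strategy (bound each block $I_\ell$ via \eqref{E:I1restrict} and sum via \eqref{E:cupdecomp}) starts like the paper's argument, but it diverges at the two places where the real work happens, and both divergences leave gaps. First, for the blocks with $d_\ell = j_\ell - i_\ell \ge 2$ you bound the total contribution by $4q^2h_{n-1}^2\,|D(B,n)|$ and declare it $o\big(r_n|D(B,n)|\big)$ ``by \eqref{E:hnselection}''; but the construction only guarantees $r_n > 16qh_{n-1}$ (linear in $h_{n-1}$, coming from \eqref{E:distanceinequality}), not $r_n \gg h_{n-1}^2$, and since the heights grow far faster than the cut numbers this quadratic quantity is not negligible. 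To make this case work you would either have to control the number of blocks (for instance, $\beta_j-\beta_i$ moves by roughly $h_n$ per unit of $j-i$, so \eqref{E:crucialeqn} pins $d_\ell$ down to very few values), or do what the paper does: show that consecutive blocks are separated by runs of indices outside $S_n$ of length more than twice the adjacent block sizes, which yields $|S_n|<r_n/2$ outright.

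Second, and more seriously, the $d_\ell=1$ case is handled incorrectly. (Minor point: it is $c_i=0$, not $c_i\neq 0$, that occurs for at most one $i$, since $\beta_{i+1}-\beta_i$ is strictly increasing.) The substantive problem is that you need each intersection with $c_i\neq 0$ to be $(1-q^{-1})$-deficient, and that is false: as $i$ runs over the block, $c_i=\beta_{i+1}-\beta_i-k$ sweeps through essentially every integer in $(-2qh_{n-1},2qh_{n-1})$, and some of these values lie in $K$ (for example $c_i=2h_{n-1}=\gamma_{n-1}$ typically matches $q-1$ of the $q$ even-stage subcopies of $D(B,n)$, giving an intersection of size about $(1-q^{-1})|D(B,n)|$ or larger); since $k$ is arbitrary, you cannot ``arrange'' the $c_i$ to miss $K$. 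Fortunately none of this machinery is needed: the bound $|I_\ell|<4qh_{n-1}/d_\ell$ from \eqref{E:I1restrict} applies equally when $d_\ell=1$, giving $|I_\ell|<4qh_{n-1}<r_n/4$, so even if every copy indexed by that block intersected fully its contribution is at most $\tfrac14|D(B,n+1)|$. This is precisely the paper's route: it never estimates individual intersections and never invokes Lemma \ref{L:firstodd} or Lemma \ref{L:tone} here; it shows $|S_n|<r_n/2$ and then uses the crude bound of $|D(B,n)|$ per intersecting copy together with $\tfrac12\le 1-q^{-1}$, as in \eqref{E:qeqn}. Finally, your closing step (``$\le(1-q^{-1})|D(B,n+1)|$ up to an error, then absorb the error'') cannot deliver the stated inequality unless the main term sits strictly below $(1-q^{-1})|D(B,n+1)|$ by a margin you never establish.
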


\begin{proof}

The proof of the lemma is trivial in the case where $S_n = \emptyset$, so suppose first that $I_1$ is nonempty and $p = 1$. 
The bounds in (\ref{E:I1restrict}) imply that $|S_n| = |I_1| < \frac{4qh_{n-1}}{j_1 - i_1} \le 4q h_{n-1}$. But by equation (\ref{E:distanceinequality}), $r_n > 8 q \gamma_n = 16 q h_{n-1}$. Thus, $|S_n| < \frac{r_n}{2}$, so use of (\ref{E:cupdecomp}) implies:
\begin{align} 
\Big|D(B,n+1) \cap \big(k + D(B,n+1)\big)\Big| \le |S_n| \, |D(B,n)| < \frac{r_n}{2} \, |D(B,n)| \nonumber \\
\le \left( 1- q^{-1}\right) |D(B,n+1)|.\label{E:qeqn}
\end{align}

Now suppose that $p > 1$. By (\ref{E:I1restrict}), $|I_p| < \frac{4qh_{n-1}}{j_p - i_p}$. Also, for any $z \in \Z$ with $-i_p \le z < r_n - j_p$, we may write 
\[
\beta_{j_p + z} - \beta_{i_p + z} = \beta_{j_p} - \beta_{i_p} + (j_p- i_p)z.
\]

In the case where $z \le -\frac{4qh_{n-1}}{j_p - i_p}$, Whenever $j \le j_p$, (\ref{E:zeqn}) implies that 
\begin{align*}
\beta_{j + z}& - \beta_{i_p + z} - k \le \beta_{j_p + z} - \beta_{i_p + z} - k  \\
&\le \beta_{j_p} - \beta_{i_p} - (j_p - i_p) \, \frac{4qh_{n-1}}{j_p - i_p} - k < -2qh_{n-1}.
\end{align*}
Such $z$ and $j$ do not satisfy (\ref{E:crucialeqn}), so $k + D_{i_p+z}(B,n+1)$ has empty intersection with all $D(B,n)$-copies $D_{j+z}(B,n+1)$ with $j \le j_p$. In addition, when $z \ge -\frac{9r_n}{j_p - i_p}$ and $j \ge j_p + 1$, then we should have 
\begin{align*}
\beta_{j + z } - \beta_{i_p + z} - k \ge  \beta_{j_p} + h_n - \beta_{i_p} - k - (j_p - i_p) \, \frac{9r_n}{j_p - i_p} \\
> \beta_{j_p} - \beta_{i_p} - k + r_n > -2qh_{n-1} + 16qh_{n-1} > 2qh_{n-1},
\end{align*}
So when $ z \ge -\frac{9r_n}{j_p - i_p}$, $k + D_{i_p + z} (B,n+1)$ cannot intersect $D_{j+ z} (B,n+1) $ for indices $ j \ge j_p + 1$. We conclude that for all 
\[
i \in \left(i_p + \left[ -\frac{9r_n}{j_p - i_p}, \, -\frac{4qh_{n-1}}{j_p - i_p}\right] \cap \big\{0,\ldots,r_n - 1\big\}\right),
\]
$k + D_{i_p + z} (B,n+1)$ does not intersect with any $D(B,n)$-copy in $D(B,n+1)$.

We now show that $i_p - \frac{9r_n}{j_1 - i_1} > 0$. By supposition, $i_{p-1} \in S_n$ is a nonnegative integer less than $i_p$ such that $\phi_k(i_{p-1}) - i_{p-1} \ne \phi_k(i_p) - i_p$. Set $z = i_{p-1} - i_{p}$. Suppose for the sake of contradiction that $z \ge -\frac{9r_n}{j_p - i_p}$: then, as we have already deduced, $k + D_{i_{p-1}} (B,n+1) = k+ D_{i_p + z} (B,n+1)$ cannot intersect $D_{j + z} (B,n+1)$ whenever $j \ge j_p + 1$. We also find that for $j \le j_p - 1$,
\begin{align*}
\beta_{j + z} - \beta_{i_p + z} - k &\le \beta_{j_p + z} - h_n - \beta_{i_p + z} - k \\
&\le \beta_{j_p} - h_n - \beta_{i_p} - k - (j_p - i_p) (i_{p} - i_{p-1}) \\
&< \beta_{j_p} - \beta_{i_p} - k - h_n < 2qh_{n-1} - h_n < -8 qh_n.
\end{align*}
So $D_{i_{p-1}} (B,n+1)$ has empty intersection with all sets $D_{j + z} (B,n+1)$ with $j \le j_p - 1$. But this is impossible, because then we must have $\phi_k(i_{p-1}) = j_p + z = j_p + i_{p-1} - i_p$, implying that $\phi_k(i_{p-1}) - i_{p-1} = j_p - i_p$, which is a contradiction. So $i_{p-1} < i_p - \frac{9r_n}{j_p - i_p}$; because $i_{p-1} \ge 0$, we conclude that $i_p + \left[ -\frac{9r_n}{j_p - i_p}, \, -\frac{4qh_{n-1}}{j_p - i_p}\right] \subset [0,\ldots,r_n-1]$. Because $z = i_{p-1} - i_p < \frac{-9r_n}{j_p - i_p} < \frac{-4h_{n-1}}{j_p - i_p}$, it also must be true that $k + D_{i_{p-1}}(B,n+1)$ cannot intersect any $D(B,n)$-copy $D_{j + z} (B,n+1)$ with $ j \le j_p$; hence, $j_{p-1} = \phi_k(i_{p-1}) < j_p + i_{p-1} - i_p$, and $j_{p-1} - i_{p-1} < j_p - i_p$. 

By supposition, $p \ge 2$, so $I_{p-1}$ must be nonempty. From application of (\ref{E:I1restrict}), we have the strict upper bound $|I_{p-1}| < \frac{4qh_{n-1}}{j_{p-1} - i_{p-1}} < \frac{4qh_{n-1}}{j_{p} - i_{p}}$. On the other hand, the set $\left(i_p + \left[ -\frac{9r_n}{j_p - i_p}, \, -\frac{4qh_{n-1}}{j_p - i_p}\right]\right) \cap \Z$ lies strictly between $i_{p-1}$ and $i_p$, is contained in $\{0,\ldots,r_n-1\} \setminus S_n$, and has order at least, say, $\frac{7r_n}{j_p - i_p} > 10 \,\frac{4qh_{n-1}}{j_{p} - i_{p}} > 2\big( |I_{p-1}| + |I_p| \big)$. Similarly, if $i_{p-2} < i_{p-1}$ exists, there must exist a subset of $\{0,\ldots,r_n-1\} \setminus S_n$ lying strictly between $i_{p-2}$ and $i_{p-1}$ with order exceeding $2 \big( |I_{p-2} + |I_{p-1}| \big)$. So clearly, we must have $|S_n| < \frac{r_n}{2}$, from which (\ref{E:qeqn}) can be applied. 

\end{proof}

\begin{lemma} \label{L:even}
Let $n$ be even and $B$ a union of levels in $C_{n}$. Let $K \subset \N$ denote the set of positive integers $k$ such that 
\[
\Big|D(B,n) \cap \big(k + D(B,n)\big)\Big| > \left(1- q^{-1}\right) |D(B,n)|.
\]
Then if $k \not \in K$, $\Big|D(B,n+1) \cap \big(k + D(B,n+1)\big)\Big| \le \left(1- q^{-1}\right) |D(B,n+1)|$. 
\end{lemma}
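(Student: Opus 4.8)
The plan is to exploit that for $n$ even the height set $H_n=\{0,\gamma_n,\ldots,(q-1)\gamma_n\}$ is an arithmetic progression with common difference $\gamma_n=2h_n$, and that $D(B,n)\subseteq\{0,\ldots,h_n-1\}$ so that $\gamma_n$ exceeds twice the diameter of $D(B,n)$. First I would write $D(B,n+1)=\bigsqcup_{i=0}^{q-1}D_i(B,n+1)$ with $D_i(B,n+1)=i\gamma_n+D(B,n)$, and observe, exactly as in the discussion preceding Lemma~\ref{L:firstodd}, that these $q$ copies lie in intervals of length at most $h_n-1$ separated by gaps of length at least $h_n+1$. Consequently, for any nonzero integer $k$, a translated copy $k+D_j(B,n+1)$ can meet at most one copy $D_i(B,n+1)$, and a short computation (matching $i\gamma_n+a-k=(i-m)\gamma_n+a'$ with $a,a'\in D(B,n)$) shows that whenever there is an intersection, $i-j$ equals the unique integer $m\ge 0$ with $k':=k-m\gamma_n\in(-h_n,h_n)$; if no such $m$ exists the intersection is empty and there is nothing to prove.

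Next I would record the exact count. Since $D_i(B,n+1)\cap(k+D_{i-m}(B,n+1))$ is in a measure-preserving bijection with $D(B,n)\cap(k'+D(B,n))$, and the indices $i$ with both $i$ and $i-m$ in $\{0,\ldots,q-1\}$ number exactly $q-m$, we obtain
\[
\bigl|D(B,n+1)\cap(k+D(B,n+1))\bigr|=(q-m)\,\bigl|D(B,n)\cap(k'+D(B,n))\bigr|.
\]
The lemma then splits into two cases according to the size of $k$. If $k>0$ is large enough that $m\ge 1$, the conclusion is automatic and does not even use the hypothesis $k\notin K$: bounding $q-m\le q-1$ and $|D(B,n)\cap(k'+D(B,n))|\le|D(B,n)|$ gives a product at most $(q-1)|D(B,n)|=(1-q^{-1})|D(B,n+1)|$, using $|D(B,n+1)|=q|D(B,n)|$. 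If instead $m=0$, then $k'=k$ with $0<k<h_n$, the displayed identity reads $|D(B,n+1)\cap(k+D(B,n+1))|=q\,|D(B,n)\cap(k+D(B,n))|$, and since $k\notin K$ gives $|D(B,n)\cap(k+D(B,n))|\le(1-q^{-1})|D(B,n)|$, multiplying by $q$ yields exactly $(1-q^{-1})|D(B,n+1)|$.

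I do not expect a serious obstacle here; the only step requiring a little care is the \emph{at most one copy} claim together with pinning down $m$, i.e.\ checking that the spacing $\gamma_n=2h_n$ genuinely dominates the diameter of $D(B,n)$ from both sides, which is precisely where the choice $\gamma_n=2h_n$ (rather than something smaller) is used. Everything else reduces to the counting identity above and the two-line case analysis, and the argument is considerably simpler than Lemmas~\ref{L:secondodd} because the even-stage height sets are genuine arithmetic progressions rather than staircase sets.
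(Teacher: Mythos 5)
Your proposal is correct and follows essentially the same route as the paper: decompose $D(B,n+1)$ into the $q$ translated copies $i\gamma_n + D(B,n)$, use $\gamma_n = 2h_n > 2\max D(B,n)$ to see that each translated copy meets at most one copy, and then split into the small-shift case (only self-intersections, where $k \notin K$ is applied copy-by-copy) and the large-shift case (at least one copy contributes nothing, giving the factor $q-1$). Your exact counting identity $(q-m)\,\bigl|D(B,n)\cap\bigl(k'+D(B,n)\bigr)\bigr|$ is just a slightly sharper bookkeeping of the same two-case argument the paper gives.
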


\begin{proof}
By selection of the even height sets, 
\[
D(B,n+1) = \big\{D(B,n),\, \gamma_n + D(B,n),\ldots,\, (q-1)\gamma_n + D(B,n)\big\}.
\]
For $n$ even and $i \in \{0,\ldots,q-1\}$, let $D_i(B,n+1) = i(\gamma_n) + D(B,n)$. Since $\gamma_n = 2h_n > 2\max D(B,n)$, any translation of $D_i(B,n+1)$ by $k$ can intersect with at most one other copy $D_j(B,n+1),\, j \in \{0,\ldots,q-1\}$.

Fix an integer $k \ge 1$. If $k \le \max D(B,n)$, then $ k + D_i(B,n)$ can only intersect with $D_i(B,n)$ (the same subcopy) for $i = 0,\ldots,q-1$. If $k \not\in K$, each such intersection must have size at most $\left(1- q^{-1}\right) |D(B,n)|$; hence, $\Big|D(B,n+1) \cap \big( k + D(B,n+1)\big) \Big|  \le q \left(1- q^{-1}\right) |D(B,n)| = \left(1- q^{-1}\right)|D(B,n+1)|$.

On the other hand, if $k > \max D(B,n)$, then the bottom level of $ k + D_{q-1} (B,n+1)$ is sent above the highest level of $D(B,n+1)$ (namely, the top level of $D_{q-1} (B,n+1)$). Thus, at least one of the translated $D(B,n)$-copies in $D(B,n+1)$ has an empty intersection with all other such copies, and as such,
\begin{align*}
\Big|D(B,n+1) \cap \big( k + D(B,n+1)\big) \Big| & \le (q-1)|D(B,n)| \\
&= \left( 1- q^{-1}\right) |D(B,n+1)|.
\end{align*}
\end{proof}

\begin{lemma}  \label{L:nonrigid}
Let $B$ be any union of levels drawn from $C_m,\, m \in \N$. Then the set of integers 
\[K = \left\{k \in \N: \, \mu\left( B \cap T_q^k (B)\right)> \left( 1- q^{-1}\right) \mu(B)\right\}\]
is finite.
\end{lemma}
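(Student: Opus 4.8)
The plan is to show that for any fixed union of levels $B$ drawn from some column $C_m$, the quantity $\mu(B\cap T_q^k B)$ cannot exceed $(1-q^{-1})\mu(B)$ for all large $k$, by tracking the descendant sets $D(B,n)$ through the stages of the construction and invoking the three preceding lemmas. The key observation is that $\mu(B\cap T_q^k B)/\mu(B) = |D(B,n)\cap(k+D(B,n))|/|D(B,n)|$ once $n$ is large enough that $B$ is expressed as a union of levels of $C_n$ and $k<h_n$ (so that translation by $k$ stays within the column in the relevant sense); this reduces everything to a purely combinatorial statement about the integer sets $D(B,n)$. So I would first fix $B$ and, given any candidate $k$, choose $n$ minimal with $k\le 2qh_{n-1}$ (for $n$ odd) or the analogous threshold, and observe that membership in the ``bad'' set $K$ at stage $n$ is what would be needed to get $\mu(B\cap T_q^k B)>(1-q^{-1})\mu(B)$.

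Next I would run the induction downward on the column index. Lemmas \ref{L:firstodd}, \ref{L:secondodd}, and \ref{L:even} together say: if at some stage $n$ the translate $k+D(B,n)$ fails to cover more than a $(1-q^{-1})$ fraction of $D(B,n)$, then the same holds at stage $n+1$ — and moreover Lemmas \ref{L:secondodd} and the large-$k$ branch of Lemma \ref{L:even} give this \emph{unconditionally} once $k$ is large relative to the diameter of a single copy of $D(B,n)$ in $D(B,n+1)$, because then each translated copy can meet at most one other copy and a positive proportion of copies get pushed off entirely. So for each $k$, pick the smallest $n_0=n_0(k)$ such that $k$ exceeds the relevant ``diameter'' threshold at stage $n_0$ (this $n_0\to\infty$ as $k\to\infty$, but is finite for each fixed $k$); at stage $n_0$ one of the unconditional estimates applies, giving $|D(B,n_0+1)\cap(k+D(B,n_0+1))|\le(1-q^{-1})|D(B,n_0+1)|$, and then Lemmas \ref{L:firstodd}/\ref{L:even}, applied for the (now smaller, hence ``too small to bridge gaps'') values of $k$ at all later stages, propagate this bound to every stage $N\ge n_0+1$. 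Passing to the limit gives $\mu(B\cap T_q^k B)\le(1-q^{-1})\mu(B)$ for every $k$ with $n_0(k)$ past a suitable point.

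It then remains to handle the finitely many $k$ with $n_0(k)$ below that point — but these are only finitely many integers $k$ (bounded by the diameter threshold at that fixed stage), so at most finitely many $k$ can lie in $K$, which is exactly the claim. A clean way to organize this is: let $N_0$ be such that $B$ is a union of levels of $C_{N_0}$ with $N_0$ even; for $k>$ (diameter of $D(B,N_0+1)$), Lemma \ref{L:even} (large-$k$ branch) starts the unconditional bound, and then alternately Lemmas \ref{L:firstodd} and \ref{L:even} (small-$k$ branches, valid since $k<2qh_{N_0+j}$ eventually fails but by then $k<h_{N_0+j}$, so the copies are too far apart) carry it forward forever. Hence $K\subset\{1,\ldots,\operatorname{diam}D(B,N_0+1)\}$, a finite set.

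The main obstacle I anticipate is bookkeeping the interplay between ``$k$ small relative to the current stage'' (needed for the conditional Lemmas \ref{L:firstodd} and \ref{L:even}) and ``$k$ large relative to the current stage'' (needed to launch the unconditional estimates of Lemmas \ref{L:secondodd} and the large-$k$ case of Lemma \ref{L:even}): for a fixed $k$, as $n$ grows, $k$ transitions from the ``large'' regime at some stage $n_0(k)$ to the ``small'' regime at all later stages, and one must verify the thresholds ($2qh_{n-1}$, $\max D(B,n)$, the gap sizes $\beta_i-\beta_{i-1}>10qh_{n-1}$) are consistent so that exactly one of the lemmas applies at each stage with no gap. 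Because the growth conditions \eqref{E:hnselection} were chosen precisely to make these thresholds nest correctly, this should go through, but it requires care to state the induction hypothesis at the right level of generality (for an arbitrary union of levels $B$, not just the base $I$).
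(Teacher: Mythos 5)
Your proposal is correct and follows essentially the same route as the paper's proof: reduce to the combinatorics of the descendant sets, use Lemmas \ref{L:even}, \ref{L:firstodd}, and \ref{L:secondodd} inductively to show that for every stage $n\ge m$ the stage-$n$ ``bad'' set of $k$'s stays inside the finite set of $k\le\max D(B,m)$ determined at the initial even stage, and then transfer to the measure statement by choosing $n$ so large that $T_q^k B$ is still a union of levels of $C_n$. The only imprecision is the within-column condition, which should be $k+\max D(B,n)<h_n$ (guaranteed because spacers are always added on the rightmost subcolumn) rather than just $k<h_n$, exactly as in the paper's final step.
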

\begin{proof}
Without loss of generality we can assume that $m$ is even, considering the descendants of $B$ in a subsequent column, if necessary. Let $D(B,m)$ be the set of heights of the levels comprising $B$ in $C_m$, and let $K^*$ be the set of positive integers $k$ allowing $\Big|D(B,m) \cap \big(k + D(B,m)\big)\Big| > \left(1- q^{-1}\right) |D(B,m)|$. Clearly, $K^*$ is upper bounded by $\max D(B,m)$, whence it is finite. By Lemma \ref{L:even}, the only positive integers $k$ allowing $\Big|D(B,m+1) \cap \big(k + D(B,m+1)\big)\Big| > \left(1- q^{-1}\right) |D(B,m+1)|$ belong to $K^*$. By Lemma \ref{L:secondodd}, the only integers $k$ allowing $\Big|D(B,m+2) \cap \big(k + D(B,m+2)\big)\Big| > \left(1- q^{-1}\right) |D(B,m+2)|$ are those less than $2qh_{m}$, and by Lemma \ref{L:firstodd}, these integers all must belong to $K^*$. By inductively applying Lemmas \ref{L:even}, \ref{L:firstodd}, and \ref{L:secondodd}, we find that for any $n \in \N,\, n \ge m$, $\Big|D(B,n) \cap \big(k + D(B,n)\big)\Big| > \left(1- q^{-1}\right) |D(B,n)|$ if and only if $k \in K^*$, which is a finite set.

Now fix any integer $k \ge 1$ with $k \not\in K^*$. Because $T_q$ always adds spacers on its rightmost subcolumns, we can find an $n \in \N$ such that $h_n >\max D(B,n) + k$. Letting $I_n$ be the base of $C_n$, this implies that $T^j I_n$ is defined as a level in $C_n$ for every $j \in k + D(B,n)$. Thus,
\[T^k B = T^k \left(\bigsqcup_{d \in D(B,n)} T^d I_n\right) = \bigsqcup_{d \in k + D(B,n)} T^d I_n.\]
But $B = \bigsqcup_{d \in D(B,n)} T^d I_n$, and 
\[
\big| k + D(B,n) \cap D(B,n)\big| \le \left(1- q^{-1} \right) \big| D(B,n)\big|.
\]
Because all of the levels of $C_n$ are pairwise disjoint, it follows that $\mu(B \cap T^k B) \le \left( 1- q^{-1}\right) \mu(B)$. So the statement of the lemma holds with $K = K^*$. 
\end{proof}

The following is proved with the same argument as Lemma \ref{L:tqnotergodic}:

\begin{lemma}\label{L:tqqnotergodic}
For any $q \ge 2$, $T_q \times T_q$ is not ergodic.
\end{lemma}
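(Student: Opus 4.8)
The plan is to mirror the argument of Lemma~\ref{L:tqnotergodic}, substituting the structural facts about $T_q$ that have already been established in this section for the ad hoc computations used there. The goal is to show, via Lemma~\ref{L:kproduct}, that for every $n$ a definite fraction (bounded away from $0$, uniformly in $n$) of the pairs $(a,a') \in D(I,n)^2$ admits no complementary pair $(d,d')$ with $a - a' = d - d' + 1$; this suffices to rule out ergodicity of $T_q \times T_q$.

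First I would set up the sum decomposition $a = \sum_{i=0}^{n-1} a_i$, $a' = \sum_{i=0}^{n-1} a_i'$ with $a_i, a_i' \in H_i$, exactly as in Lemma~\ref{L:tqnotergodic}. For each odd index $i$, let $K_i \subset H_i^2$ be the set of pairs $(\beta_s,\beta_t)$ with $|s - t| > 2\max D(I,i) + 1$; by the choice of the cutting parameter $r_i$ (satisfying inequality \eqref{E:distanceinequality}) we have $|K_i|/|H_i|^2 > 1 - \tfrac{1}{4i^2}$. Define $F_n \subset D(I,n)^2$ to consist of those $(a,a')$ for which $(a_i,a_i') \in K_i$ for every odd $i \in \{1,\dots,n-1\}$. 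Taking the product over the odd indices and using $\prod_i (1 - \tfrac{1}{4(2i+1)^2}) > \tfrac12$ gives $|F_n|/|D(I,n)|^2 > \tfrac12$ for all $n$.

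Next, fix $(a,a') \in F_n$ and any $(d,d') \in D(I,n)^2$ with $a - a' \neq d - d'$; I must show $a - a' \neq d - d' + 1$. Let $k$ be the largest index with $a_k - a_k' \neq d_k - d_k'$. If $k$ is odd, then since $(a_k, a_k') \in K_k$ the argument of Lemma~\ref{L:tqnotergodic} (both cases of Lemma~\ref{L:tone}, i.e. whether or not the ``slopes'' match) yields $|a_k - a_k' - d_k + d_k'| > 2\max D(I,k) + 1$. If $k$ is even, then because the even height set of $T_q$ is the arithmetic progression $\{0,\gamma_k,\dots,(q-1)\gamma_k\}$ with $\gamma_k = 2h_k > 2\max D(I,k) + 1$, the quantities $a_k - a_k'$ and $d_k - d_k'$ are distinct multiples of $\gamma_k$ and hence differ by at least $\gamma_k > 2\max D(I,k) + 1$. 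In either case the contribution from indices $< k$ has absolute value at most $2 \max D(I,k)$, so $|a - a' - d + d'| > 1$, and in particular $a - a' \neq d - d' + 1$. Thus over half of the pairs in $D(I,n)^2$ have no complementary pair witnessing the shift by $1$ required in Lemma~\ref{L:kproduct} (with $b = 1$, $h_i$ large enough that $1 \le h_i - 1$), so $T_q \times T_q$ is not ergodic.

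The only point that needs slightly more care than in Lemma~\ref{L:tqnotergodic} is the even case: there the even height sets were the two-point sets $\{0,g_n\}$ with $g_n$ huge, whereas here they are the longer arithmetic progressions $\{0,\gamma_n,\dots,(q-1)\gamma_n\}$. But since all elements are multiples of $\gamma_n$ and $\gamma_n$ itself already exceeds $2\max D(I,n)+1$, two distinct entries still differ by more than $2\max D(I,n)+1$, so the estimate goes through verbatim. Everything else is identical to the proof of Lemma~\ref{L:tqnotergodic}, which is why the statement can be asserted ``with the same argument.''
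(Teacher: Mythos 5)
Your proposal is correct and is exactly the argument the paper intends: the paper proves Lemma~\ref{L:tqqnotergodic} simply by invoking the same argument as Lemma~\ref{L:tqnotergodic}, and your adaptation of the even-index case (distinct elements of $\{0,\gamma_n,\dots,(q-1)\gamma_n\}$ differ by at least $\gamma_n=2h_n>2\max D(I,n)+1$) is precisely the point that makes that transfer work. No gaps.
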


Thus, we have the following extension of Theorem \ref{T:dergodic}:

\begin{theorem} For any $q\in \N$, $q\ge 2$, there exists a $\left( 1 - q^{-1} \right)$-type transformation $T_q$ such that, for any finite measure set $A$, we  have 
\[
\limsup_{n \rightarrow \infty} \mu\left( A \cap T_q^n(A)\right) = \left( 1- q^{-1}\right) \mu(A) < \mu(A).
\]
Furthermore, $T_q$ is weak doubly ergodic but $T_q\times T_q$ is not ergodic. \label{T:dergodic3}
\end{theorem}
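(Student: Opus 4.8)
The plan is to verify each of the three claimed properties of $T_q$ — being $(1-q^{-1})$-type with the displayed $\limsup$ formula, weak double ergodicity, and non-ergodicity of $T_q \times T_q$ — drawing on the lemmas already assembled in this section. First I would establish the $\limsup$ formula. The upper bound $\limsup_{n\to\infty}\mu(A\cap T_q^n(A)) \le (1-q^{-1})\mu(A)$ follows from Lemma \ref{L:nonrigid} by a standard approximation argument: given a finite measure set $A$ and $\ve > 0$, approximate $A$ in measure by a finite union $B$ of levels in some column $C_m$ with $\mu(A\,\triangle\, B)$ small; Lemma \ref{L:nonrigid} gives a finite exceptional set $K$ so that for all $k \notin K$ one has $\mu(B\cap T_q^k(B)) \le (1-q^{-1})\mu(B)$, and then $\mu(A\cap T_q^k(A))$ differs from $\mu(B\cap T_q^k(B))$ by at most $2\mu(A\,\triangle\, B)$. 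Letting $k\to\infty$ (so we eventually leave the finite set $K$) and then $\ve\to 0$ gives the upper bound. For the matching lower bound $\limsup \ge (1-q^{-1})\mu(A)$, I would use Proposition \ref{P:partialrigid} applied to the even height sets $H_{2n} = \{0,\gamma_{2n},2\gamma_{2n},\ldots,(q-1)\gamma_{2n}\}$: these are arithmetic progressions of length $q$ with common difference $\gamma_{2n}$, so taking $a_{2n} = \gamma_{2n}$ gives $|(a_{2n}+H_{2n})\cap H_{2n}| = (q-1) = (1-q^{-1})|H_{2n}|$ exactly, hence $T_q$ is at least $(1-q^{-1})$-partially rigid along the sequence $(\gamma_{2n})$, which yields $\limsup_{n\to\infty}\mu(A\cap T_q^n(A)) \ge (1-q^{-1})\mu(A)$ for every finite measure set $A$. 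Combining the two bounds gives equality, and since $1-q^{-1} < 1$ the strict inequality $(1-q^{-1})\mu(A) < \mu(A)$ is immediate for $\mu(A) > 0$; this also shows $T_q$ is of $(1-q^{-1})$-type in the sense of \cite{HaOs71}.

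Next I would check weak double ergodicity. The odd height sets of $T_q$ have precisely the staircase form $H_{n} = \{0,\,h_n+1,\,2h_n+3,\ldots,(r_n-1)h_n+\sum_{i=1}^{r_n-1}i\}$ with $r_n$ unbounded (by \eqref{E:distanceinequality}) and, since here every odd stage is a full staircase stage, the ratio $s_n/r_n$ is bounded below (indeed $s_n = r_n-1$ so the ratio tends to $1 > \tau$ for any $\tau < 1$). Hence $T_q$ is an arithmetic rank-one transformation in the sense defined before Proposition \ref{P:hardstairs} — the subsequence $(n_i)$ of odd indices supplies the required subsets $S_{n_i} = H_{n_i}$ with $a_{n_i} = 0$, $k_{n_i} = 1$ — and therefore $T_q$ is weak doubly ergodic by Proposition \ref{P:hardstairs}.

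Finally, the non-ergodicity of $T_q \times T_q$ is exactly the content of Lemma \ref{L:tqqnotergodic}, which is asserted in the excerpt to follow by the same argument as Lemma \ref{L:tqnotergodic}: the even stages contribute, for each odd $n$, a large subset $K_n \subset H_n^2$ of pairs $(\beta_i,\beta_j)$ with $|i-j|$ exceeding $2\max D(I,n)+1$, the triangular-number estimate of Lemma \ref{L:tone} shows no such pair admits a complementary pair $(d,d')$ with $\beta_i - \beta_j = d - d' + 1$, the product $\prod(1 - \frac{1}{4(2i+1)^2}) > \frac12$ keeps the good set of density above one half across all stages, and Lemma \ref{L:kproduct} (which characterizes conservative ergodicity of the cartesian square via exactly this complementary-pair condition with $b = 1$) then forces $T_q\times T_q$ to fail to be ergodic. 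Assembling these three parts completes the proof. I expect the main obstacle to be purely bookkeeping rather than conceptual: one must check carefully that the choices of $r_{n+1}$ via \eqref{E:distanceinequality} and of $h_{n+1}$ via \eqref{E:hnselection} are simultaneously compatible with (i) the density estimates needed for Lemma \ref{L:tqqnotergodic}, (ii) the staircase/arithmetic structure needed for Proposition \ref{P:hardstairs}, and (iii) the separation-of-copies hypotheses ($\beta_i - \beta_{i-1} > h_n > 10qh_{n-1}$, etc.) invoked in Lemmas \ref{L:firstodd}, \ref{L:secondodd}, and \ref{L:even}; verifying that a single construction meets all these demands at once is where the care lies, but each individual requirement is already isolated in the preceding lemmas.
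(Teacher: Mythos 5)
Your proposal follows essentially the same route as the paper: the lower bound on the $\limsup$ via Proposition \ref{P:partialrigid} applied to the even height sets, the matching upper bound via Lemma \ref{L:nonrigid} plus approximation of $A$ by a union of levels (the paper phrases this as a contradiction argument, yours is direct, but the content is identical), weak double ergodicity via Proposition \ref{P:hardstairs} using the odd staircase stages, and non-ergodicity of $T_q\times T_q$ via Lemma \ref{L:tqqnotergodic}. Apart from small wording slips (the sets $K_n$ come from the odd, not even, stages, and the staircase stages match the arithmetic form with $k_{n_i}=0$ rather than $1$), the argument is correct and matches the paper's proof.
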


\begin{proof}
$T_q$ is weak doubly ergodic by Proposition \ref{P:hardstairs} and at least $\left( 1- q^{-1}\right)$-partially rigid by application of Proposition \ref{P:partialrigid} to the even height sets $H_{2n},\, n \in \N$. In addition, $T_q \times T_q$ is not ergodic by Lemma \ref{L:tqqnotergodic}. To show the second claim, suppose for the sake of contradiction that for some $\ve>0$, there exists a finite positive measure set $A$ such that $\limsup_{n\rightarrow \infty} \mu( A\cap T^n (A)) \ge \left( 1- q^{-1} + \ve \right) \mu(A)$. We may approximate $A$ with a set $B$ constructed as a union of levels in some column of $T_q$ such that $\mu(A \triangle B) < \frac{\ve}{8} \, \mu(B)$. It then follows that $\mu(A) \ge \left( 1- \frac{\ve}{8}\right) \, \mu(B)$. By Lemma \ref{L:nonrigid}, there must exist some $N \in \N$ such that for all $n \ge N$, $\mu(B \cap T^n(B)) \le \left( 1- q^{-1} \right) \mu(B)$. By supposition, there must exist some $m \ge N$ such that $\mu(A \cap T^m(A)) > \left( 1- q^{-1} + \frac{\ve}{2} \right) \mu(A)$. But then 
\begin{align*}
\mu(B \cap T^m (B)) \ge \mu(A \cap T^n (A)) - 2\mu(A \setminus B) &> \left( 1- q^{-1} + \frac{\ve}{2} \right) \mu(A) - \frac{\ve}{4}\, \mu(B) \\
&> \left( 1- q^{-1} + \frac{\ve}{8} \right) \mu(B),
\end{align*}  
which is a contradiction. So in fact 
\[
\limsup_{n\rightarrow \infty} \mu( A\cap T^n (A)) = \left( 1- q^{-1} \right) \mu(A)
\]
for every finite measure set $A$. 
\end{proof}

One question that arises from Theorem \ref{T:dergodic} is whether $\alpha$-type transformations with $\alpha < 1$ are weak doubly ergodic. We answer in the negative, with the following lemma.

\begin{lemma}
For all $q \in \N$ that are at least $2$, there exists an infinite measure preserving $\left(1-q^{-1}\right)$-type transformation $T$ on $X$ which is not EIC, hence not weak doubly ergodic.
\end{lemma}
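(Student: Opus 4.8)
The plan is to recycle the $(1-q^{-1})$-type transformation $T_q$ of Theorem~\ref{T:dergodic3}, re-running its cutting-and-stacking construction with every height-set element made even, so that Lemma~\ref{L:notEIC} (applied with $k=2$) forces a rotation on $\Z_2$ as a factor and thereby destroys weak mixing, hence EIC, hence weak double ergodicity. Note that the even-indexed height sets of $T_q$, namely $H_{2n}=\{0,\gamma_{2n},2\gamma_{2n},\dots,(q-1)\gamma_{2n}\}$ with $\gamma_{2n}=2h_{2n}$, already consist entirely of multiples of $2$; only the odd-indexed triangular staircase sets need changing. First I would replace each odd-stage height set by the ``step-$2$ staircase''
\[
H_n=\{\beta_0,\dots,\beta_{r_n-1}\},\qquad \beta_i=2ih_n+i(i+1),
\]
every element of which is even (as $i(i+1)$ is always even) and whose successive gaps $\beta_{i+1}-\beta_i=2h_n+2i+2$ are strictly increasing. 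I would also take $H_0=\{0,2,\dots,2(q-1)\}$ so that the base stage too has only even heights, and otherwise keep all choices of spacers and cuts exactly as in Theorem~\ref{T:dergodic3} --- in particular so that the analogues of \eqref{E:hnselection} and \eqref{E:distanceinequality}, and the inequality $h_{n+1}>10qh_n$, continue to hold.

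Granting this construction, the failure of EIC is immediate: since every element of every $H_n$ is divisible by $k=2$, Lemma~\ref{L:notEIC} shows $T$ has a rotation on $\Z_2$ as a factor, so $T$ is not weak mixing, hence not EIC, hence not weak doubly ergodic. (In contrast to $T_q$, we make no use of Proposition~\ref{P:hardstairs} here, since the whole point is that $T$ \emph{fails} weak double ergodicity.)

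It remains to see that $T$ is still $(1-q^{-1})$-type, and for this I would follow the proof of Theorem~\ref{T:dergodic3} essentially verbatim. For the lower bound, Proposition~\ref{P:partialrigid} applied to the even-indexed height sets shows $T$ is at least $(1-q^{-1})$-partially rigid: each $H_{2n}$ ($n\ge 1$) is an arithmetic progression of length $q$ and common difference $\gamma_{2n}$, so $|(\gamma_{2n}+H_{2n})\cap H_{2n}|=(q-1)|H_{2n}|/q$, whence $\limsup_{n\to\infty}\mu(A\cap T^nA)\ge(1-q^{-1})\mu(A)$ for every finite measure set $A$. For the upper bound I would check that Lemmas~\ref{L:even}, \ref{L:firstodd}, \ref{L:secondodd} --- and therefore Lemma~\ref{L:nonrigid} --- survive the modification: Lemma~\ref{L:even} concerns only the unchanged even stages, while the proofs of Lemmas~\ref{L:firstodd} and \ref{L:secondodd} use the triangular staircase solely through the identity $\beta_{j+z}-\beta_{i+z}=(\beta_j-\beta_i)+(j-i)z$, whose step-$2$ counterpart $\beta_{j+z}-\beta_{i+z}=(\beta_j-\beta_i)+2(j-i)z$ holds and drives the same cardinality estimates (the interval-length bounds such as $|I_\ell|<4qh_{n-1}/(j_\ell-i_\ell)$ acquire only a harmless factor of $2$). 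With Lemma~\ref{L:nonrigid} in hand, the approximation argument of Theorem~\ref{T:dergodic3} yields $\limsup_{n\to\infty}\mu(A\cap T^nA)\le(1-q^{-1})\mu(A)$, so $T$ is $(1-q^{-1})$-type.

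The only point requiring genuine verification rather than bookkeeping is confirming that replacing the triangular staircase by the step-$2$ staircase does not break Lemmas~\ref{L:firstodd} and \ref{L:secondodd}. The substantive mechanism there is that the strictly increasing gaps of a staircase make the matching index map $\phi_k$ inject each ``offset class'' $\{i:\phi_k(i)-i=\text{const}\}$ into a short interval, which forces $|S_n|<r_n/2$; since the step-$2$ staircase still has strictly increasing gaps and the $r_n$ chosen via \eqref{E:distanceinequality} is comfortably large relative to $qh_{n-1}$, I expect the argument to carry over with only the cosmetic factor-of-$2$ changes indicated above.
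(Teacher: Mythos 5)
Your proposal is correct, but it takes a genuinely different and heavier route than the paper. The paper does not modify $T_q$ at all: it defines a fresh transformation with the arithmetic-progression height sets $H_n=\{0,2h_n,\ldots,2(q-1)h_n\}$ at \emph{every} stage (with at least one spacer on the last subcolumn), gets the lower bound from Proposition~\ref{P:partialrigid}, gets the upper bound by the method of Lemma~\ref{L:even} alone --- since every stage now has the ``even-stage'' form, adjacent $D(B,n)$-copies are separated by $2h_n>2\max D(B,n)$ and the simple one-copy-intersects-at-most-one-copy count suffices --- and concludes non-EIC from Lemma~\ref{L:notEIC}. You instead keep the hybrid structure of $T_q$, replacing the odd-stage triangular staircases by a step-$2$ staircase, which forces you to re-verify the delicate odd-stage Lemmas~\ref{L:firstodd} and~\ref{L:secondodd}; your verification is sound (the identity $\beta_{j+z}-\beta_{i+z}=\beta_j-\beta_i+2(j-i)z$ does hold, the gaps $\beta_{i+1}-\beta_i=2h_n+2i+2\ge h_n$ keep the height sets legitimate, and the offset-class bounds only tighten), but the extra work buys nothing here: since the target statement is precisely that $T$ fails weak double ergodicity, there is no reason to preserve the staircase stages whose only purpose in the appendix was to invoke Proposition~\ref{P:hardstairs}. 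The paper's uniform arithmetic-progression construction reaches the same conclusion with only the easy Lemma~\ref{L:even}-style estimate, while your construction shows, somewhat more informatively, that the failure of EIC can be arranged by a parity modification of $T_q$ itself without disturbing its $(1-q^{-1})$-type behavior.
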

\begin{proof}
Construct $T$ with $
H_n = \{0,2h_n,\ldots,2(q-1)h_n\},
$ for all $n$. 
Always place at least one spacer on the rightmost subcolumn of $C_n$. By Proposition \ref{P:partialrigid}, it is clear that $T$ is at least $\left( 1- q^{-1}\right)$-partially rigid. By the method of Lemma \ref{L:even}, it is straightforward to show that there is a finite set $K \subset \Z$ such that if $k \not\in K$, $\big|D(B,n) \cap (k + D(B,n))\big| \le \left( 1- q^{-1} \right)\, |D(B,n)|$ for any $B$ a collection of levels from $C_i$ and $n \ge i$. Thus, by the argument given in Theorem \ref{T:dergodic}, $T$ is of $\left( 1- q^{-1} \right)$-type. Finally, $T$ is not EIC by Lemma \ref{L:notEIC}. 
\end{proof}
\bibliographystyle{plain}
\bibliography{ErgBib_Master}
\end{document}